\title{$p$-adic integration on bad reduction hyperelliptic curves}
\author{Eric Katz}
\address{Eric Katz, Department of Mathematics, The Ohio State University, 231 W. 18th Ave., Columbus, OH 43210, USA}
\email{katz.60@osu.edu}
\author{Enis Kaya}
\address{Enis Kaya, Bernoulli Institute, University of Groningen, Nijenborgh 9, 9747 AG
Groningen, The Netherlands}
\email{e.kaya@rug.nl}
\newcommand{\Ab}{{\operatorname{Ab}}}
\newcommand{\lAb}{{\operatorname{Ab\ }}}
\newcommand{\BC}{{\operatorname{BC}}}
\newcommand{\lBC}{{\operatorname{BC\ }}}
\newcommand{\dR}{{\operatorname{dR}}}
\def\presuper#1#2%
\newcommand{\defi}[1]{\textbf{#1}} 	% for defined terms
\newcommand\Abint{\presuper\Ab\int}
\newcommand\lAbint{\presuper\lAb\int}
\newcommand\BCint{\presuper\BC\int}
\newcommand\lBCint{\presuper\lBC\int}
\newcommand\tint{\presuper{t}\int}
\DeclareMathOperator{\PP}{PP}
\DeclareMathOperator{\Log}{Log}
\DeclareMathOperator{\Symb}{Symb}
\DeclareMathOperator{\iinv}{inv}
\begin{document}

\begin{abstract} In this paper, we introduce an algorithm for computing $p$-adic integrals on bad reduction hyperelliptic curves. For bad reduction curves, there are two notions of $p$-adic integration: Berkovich--Coleman integrals which can be performed locally; and abelian integrals with desirable number-theoretic properties. By covering a bad reduction hyperelliptic curve by basic wide open sets, we reduce the computation of Berkovich--Coleman integrals to the known algorithms on good reduction hyperelliptic curves. These are due to Balakrishnan, Bradshaw, and Kedlaya, and to Balakrishnan and Besser for regular and meromorphic $1$-forms, respectively. We then employ tropical geometric techniques due to the first-named author with Rabinoff and Zureick-Brown to convert the Berkovich--Coleman integrals into abelian integrals. We provide examples of our algorithm, verifying that certain abelian integrals between torsion points vanish.
\end{abstract}

\maketitle

\setcounter{tocdepth}{1}
\tableofcontents

\section{Introduction}

The theory of $p$-adic line integrals on analytic curves was introduced by Coleman \cite{coleman85:effective_chabauty,
coleman85:torsion} to solve problems in number theory. In certain circumstances, it produces locally analytic functions that vanish on rational and torsion points on an algebraic curve $X$. These functions were originally defined by composing an Abel-Jacobi map $\iota$ on the analytification of $X$ with the $p$-adic logarithm on the Jacobian $J$ of $X$:
\[\xymatrix{X(\C_p)\ar[r]^{\iota}&J(\C_p)\ar[r]^{\Log}&\Lie(J).}\]
Because the $\Lie(J)$ is torsion-free, the torsion points of $J(\C_p)$ are necessarily taken to $0$ on $\Lie(J)$. In the Chabauty--Coleman method (see \cite{mccallum2012method}), under  conditions on the rank of $J(\Q)$, the image of $J(\Q)$ lies in a proper linear subspace of $\Lie(J)$. In either case, one hopes to find a linear function on $\Lie(J)$ whose pullback to the analytification $X^{\an}$ vanishes on the points of interest. It was Coleman's key insight that for good reduction curves, the function can be computed locally on $X^{\an}$ as an integral $f_\omega=\int \omega$ for $\omega\in \Omega^1(X)=\Lie(J)^\vee$. To define such an integral, one integrates $\omega$ on each residue disc and uses the Dwork principle, {\em analytic continuation by Frobenius}, to match the integrals between residue discs. Specifically, on an open affinoid subset $U\subset X^{\an}$, one can find a lift of Frobenius $\phi\colon U\to U$. Then, $f_\omega$ is determined up to a global constant by its being a local primitive of $\omega$, i.e., $df_\omega=\omega$ and by its obeying a change-of-variables formula with respect to $\phi$:
\[\int_S^R \phi^*\omega=\int_{\phi(S)}^{\phi(R)} \omega.\]
The function $f_\omega$ is independent of the choice of $\phi$. There are practical algorithms to compute $f_\omega$ when $p$ is odd, and they can be summarized as follows. 

\begin{itemize}
    \item Algorithms on odd degree hyperelliptic curves were developed in Balakrishnan--Bradshaw--Kedlaya \cite{BBKExplicit}. By drawing on Kedlaya's algorithm for computing the zeta function of hyperelliptic curves \cite{kedlaya2001counting}, one is able to choose an explicit lift of Frobenius and write down its action on Monsky--Washnitzer cohomology, a form of de Rham cohomology on affinoid spaces. One considers $H_{\dR}^1(U)^{-}$, the odd subspace of $H_{\dR}^1(U)$, that is, the $(-1)$-eigenspace of the hyperelliptic involution. Given a basis $\omega_1,\dots,\omega_k$ of $H^1_{\dR}(U)^{-}$, one writes $\phi^*\omega_i=df_i + \sum_{j} M_{ij}\omega_j$ for constants $M_{ij}$ and meromorphic functions $f_i$. From knowledge of $M_{ij}$, $f_i$, and the action of $\phi$ on points $R$ and $S$, one is able to solve for $\int_S^R \omega_i$. We also note that Best \cite{best2019explicit} has improved the complexity of the integration algorithms introduced in \cite{BBKExplicit}. 
    \item The paper \cite{BBKExplicit} came with certain restrictions. One is only able to integrate meromorphic $1$-forms whose poles are in residue discs around Weierstrass points. However, for some applications (e.g., computation of $p$-adic heights on curves as in \cite{coleman1989}), it is necessary to perform more general integrals. Such an algorithm was provided in Balakrishnan--Besser \cite{BBComputing} based on the theory of local symbols \cite{Besser2000}.
    \item Another restriction in \cite{BBKExplicit} is that the authors only deal with odd degree models; this restriction is inherited from \cite{kedlaya2001counting} where it is used out of convenience. Building on \cite{harrison2012extension}, in which Harrison adapted Kedlaya's algorithm to even degree case, Balakrishnan \cite{balakrishnan2015coleman} extended the integration algorithms in \cite{BBKExplicit} to even degree models of hyperelliptic curves.
    \item All the methods above only deal with hyperelliptic curves. Algorithms to carry out integration on more general curves were developed in Balakrishnan--Tuitman \cite{balakrishnan2020explicit} based on the work of Tuitman \cite{Tuitman:counting1, Tuitman:counting2} that generalizes Kedlaya's algorithm to this setting. We note that these algorithms work only for meromorphic $1$-forms that are holomorphic away from the ramification locus. 
\end{itemize}

The theory of $p$-adic integration is also useful in the bad reduction case as was demonstrated in the work of Stoll \cite{stoll2018uniform} and  of the first-named author with Rabinoff and Zureick-Brown \cite{krzb15:uniform_bounds}. Here, to bound the number of rational or torsion points independent of the geometry of the  curve, one is forced to work with primes of bad reduction. In addition, there are some curves $X/\Q$ for which the upper bound on $X(\Q)$ coming from Chabauty--Coleman method is achieved at primes of bad reduction (see, for example, \cite[Example~5.1]{katz_dzb13:chabauty_coleman}). Unfortunately, there are two different notions of $p$-adic integration: the abelian integral arising from the $p$-adic logarithm and the Berkovich--Coleman \cite{coleman_shalit88:p_adic_regulator,berkovic07:integration} integral performed locally on the curve; see Section~\ref{integrationtheories}. It is the abelian integral that is needed for applications, and it is the Berkovich--Coleman integral that can be computed. Specifically, to compute the Berkovich--Coleman integral along a path, one can cover the curve by {\em basic wide opens}, certain analytic open sets, each of which can be embedded into a good reduction curve. The path is broken into segments, each lying in a basic wide open. By picking a lift of Frobenius on each good reduction curve, and performing the Berkovich--Coleman integral there, one can compute the integral along each segment. This integral, however, may disagree with the abelian integral, and indeed, it may be path dependent. These issues arise because the two notions of integral differ on annuli. Indeed, the $1$-form $\omega$ develops poles when extended to the good reduction curve and one is forced to integrate logarithmic differentials of the form $dt/t$. To perform such an integral, one must pick a branch of $p$-adic logarithm. The ambiguity of this choice leads to the two notions of $p$-adic integration: a consistent choice of $p$-adic logarithm gives the Berkovich--Coleman integral; an integration method to force path independence gives the abelian integral. Fortunately, given the Berkovich--Coleman integral and some information about the reduction type of the curve, one can determine the abelian integral. Here, to compare the two integration theories, we follow \cite{krzb15:uniform_bounds} which makes use of the tropical Abel--Jacobi map. There are other approaches: Stoll \cite{stoll2018uniform} made a local analysis of the Abel--Jacobi map; Besser and Zerbes \cite{BesserZerbes} made use of $p$-adic height pairings \cite{BesserPairing}.

While $p$-adic integration on bad reduction curves has been used to prove theoretical results, the only available algorithms and examples have been on the Tate curve. Having algorithms to compute abelian integrals allows one to carry out the method of Chabauty--Coleman \cite{mccallum2012method} at primes of bad reduction. Moreover, additional refinements of such algorithms allow one to compute $p$-adic height pairings on curves \cite{BesserPairing} and $p$-adic regulators in $K$-theory \cite{BesserSyntReg}, again at primes of bad reduction. The purpose of this paper is to provide an algorithm for computing abelian integrals on bad reduction hyperelliptic curves for $p>2$.

Our algorithm works by first computing the Berkovich--Coleman integral $\lBCint \omega$ and then correcting it to an abelian integral. We consider a hyperelliptic curve interpreted as a map $\pi\colon X\to \P^1$ given by $y^2=f(x)$ for a polynomial $f(x)$ defined over a finite extension of $\Q_p$ for $p$ an odd prime. By examining the roots of $f(x)$ and making use of a Newton polygon argument, we are able to cover $\P^{1,\an}$ by open subspaces $\{U_i\}$ such that for each $U_i$ one can find a good reduction hyperelliptic curve $\tilde{X}_i$ into which $\pi^{-1}(U_i)$ embeds as the complement of  finitely many closed discs. In the process of finding the covering $\{\pi^{-1}(U_i)\}$, we determine the dual graph $\Gamma$ of the special fiber of a semistable model of $X$ and therefore its tropicalization.

We expand $\omega$ as a power series in certain meromorphic $1$-forms on $\tilde{X}_i$. We pick a set of meromorphic $1$-forms on $\tilde{X}_i$ that descend to a basis of the odd part of the de Rham cohomology of $\pi^{-1}(U_i)$. Then, by a pole-reduction argument similar to work of Tuitman \cite{Tuitman:counting1, Tuitman:counting2}, we rewrite the terms in the power series as the sum of an exact form and a linear combination of $1$-forms in our basis. Then, one is able to perform the integral using the techniques of \cite{BBKExplicit, BBComputing,balakrishnan2015coleman}. This allows us to integrate $\omega$ between points of $\pi^{-1}(U_i)$.

The Berkovich--Coleman integral is path dependent but invariant under fixed endpoint homotopy. The homotopy class of a path in $X^{\an}$ can be specified by its endpoints together with a path in $\Gamma$ between the tropicalizations of the endpoints. From our knowledge of the intersections of the $U_i$'s and thus of the dual graph, we are able to perform the Berkovich--Coleman integral along any path in $X^{\an}$. In particular, we can integrate $1$-forms along closed paths to determine the Berkovich--Coleman periods. Using them, together with a description of the tropical Abel--Jacobi map, we can correct the Berkovich--Coleman integral along a path to the abelian integral between its endpoints.

Our algorithm works in great generality; but implementing it in a computer algebra system seems out of reach at present, even if we take the base field to be $\Q_p$. There are two main reasons for this:
\begin{itemize}
    \item Sage includes implementations of the integration algorithms in \cite{BBKExplicit, BBComputing,balakrishnan2015coleman} when the base field is $\Q_p$. However, the hyperelliptic curves $\tilde{X}_i$ above are generally defined over non-trivial extensions of $\Q_p$.
    \item In our approach, it might be necessary to work with several extensions of $\Q_p$ at time same time. In Sage, Eisenstein and unramified extensions are implemented; however, neither conversion between these extensions nor general extensions are available.\footnotemark
\end{itemize}
We finally note that, when these obstacles are overcome, it should be possible to implement our algorithm in Sage.

\footnotetext[1]{Unfortunately, Magma has the same limitation.}

The paper is organized as follows. In section 2, we introduce some notation. Section 3 recalls Berkovich--Coleman and abelian integration and gives a formula for converting between them. Section 4 discusses coverings of $\P^1$ and of hyperelliptic curves. In section 5, we describe how to integrate a particular basis of $1$-forms on hyperelliptic basic wide opens. Section 6 provides a pole reduction argument that allows us to rewrite a $1$-form with poles as the sum of an exact form and a linear combination of our basis elements. The $1$-forms on the hyperelliptic curve are expanded as a power series on hyperelliptic basic wide opens in section 7. We compute Berkovich--Coleman integrals on paths and convert them into abelian integrals in section 8. Section 9 provides a number of examples and verifies the vanishing of certain abelian integrals between torsion points. 

\subsection*{Acknowledgements} We would like to acknowledge Jennifer Balakrishnan, Francesca Bianchi, Paul Helminck, Kiran Kedlaya, Daniel Litt and Joseph Rabinoff for helpful discussions regarding this work. The detailed comments of the referee were especially welcome. Special thanks are due to Steffen M\"{u}ller who encouraged our collaboration. Some of the work in this article was carried out during the second-named author's visit to the Ohio State University. He thanks the Mathematics cluster DIAMANT for partially supporting the visit. Eric Katz was partially supported by NSF DMS 1748837. Enis Kaya was partially supported by NWO grant 613.009.124.

\section{Preliminaries}
Let $p$ be an odd prime. Let $\C_p$ denote the completion of an algebraic closure of $\Q_p$. Let $v_p$ be the valuation on $\C_p$, normalized such that $v_p(p)=1$. It corresponds to the absolute value $\|\cdot\|_p$ where $\|a\|_p=p^{-v_p(a)}$. The field $\bar{\F}_p$, the algebraic closure of $\F_p$, is the residue field of $\C_p$. Let $\bK$ be a complete field of finite residue degree over $\Q_p$ with residue field $\bk$. Unless otherwise noted, $\bK$ will be a finite extension of $\Q_p$. Write $R$ for its valuation ring. 

\subsection{$p$-adic Analysis} 
In general, we will use the language of Coleman \cite{coleman89:reciprocity_laws} but will freely invoke Berkovich spaces when convenient. See \cite{krzb15:uniform_bounds} for more details.

We write 
\begin{eqnarray*}
B(a,r)&=&\{z\in\P^{1,\an} \mid \|z-a\|_p<r\},\\
B(\infty,r)&=&\{z\in\P^{1,\an} \mid \|1/z\|_p<r\}.
\end{eqnarray*}
We write $\overline{B}(a,r)$ for those sets when we replace the strict inequality with the nonstrict one. 

Analytic spaces are built by gluing affinoids. Given an affinoid of good reduction $V$, let $\red\colon V\to V_{\bk}$ be its reduction map. The preimage of a closed point under $\red$ is a \defi{residue disc}. In the case of $\P^{1,\an}$, the residue disc about $a\in\A^1(\C_p)$ is $B(a,1)$ while the residue disc about $\infty$ is $B(\infty,1)$. A map $\phi\colon V\to V$ is called \defi{a lift of Frobenius} if it induces the Frobenius map on $V_{\bk}$. 

\begin{defn}
A \defi{wide open} $U$ is a rigid analytic space isomorphic to the complement in a connected smooth complete curve $X$ of finitely many closed discs.
\end{defn}

\begin{defn} 
A \defi{basic wide open} $U$ is a rigid analytic space isomorphic to the complement in a connected good reduction complete curve of finitely many closed discs each contained in a distinct residue disc.
\end{defn}

Two types of basic wide opens will be used in this paper: rational and hyperelliptic. A basic wide open is called \defi{rational} (resp. \defi{hyperelliptic}) if it lies in the rigid analytic space associated to $\P^1$ (resp. a hyperelliptic curve). 

We have the following elementary examples of (rational) basic wide opens. A projective line with one closed disc removed is called an \defi{open disc}, and such a space is isomorphic to $B(0,1)$, the \defi{standard} open disc. Similarly, a projective line with two disjoint closed discs removed is called an \defi{open annulus}; such a space is isomorphic to a \defi{standard} open annulus, i.e., a space of the form 
\[A(r,1) = \{z\in\A^{1,\an}\mid r<\|z\|_p<1\},\ \ r<1.\]

\begin{defn} An \defi{underlying affinoid} of a basic wide open $U$ is an affinoid subdomain $V\subset U$ such that the connected components of $U\setminus V$ are annuli and are in bijective correspondence with the ends of $U$. These annuli are called \defi{boundary annuli}.
\end{defn}

Note that underlying affinoids are necessarily of good reduction. We will need to consider underlying affinoids within rational basic wide opens. Let $U$ be an rational basic wide open. For each closed disc $D_i$, we may pick a slightly larger open disc $D'_i$ (still contained in a residue disc) containing $D_i$. Then $V=U\setminus (\cup_{i} D'_i)$ is an underlying affinoid \cite[Corollary~3.5a]{coleman89:reciprocity_laws} and  $U\setminus V$ is a finite union of annuli. 

We recall some notions of analytic curves and their
skeletons~\cite{baker_payne_rabinoff13:analytic_curves}.
Let $X$ be a smooth, proper, geometrically connected $\bK$-curve.  Let $X^{\an}$ denote the Berkovich analytification of $X$ \cite{berkovic90:analytic_geometry}. Attached to a split semistable $R$-model $\fX$ of $X$ is a metric graph $\Gamma_{\fX}$ called its \defi{skeleton}. There is a retraction $\tau\colon X^\an\to\Gamma_\fX$. The vertices of $\Gamma_{\fX}$ correspond bijectively to the irreducible components of the special fiber of $\fX$. Any curve admits a split semistable model (and hence a skeleton) after 
making a finite extension of the ground field $\bK$. 

\subsection{Differential Forms}

Let $X/\bK$ be a curve of genus $g$.

\begin{defn}
A meromorphic $1$-form on $X$ over $\bK$ is said to be of the \defi{first kind} if it is holomorphic, of the \defi{second kind} if it has residue $0$ at every point, and of the \defi{third kind} if it is regular, except possibly for simple poles with integer residues.
\end{defn}

The exact differentials, i.e., differentials of rational functions, are of the second kind. The $\bK$-vector space of differentials of the second kind modulo exact differentials  is canonically isomorphic to $H_{\dR}^1(X/\bK)$, the first algebraic de Rham cohomology of $X/\bK$. 

\subsection{Hyperelliptic Curves} 
We will consider hyperelliptic curves defined by $y^2=f(x)$, for a polynomial $f(x)$ with distinct roots. The curve has a compactification $X$ with a degree $2$ map $\pi\colon X\to\P^1$. If $f(x)$ is of degree $d$, then $X$ is of genus $\lfloor \frac{d-1}{2}\rfloor$. 

The curve $X$ has a hyperelliptic involution extending $w(x,y)=(x,-y)$. The fixed points of the involution are the \defi{Weierstrass} points. If $d$ is even, then there are two distinct points lying over $\infty$, and these points are non-Weierstrass; if $d$ is odd, then there is a single point lying over $\infty$, and this point is Weierstrass.

On $X$, we say that a residue disc (with respect to $x$) is said to be \defi{Weierstrass} (resp. \defi{non-Weierstrass}) if it corresponds to a Weierstrass (resp. non-Weierstrass) point. In the odd degree case, we also distinguish between \defi{finite} and \defi{infinite} Weierstrass residue discs, which, respectively, correspond to finite and infinite Weierstrass points. 

\section{Berkovich--Coleman and Abelian Integration}
\label{integrationtheories}

We will define Berkovich--Coleman and abelian integration and give a formula for passing between them.

\subsection{$p$-adic Integration Theories}

In this subsection, we review $p$-adic integration theories, referring the reader to \cite{krzb15:uniform_bounds} for details.

Let $X$ be a smooth $\C_p$-analytic space, and let $\cP(X)$ be the set of paths
$\gamma\colon [0,1]\rightarrow X$ with ends in $X(\C_p)$.
Let $\Omega^1(X)$ be the space of holomorphic $1$-forms on $X$.   

\begin{defn}
An \defi{integration theory} on $X$ is a map $\int\colon \cP(X)\times \Omega^1(X)\rightarrow\C_p$ satisfying the following:
\begin{enumerate}
\item If $U\subset X$ is an open subdomain isomorphic to an open polydisc and $\omega|_U=df$ with $f$ analytic on $U$, then $\int_\gamma\omega=f(\gamma(1))-f(\gamma(0))$ for all $\gamma\colon [0,1]\to U$.
\item $\int_\gamma \omega$ only depends on the fixed endpoint homotopy class of $\gamma$.
\item If $\gamma'\in\cP(X)$ and $\gamma'(0)=\gamma(1)$, then
\[\int_{\gamma'*\gamma} \omega=\int_\gamma \omega+\int_{\gamma'} \omega \]
where $\gamma'*\gamma$ is the concatenation.
\item $\omega\mapsto\int_\gamma \omega$ is linear in $\omega$ for fixed $\gamma$.
\end{enumerate}
\end{defn}

One such integration theory is \defi{Berkovich--Coleman} integration $\lBCint$ which specifies a unique integral by fixing the integral on annuli and mandating a change-of-variables formula. Here, one fixes a branch of $p$-adic logarithm, $\Log$, and requires
\begin{enumerate}
\item if $X = \G_m^{\an}=\operatorname{Spec}(\C_p[t,t^{-1}])^{\an}$, then
\[\BCint_1^x \frac{dt}{t}=\Log(x),\]
\item if $h\colon X\rightarrow Y$ is a morphism, $\omega\in \Omega^1(Y)$ and $\gamma\in\cP(X)$, then
\[\BCint_\gamma h^*\omega=\BCint_{h(\gamma)} \omega.\] 
\end{enumerate} 

Unfortunately, the Berkovich--Coleman integral is generally path-dependent: $\lBCint_\gamma$ depends on $\gamma$ not just on its endpoints. However, when $X$ is simply-connected, $\lBCint_\gamma$ is path-independent by the homotopy invariance; in this case, we simply write $\lBCint_x^y=\lBCint_\gamma$ for any path $\gamma$ from $x$ to $y$.

For curves, uniqueness follows from covering the curve by basic wide opens and annuli. A path can be rewritten as a concatenation of paths, each staying in an element of the covering. The integral on basic wide opens is determined by the change-of-variables formula using a lift of Frobenius. The integral on annuli is determined by writing the restriction of the function as a two-sided
power series and integrating term-by-term (using $\Log(t)$ to antidifferentiate $dt/t$).

\begin{rem} \label{BCintegralislocal}
The Berkovich--Coleman integral is local in the sense that if $U\subset X$ is an analytic subdomain and $\gamma\colon [0,1]\to U$ is a path in $U$, the integral $\lBCint_\gamma\omega$ can be computed from $U$, $\gamma$, and $\omega|_U$. 
\end{rem}

The Berkovich--Coleman integration has a useful characterization on basic wide opens \cite[Section~2]{coleman_shalit88:p_adic_regulator}. On a basic wide open $U$, the Berkovich--Coleman integral is univalent: given $\omega\in\Omega^1(U)$, there is a locally analytic function $f_\omega$ unique up to a global constant such that
\[\BCint_\gamma \omega=f_\omega(\gamma(1))-f_\omega(\gamma(0)).\]
We describe such $f_\omega$. Fix a basic wide open $U$ and $\omega\in\Omega^1(U)$.  

Let $V$ be an underlying affinoid of $U$ and let $\cA$ denote the set of annuli which are the connected components of $U\setminus V$. A \defi{Frobenius neighborhood} of $V$ in $U$ is a pair consisting of a basic wide open $W$ with $V\subset W\subset U$ and a  morphism $\phi\colon W\to U$ restricting to a lift of Frobenius on $V$. 

For an open subdomain $U'\subset U$, let 
\[A_{\Log}(U')=A(U')[\{\Log(f)\mid f\in A(U')^{\times}\}]\]
where $A(U')$ is the set of analytic functions on $U'$. Write $\cL(U)$ for locally analytic functions on $U$. Note that
\[A_{\Log}(E)\subset\cL(E) \text{ for each } E\in\cA.\] 
Let $P(T)$ be a polynomial without roots-of-unity roots such that (after possibly shrinking $W$ to ensure $P(\phi^*)$ is well-defined) $P(\phi^*)$ annihilates $(\Omega^1(U)|_W)/dA(W)$. Such a polynomial exists by the Weil conjectures for curves.

\begin{lem} \emph{(\cite[Proposition~2.4.1]{coleman_shalit88:p_adic_regulator})}
The locally analytic function $f_\omega\in\cL(U)$ is characterized up to addition of a global constant by the following properties:
\begin{enumerate}
    \item $f_\omega|_E \in A_{\Log}(E)$ for each $E\in\cA$,
    \item $df_\omega=\omega$, and
    \item $P(\phi^*)f_\omega|_W\in A(W)$. 
\end{enumerate}
\end{lem}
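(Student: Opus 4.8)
The plan is to establish the three stated properties and then prove the claimed uniqueness up to a global constant. First I would construct $f_\omega$ explicitly, or at least exhibit its existence, by working on the underlying affinoid $V$. On $V$, which is of good reduction, the Berkovich--Coleman integral coincides with the classical Coleman integral, so there is a locally analytic primitive of $\omega|_V$ characterized (up to constant) by the change-of-variables relation $P(\phi^*)$ applied on the Frobenius neighborhood $W$. Concretely, since $P(\phi^*)$ annihilates $(\Omega^1(U)|_W)/dA(W)$, the form $P(\phi^*)\omega|_W$ is exact: there is $h\in A(W)$ with $dh = P(\phi^*)\omega$. One then solves for the primitive $f_\omega$ of $\omega$ by inverting $P(\phi^*)$ in an appropriate sense; this is exactly Coleman's mechanism for producing the integral on $V$ and it gives a function satisfying property (3) by construction. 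The primitive thus defined is unique on $V$ up to an additive constant because any two primitives differ by a $\phi^*$-invariant analytic function, and $P$ having no roots-of-unity roots forces such a function to be a constant.

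Next I would extend $f_\omega$ from $V$ across the boundary annuli $E\in\cA$ to obtain a locally analytic function on all of $U$, and verify property (1). On each annulus $E$ one writes $\omega|_E$ as a two-sided power series in a parameter $t$; integrating term-by-term produces a primitive lying in $A_{\operatorname{Log}}(E)$, since the only obstruction to an honest analytic primitive is the residue term $c\,dt/t$, whose antiderivative is $c\operatorname{Log}(t)\in A_{\operatorname{Log}}(E)$. Matching constants of integration on the overlaps $E\cap V$ glues these local primitives to the primitive already built on $V$, yielding a single $f_\omega\in\cL(U)$ with $df_\omega=\omega$ (property (2)) and $f_\omega|_E\in A_{\operatorname{Log}}(E)$ (property (1)). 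That $df_\omega=\omega$ holds globally is immediate from the local constructions agreeing on overlaps.

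For the uniqueness statement, suppose $f$ and $f'$ both satisfy (1)--(3), and set $g=f-f'$. Then $dg=0$, so $g$ is locally constant; since $U$ is connected, $g$ is a single constant $c$ on the affinoid part, but a priori $g$ could take different constant values on different boundary annuli. The key point is that property (1) together with property (3) rigidifies these constants: on each $E$, $g|_E\in A_{\operatorname{Log}}(E)$ is locally constant, hence a genuine constant (no $\operatorname{Log}$ term survives differentiation to zero unless its coefficient vanishes), and property (3), namely $P(\phi^*)g|_W\in A(W)$ with $g$ locally constant and $P$ having no roots-of-unity roots, forces all these constants to coincide. I would spell this out by noting $P(\phi^*)$ acts on the constant sheaf through $P(1)\neq 0$ after accounting for how $\phi$ permutes the components, so consistency across $W$ collapses the locally constant $g$ to a single global constant.

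\textbf{Main obstacle.} The hard part will be the gluing-and-rigidification argument, i.e., showing that the locally constant ambiguity on the disconnected boundary annuli is pinned down to a single global constant rather than one constant per annulus. This requires using property (3) in an essential way, since properties (1) and (2) alone permit independent constants on each $E$. Controlling how the lift of Frobenius $\phi$ moves points of $W$ between the affinoid and the annuli, and checking that $P(\phi^*)$ restricted to locally constant functions is injective modulo global constants (which is where the no-roots-of-unity hypothesis on $P$ enters), is the delicate step; everything else reduces to the standard good-reduction theory on $V$ and elementary term-by-term integration on annuli.
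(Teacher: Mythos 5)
The paper does not actually prove this lemma; it quotes it from Coleman--de Shalit \cite[Proposition~2.4.1]{coleman_shalit88:p_adic_regulator}, so your attempt has to be measured against that argument. Your overall architecture does match it (Frobenius mechanism on the affinoid side for existence, term-by-term integration on the annuli, uniqueness by analyzing $g=f-f'$ with $dg=0$), but your uniqueness argument contains a genuine gap. You write: ``$dg=0$, so $g$ is locally constant; since $U$ is connected, $g$ is a single constant $c$ on the affinoid part.'' This inference is false in the $p$-adic setting. A locally analytic function with vanishing differential is constant only on each residue disc; the underlying affinoid $V$ has infinitely many residue discs, so the kernel of $d$ on locally analytic functions on $V$ is the huge space of arbitrary functions on the closed points of the reduction $V_{\bk}$, and rigid-analytic connectedness of $U$ does nothing to collapse it. Properties (1) and (2) alone therefore permit not merely one constant per boundary annulus, as you claim, but one independent constant per residue disc of $V$. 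The heart of the cited proof is exactly the step you omit: $P(\phi^*)g|_W$ is locally constant and, by (3), analytic on the connected $W$, hence a genuine constant $c$; passing to the reduction, $g$ induces a function $\bar g$ on the closed points of $V_{\bk}$ satisfying $P(F^*)\bar g=c$, where $F$ is the Frobenius. Each closed point has a finite $F$-orbit, so on functions on an orbit of size $n$ the operator $F^*$ satisfies $T^n-1$, whose roots are roots of unity; since $P$ has none, $P(F^*)$ is invertible on each orbit, forcing $\bar g$ to equal the single constant $a$ with $P(1)a=c$. Only after this does the matching you describe (using $P(1)\neq 0$ and the fact that $\phi$ carries the annular ends of $W$ into the corresponding $E$) pin the finitely many annular constants to $a$. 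Your ``main obstacle'' paragraph gestures at the right mechanism (injectivity of $P(\phi^*)$ on locally constant functions modulo constants), but you deploy it only against the finitely many annular components, whereas the essential and harder application is to the infinitely many residue discs of $V$ itself.

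There is also a slip in your existence sketch: you propose to glue the annular primitives to the affinoid one ``on the overlaps $E\cap V$,'' but these are empty --- the annuli $E$ are by definition the connected components of $U\setminus V$. The matching must instead take place on $E\cap W$, the annular ends of the Frobenius neighborhood $W$, which are nonempty sub-annuli of the $E$'s; this is the same place where one needs control of how $\phi$ acts on the ends, and it is fixable but not as written.
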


Functions that satisfy the properties of the above lemma are said to be \defi{Coleman analytic} or to be a \defi{Coleman primitive} of $\omega$.

The following result which interchanges limits and integration follows from applying the characterization to $\lim f_{\omega_i}$.

\begin{prop} \label{swapintsum} 
For a basic wide open $U$ and an underlying affinoid $V$ in $U$, let $\cA$, $W$ and $\phi$ be as above. If $\{f_{\omega_i}\}$ be a sequence of locally uniformly convergent Coleman analytic functions on $U$ such that
\begin{enumerate}
    \item $\{f_{\omega_i}|_E\}$ converges uniformly in $A_{\Log}(E)$ for each $E\in\cA$.
    \item $\{\omega_i\}$ converges uniformly in $\Omega^1(U)$, and
    \item $\{P(\phi^*)f_{\omega_i}|_W\}$ converges uniformly on $W$.
\end{enumerate}
Then the locally analytic limit $\lim f_{\omega_i}$ is a Coleman primitive of $\lim \omega_i$ on $U$.
\end{prop}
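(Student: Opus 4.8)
The plan is to verify directly that the locally analytic limit $f := \lim f_{\omega_i}$ satisfies the three conditions of the preceding lemma (the characterization of Coleman--Shalit) for the pair $(f,\omega)$, where $\omega := \lim\omega_i$. Since those conditions pin down a Coleman primitive up to a global constant, establishing them is exactly what is needed. In every case the work reduces to showing that the operation in question --- restriction to a boundary annulus, exterior differentiation, or the operator $P(\phi^*)$ --- is continuous for the relevant notion of uniform convergence, so that it commutes with passing to the limit, and that the target function space is closed under these limits.

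I would first dispatch conditions (1) and (3), which are of the same flavor. For condition (1), on a boundary annulus $E\in\cA$ each $f_{\omega_i}|_E$ lies in $A_{\Log}(E)$ because the $f_{\omega_i}$ are Coleman analytic. Hypothesis (1) of the proposition provides uniform convergence of $\{f_{\omega_i}|_E\}$ in $A_{\Log}(E)$; since uniform convergence on $E$ implies locally uniform convergence and limits are unique, this limit coincides with $f|_E$, and as $A_{\Log}(E)$ is complete for the governing norm (after fixing the degree in $\Log(t)$ it is a finite direct sum of copies of the Banach space $A(E)$), we get $f|_E\in A_{\Log}(E)$. For condition (3), the pullback $\phi^*$ is continuous for uniform convergence on $W$ and $P$ is a polynomial, so $P(\phi^*)f|_W=\lim P(\phi^*)f_{\omega_i}|_W$; each term lies in $A(W)$ by Coleman analyticity, the sequence converges uniformly by hypothesis (3), and $A(W)$ is complete for uniform convergence, so the limit lies in $A(W)$.

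The heart of the matter is condition (2). Each $f_{\omega_i}$ is a Coleman primitive of $\omega_i$, so $df_{\omega_i}=\omega_i$, and I must justify $df=\lim df_{\omega_i}$. Working on any open disc contained in $U$, where every $f_{\omega_i}$ is a single-valued analytic function (the relevant branch of $\Log$ being analytic there), the nonarchimedean Cauchy estimate $\|g'\|_r\le r^{-1}\|g\|_r$ shows that uniform convergence of analytic functions on a disc of radius $r$ forces uniform convergence of their derivatives on the very same disc, with no shrinking required. Hence exterior differentiation is continuous for the locally uniform convergence used to define $f$, and combining this with hypothesis (2) yields $df=\lim\omega_i=\omega$ on $U$.

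Having verified conditions (1)--(3), the characterization lemma identifies $f$ as a Coleman primitive of $\omega$, which is the assertion. I expect the main obstacle to be condition (2): one must interchange differentiation with the limit, which rests on the Cauchy-type estimate above, and, more delicately, one must reconcile the single locally uniform convergence defining $f$ with the separate Banach-space convergences imposed in hypotheses (1) and (3), checking that all these limits are the restrictions of one and the same function. Once that compatibility of limit notions is in place, the remaining verifications are formal.
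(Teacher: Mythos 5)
Your proposal is correct and is precisely the argument the paper intends: the paper proves Proposition~\ref{swapintsum} in one line by ``applying the characterization to $\lim f_{\omega_i}$,'' i.e.\ by verifying conditions (1)--(3) of the Coleman--de Shalit lemma for the limit function, which is exactly what you carry out. Your supporting details are sound, including the nonarchimedean Cauchy estimate $\|g'\|_r\le r^{-1}\|g\|_r$ (valid on the same disc since $\|n\|_p\le 1$), which justifies interchanging $d$ with the locally uniform limit, and the uniqueness-of-limits remark reconciling the convergences in hypotheses (1) and (3) with restrictions of the single limit $f$.
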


Another approach to defining a $p$-adic integration theory on
a curve is via the $p$-adic Lie theory of its Jacobian.  This was done in great generality by Zarhin~\cite{Zarhin1996} and Colmez \cite{colmez1998integration}.

Let $A$ be an abelian variety over $\C_p$. Recall that every $1$-form on $A$ is translation-invariant. In other words, \[\Omega^1(A) = \Omega^1_{\operatorname{inv}}(A).\]
The \defi{abelian logarithm} on $A$ is the unique homomorphism of $\C_p$-Lie groups 
\[\log_{A(\C_p)}\colon  A(\C_p)\to\Lie(A)\] 
whose linearization
\[d\log_{A(\C_p)} \colon \Lie(A) \To \Lie(\Lie(A)) = \Lie(A)
\]
is the identity map. See~\cite{Zarhin1996} for the existence and uniqueness of
$\log_{A(\C_p)}$.  For $x\in A(\C_p)$ and $\omega\in\Omega^1(A)$, we define
\[ \presuper\Ab\int_0^x\omega = \angles{\log_{A(\C_p)}(x),\,\omega}\]
where $\angles{\scdot,\scdot}$ is the pairing between $\Lie(A)$ and
$\Omega^1(A)$.  For $x,y\in A(\C_p)$, we set
\[ \presuper\Ab\int_x^y\omega = \presuper\Ab\int_0^y\omega -
\presuper\Ab\int_0^x\omega. \]
We call $\lAbint$ the \defi{abelian integral} on $A$.

The abelian logarithm and the abelian integral are functorial under homomorphisms of abelian varieties: if $h\colon A\rightarrow B$ is a homomorphism, then
\begin{enumerate}
    \item $dh\circ \log_{A(\C_p)} = \log_{B(\C_p)}\circ\ h$,
    \item for $\omega\in\Omega^1(B)$ and $x,y\in A(\C_p)$,
    \[\presuper{\Ab}{\int}_x^y h^*\omega=\presuper{\Ab}{\int}_{h(x)}^{h(y)}\omega.\]
\end{enumerate}

We may define an integration theory on a smooth, proper, connected curve $X$ over $\C_p$ by pulling back the abelian integral from its Jacobian $J$ by the Abel--Jacobi map $\iota\colon X\to J$ with respect to a base-point $x_0\in X(\C_p)$. This integral depends only on the endpoints of a path $\gamma$, but it is not local.

\begin{rem} \label{torsionvanish}
Because $\Lie(J)$ is torsion free, if $x,y$ are points in $X(\C_p)$ such that $[y]-[x]$ represents a torsion point of $J(\C_p)$, then 
\[\Abint_x^y \omega = 0 \text{ for all } \omega\in \Omega^1(X).\]
\end{rem}

\subsection{Integral Comparison}

Following \cite{krzb15:uniform_bounds}, we can compare the Berkovich--Coleman and abelian integrals. Let $A$ be an abelian variety over $\C_p$ and let $\pi\colon  E^\an\to A^\an$ be the topological universal cover of $A^\an$. We have the \defi{Raynaud uniformization cross},
\[\xymatrix @=.20in{
    & {T^\an} \ar[d] & \\
    {M'} \ar[r] & {E^\an} \ar[r]^\pi\ar[d] & {A^\an} \\
    & {B^\an} &}\]
with exact row and column where $M'$ is canonically isomorphic to $\pi_1(A^\an)=H_1(A^{\an};\Z)$, $T$ is a torus and $B$ is an abelian variety with good reduction.
Let $M$ be the character lattice of $T$, so $T = \Spec(\C_p[M])$.

Let $N = \Hom(M,\Z)$. There is a surjective group homomorphism, the \defi{tropicalization map} $\trop\colon  E(\C_p)\to N_\Q=\Hom(M,\Q)$. The restriction of
$\trop$ to $M'\subset E(\C_p)$ is injective, and its image
$\trop(M')\subset N_\Q$ is a full-rank lattice in the real vector space
$N_\R=\Hom(M,\R)$.  We can define the real torus $\Sigma=N_\R/\trop(M')$ to be the \defi{skeleton} of $A$. 
The tropicalization map $\bar\tau\colon A^{\an}\to\Sigma$ is defined as the quotient of $\trop$ and fits into the following
commutative diagram:
\[\xymatrix @R=.2in{
    0 \ar[r] & {M'} \ar[r] \ar[d]^{\trop}_\simeq
    & {E^\an} \ar[r] \ar[d]^{\trop}
    & {A^\an} \ar[r] \ar[d]^{\bar\tau} & 0 \\
    0 \ar[r] & {\trop(M')} \ar[r] 
    & {N_\R} \ar[r] & {\Sigma} \ar[r] & 0}\]
The torus $\Sigma$ is a deformation retract of $A^{\an}$.

To compare the two integrals, we first define logarithms $\Log_{\BC},\Log_{\Ab}\colon E(\C_p)\to \Lie(E)$. Using the isomorphisms, 
\[\Lie(E)\cong \Omega^1_{\iinv}(E)^*\cong \Omega^1_{\iinv}(A)^*\cong \Omega^1(A)^*,\]
we define
\begin{align*}
\Log_{\BC}\colon E(\C_p)&\to \Lie(E) & \Log_{\Ab}\colon E(\C_p)&\to \Lie(E)\\
x&\mapsto \left[\omega\mapsto \BCint_0^x \omega\right]& x&\mapsto \left[\omega\mapsto \Abint_0^{\pi(x)} \omega\right].
\end{align*}

\begin{prop}
\emph{(\cite[Proposition~3.16]{krzb15:uniform_bounds})} The difference between the two logarithms
\[\Log_{\BC}-\Log_{\Ab}\colon E(\C_p)\to \Lie(E)\]
factors as
\[\xymatrix{
E(\C_p)\ar[r]^{\trop}&N_{\Q}\ar[r]^>>>>L&\Lie(E)}\]
where $L$ is a linear map.
\end{prop}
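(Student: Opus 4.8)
The plan is to show that the map $\Log_{\BC} - \Log_{\Ab}$ vanishes on a subgroup on which $\trop$ is trivial, and then descend through $\trop$ to obtain the desired factorization. First I would observe that both $\Log_{\BC}$ and $\Log_{\Ab}$ are homomorphisms of $\C_p$-Lie groups from $E(\C_p)$ to $\Lie(E)$, so their difference $\Delta := \Log_{\BC} - \Log_{\Ab}$ is also a homomorphism. Since $\trop\colon E(\C_p) \to N_\Q$ is a surjective group homomorphism, to produce the linear factorization it suffices to show that $\Delta$ kills $\ker(\trop)$; the induced map on $N_\Q = E(\C_p)/\ker(\trop)$ is then automatically a homomorphism of $\Q$-vector spaces, hence $\Q$-linear, and extends to the asserted linear map $L$.

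The heart of the matter is therefore to identify $\ker(\trop)$ and to check that $\Delta$ annihilates it. The kernel of $\trop$ consists of the points of $E(\C_p)$ that retract to the zero of the skeleton, equivalently the points mapping into the good-reduction abelian variety $B$ in the Raynaud uniformization cross, together with the bounded part of the torus $T^{\an}$. On such points I expect the Berkovich--Coleman and abelian integrals to agree: the obstruction to their equality lives entirely on the cycles of the skeleton and on the annuli that the torus $T$ contributes, precisely the part measured by $\trop$. Concretely, I would argue that for $x$ with $\trop(x)=0$, one can choose a representative path from $0$ to $\pi(x)$ that is homotopic into the good-reduction locus, where the two logarithms are computed from the same local primitives (the change-of-variables formula for $\Log_{\BC}$ and the functoriality of $\log_{A(\C_p)}$ coincide in the absence of logarithmic singularities of type $dt/t$). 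Thus $\Delta(x) = 0$ whenever $\trop(x) = 0$.

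Having shown $\ker(\trop) \subseteq \ker(\Delta)$, the factorization through $\trop$ follows formally. It remains only to see that the induced map $N_\Q \to \Lie(E)$ is linear rather than merely additive. This is where I would use that both sides are $\C_p$-vector spaces and that the integration maps are compatible with the $\Q$-multiplication coming from the divisible group structure on $E(\C_p)$: an additive map out of the $\Q$-vector space $N_\Q$ is automatically $\Q$-linear, and the target $\Lie(E)$ is a $\C_p$-vector space into which this extends $\R$-linearly (or $\Q$-linearly, as stated). Denoting this map $L$ gives $\Log_{\BC} - \Log_{\Ab} = L \circ \trop$.

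The main obstacle I anticipate is the middle step: rigorously establishing that $\Delta$ vanishes on $\ker(\trop)$. This requires a careful comparison of the two integration theories on the good-reduction part of $E^{\an}$, showing that the discrepancy between fixing a branch of $p$-adic logarithm (the Berkovich--Coleman prescription) and enforcing path-independence via the Jacobian (the abelian prescription) is invisible once the tropicalization is zero. The cleanest route is probably to pull everything back to the universal cover $E^{\an}$, where $\Log_{\Ab}$ becomes transparent because $\Omega^1(A)$ consists of invariant forms, and to exploit that the difference is controlled only by the periods along $M' \cong H_1(A^{\an};\Z)$, whose tropicalization spans $\trop(M')$; I would then need to verify that these periods are exactly what $L$ records. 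Everything else is formal group theory and linear algebra.
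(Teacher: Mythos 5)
The paper itself supplies no proof of this proposition: it is quoted wholesale from \cite[Proposition~3.16]{krzb15:uniform_bounds}, so your attempt can only be measured against the argument there and against general correctness. Your formal frame is right and coincides with the opening move of that argument: $\Delta:=\Log_{\BC}-\Log_{\Ab}$ is a homomorphism (though even this deserves a word for $\Log_{\BC}$ --- one needs path-independence on the simply connected $E^{\an}$, translation-invariance of the forms, and the change-of-variables property to see that $x\mapsto\BCint_0^x\omega$ is additive); since $\trop$ surjects onto the $\Q$-vector space $N_\Q$ and $\Lie(E)$ is torsion-free and uniquely divisible, everything reduces to showing $\Delta(\ker(\trop))=0$, after which additivity of the induced map gives $\Q$-linearity for free. (Your aside about extending ``$\R$-linearly'' is spurious; nothing is, or needs to be, real-linear here.)

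The genuine gap is exactly the step you flag yourself: the vanishing of $\Delta$ on $\ker(\trop)$. The mechanism you propose --- choose a path from $0$ to $\pi(x)$ homotopic into the good-reduction locus, where ``the two logarithms are computed from the same local primitives'' --- fails as stated, because the abelian integral is not defined by integration along paths at all: it is defined through $\log_{A(\C_p)}$, so there is no path on the abelian side to homotope, and the claim that the two theories agree on the good-reduction part is precisely what must be proved, not an input. The correct repair is infinitesimal rather than path-theoretic. Both logarithms have the identity as differential at the origin (for $\Log_{\Ab}$ by the defining property of $\log_{A(\C_p)}$; for $\Log_{\BC}$ because the Berkovich--Coleman integral is a local primitive, $d\BCint_0^x\omega=\omega$), so $\Delta$ is a homomorphism of $p$-adic Lie groups with vanishing differential and hence vanishes on an open subgroup $U$ of $E(\C_p)$. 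One then checks that $\ker(\trop)/\bigl(U\cap\ker(\trop)\bigr)$ is torsion: $\ker(\trop)$ is an extension of $B(\C_p)$ by the unit subgroup of the torus, the groups of $\bar{\F}_p$-points of the special fibers are torsion, and formal-group points are carried into $U$ by multiplication by high powers of $p$. Since $\Lie(E)$ is torsion-free, $\Delta$ kills all of $\ker(\trop)$. Until an argument of this kind is supplied, your middle paragraph is a plausibility sketch, not a proof; note that the only fragment of this circle of ideas the present paper actually rehearses, in the proof of Lemma~\ref{FormulaForL}, is the easy observation $\Log_{\Ab}(M')=0$, which does not substitute for the vanishing on all of $\ker(\trop)$.
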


Using the identification $H_1(A^{\an};\Z)\cong M' \cong \trop(M')$ and the inclusion $\trop(M')\subset N_\Q$, we have the following: 
\begin{lem} \label{FormulaForL}
The map $L$ is characterized by the property that for any $C\in H_1(A^{\an};\Z)$,
\[L(C)=\left[\omega\mapsto \BCint_{\gamma} \omega\right]\]
where $\gamma$ is any loop in $\cP(A^{\an})$ whose homology class is equal to $C$.
\end{lem}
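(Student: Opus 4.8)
The plan is to prove the characterization by unwinding the definitions, evaluating the factorization $\Log_{\BC}-\Log_{\Ab}=L\circ\trop$ from the preceding Proposition at lattice points. Recall from the Raynaud uniformization cross that $M'=\ker(\pi)$, so $M'\subset E(\C_p)$ and $\pi(m')=0$ in $A^{\an}$ for every $m'\in M'$. Given a class $C\in H_1(A^{\an};\Z)$, let $m'\in M'$ be the element corresponding to $C$ under the canonical isomorphism $H_1(A^{\an};\Z)\cong\pi_1(A^{\an})\cong M'$; by the very definition of the identification $H_1(A^{\an};\Z)\cong\trop(M')\subset N_\Q$, the class $C$ is sent to $\trop(m')$. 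Hence it suffices to show $L(\trop(m'))=\bigl[\omega\mapsto\BCint_\gamma\omega\bigr]$.

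First I would compute the two logarithms on $m'$. Since $\pi(m')=0$, the abelian part vanishes: $\Log_{\Ab}(m')(\omega)=\Abint_0^{\pi(m')}\omega=\Abint_0^0\omega=0$ for all $\omega\in\Omega^1(A)$. Evaluating the factorization at $m'\in E(\C_p)$ therefore gives
\[L(\trop(m'))=\Log_{\BC}(m')-\Log_{\Ab}(m')=\Log_{\BC}(m'),\]
so the problem reduces to identifying $\Log_{\BC}(m')$ with integration along $\gamma$.

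Next I would pass between $E$ and $A$ using the covering map. Because $E^{\an}$ is the topological universal cover of $A^{\an}$, it is simply connected and the Berkovich--Coleman integral on $E^{\an}$ is path independent by homotopy invariance; moreover $\pi_1(A^{\an})\cong M'$ is free abelian, so a loop in $A^{\an}$ is determined up to fixed-endpoint homotopy by its homology class, which makes the right-hand side independent of the chosen representative $\gamma$. Lifting a representative loop $\gamma$, based at the image of $0$, to a path $\tilde\gamma$ in $E^{\an}$ starting at $0$, the standard deck-transformation description of the universal cover shows that $\tilde\gamma$ ends at the lattice point $m'$ attached to $C$. Then the functoriality of Berkovich--Coleman integration under $\pi$ gives, for each $\omega\in\Omega^1(A)$,
\[\Log_{\BC}(m')(\omega)=\BCint_0^{m'}\pi^*\omega=\BCint_{\tilde\gamma}\pi^*\omega=\BCint_{\pi\circ\tilde\gamma}\omega=\BCint_\gamma\omega,\]
where the first equality is the definition of $\Log_{\BC}$ under the identification $\Lie(E)\cong\Omega^1(A)^*$ combined with path independence on $E^{\an}$, and the third is the change-of-variables property of $\BCint$. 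Combining this with the previous step yields $L(C)(\omega)=\BCint_\gamma\omega$, as claimed.

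The main thing to get right is the compatibility between the purely topological picture and the tropical picture: one must verify that the endpoint of the lift $\tilde\gamma$ really is the element $m'\in M'\subset E(\C_p)$ corresponding to $C$ under all of the identifications $H_1(A^{\an};\Z)\cong\pi_1(A^{\an})\cong M'\cong\trop(M')$, so that evaluating $L$ at $\trop(m')$ agrees with evaluating it on the image of $C$. Since the canonical isomorphism $\pi_1(A^{\an})\cong M'$ is itself given by the monodromy action on the fiber $\ker(\pi)=M'$ over the base point, this is a bookkeeping check that the chosen identifications are the compatible ones. Everything else is a mechanical unwinding of definitions, the only analytic inputs being the vanishing of $\Log_{\Ab}$ on $\ker(\pi)$, path independence on the simply connected cover, and functoriality of $\BCint$ under morphisms.
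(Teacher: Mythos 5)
Your proof is correct and follows essentially the same route as the paper: evaluate the factorization $\Log_{\BC}-\Log_{\Ab}=L\circ\trop$ at the lattice point $m'\in M'=\ker(\pi)$ corresponding to $C$, observe that $\Log_{\Ab}(M')=0$ because the abelian logarithm descends to $A(\C_p)$, and identify $L(C)$ with $\Log_{\BC}(\tilde{\gamma}(1))$ where $\tilde{\gamma}$ is the lift of $\gamma$ based at the identity. You merely spell out the final step --- path independence on the simply connected cover $E^{\an}$ plus functoriality of $\BCint$ under $\pi$ --- which the paper leaves implicit.
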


\begin{proof}
Because the abelian logarithm is defined on $A(\C_p)$ (not just its universal cover $E(\C_p)$), we see that $\Log_{\Ab}(M')=0.$ Consequently, $L(C)=\Log_{\BC}(\tilde{\gamma}(1))$
where $\tilde{\gamma}$ is the lift of $\gamma$ in $E^{\an}$ based at the identity element in $E^{\an}(\C_p)$.
\end{proof}

\subsection{Tropical Integration and The Comparison Formula}

We will need to pull back the comparison between integrals to a curve $X$ via its Abel--Jacobi map $\iota\colon X\to J$. To do so, we will make use of the {\em tropical Abel--Jacobi map} which was described using tropical integration by Mikhalkin--Zharkov \cite{mikhalkin_zharkov08:tropical_curves} (see also \cite[Section~3]{baker_rabinoff14:skeleton_jacobian}) together with some results of Baker--Rabinoff \cite{baker_rabinoff14:skeleton_jacobian}. The statement of the comparison 
result is different from that given in \cite{krzb15:uniform_bounds}.

Let $\Gamma$ be a finite connected graph (usually taken to be a graph structure on the skeleton of a curve $X^{\an}$). We will parameterize each oriented edge $e=vw$  by $[0,1]$ using the coordinate $t$ such that $v$ corresponds to $t=0$ and $w$ corresponds to $t=1$. By flipping the orientation of the edge, we change the parameterization by $t'=1-t$. We take each edge of $\Gamma$ to be of length $1$.

\begin{defn}
A \defi{tropical $1$-form} on $\Gamma$ is a function $a\colon \vec{E}(\Gamma)\to\R$ from the set of directed edges to the real numbers such that
\begin{enumerate}
    \item $a(\overline{e})=-a(e)$ where $\overline{e}$ is $e$ with the reversed orientation, and
    \item $a$ satisfies the \defi{harmonicity condition}: for each $v\in V(\Gamma)$,
\[\sum_e a(e)=0\]
where the sum is over edges adjacent to $v$ directed away from $v$.
\end{enumerate} 
Denote the space of tropical $1$-forms on $\Gamma$ by $\Omega^1_{\trop}(\Gamma)$.
\end{defn}

To an oriented edge $e=vw$ of $\Gamma$, let $\eta_e$ be the function $\vec{E}(\Gamma)\to \R$ that is $0$ away from $e$ and takes the value $1$ on $e$ with the given orientation (and $-1$ on $\overline{e}$). For a cycle $C=\sum_e a_e e\in H_1(\Gamma;\R)$, define 
\[\eta_C=\sum_e a_e \eta_e.\]
It is easily seen that $\eta_C$ is a tropical $1$-form.

Given a path $\gamma$ specified as a sequence of directed edges $\gamma=e_1e_2\dots e_\ell$, we define the \defi{tropical integral} of a tropical $1$-form $\eta$ on $\gamma$ by
\[\tint_{\gamma} \eta \coloneq \sum_{i=1}^{\ell} \eta(e_i).\]
Moreover, we may extend the tropical integral to paths between points on $\Gamma$. To a path between points $p$ and $q$ contained in an edge $e$, we define
\[\tint_p^q \eta \coloneq \eta(e)(q-p)\]
where we identify $e$ with $[0,1]$ by use of the orientation on $e$. Then, we extend tropical integration to arbitrary paths by additivity of integrals under concatenation of paths.

For a closed path $\gamma$, this integral is seen to only depend on $[\gamma]\in H_1(\Gamma;\R)$.
Therefore, tropical integration gives a map 
\begin{align*} 
\mu\colon H_1(\Gamma;\R)&\to \Omega^1_{\trop}(\Gamma)^*\\
C&\mapsto \left[\eta\mapsto \tint_{C} \eta\right].
\end{align*}

Recall that the cycle pairing $\langle\cdot,\cdot\rangle$ on $H_1(\Gamma;\R)\subset C_1(\Gamma;\R)$ is the pairing induced from the inner product on $C_1(\Gamma;\R)$ making the set of edges (oriented in some way) into an orthonormal basis. In other words, this pairing takes cycles $C$ and $D$ to the length of their oriented intersection.

The following is easily verified.
\begin{prop} Tropical integration is equal to the cycle pairing in the following sense: for $C,D\in H_1(\Gamma;\R)$,
\[\tint_C \eta_D=\langle C,D\rangle.\]
\end{prop}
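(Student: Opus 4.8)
The plan is to observe that both sides of the claimed identity are bilinear in the pair $(C,D)$ and then to verify the equality on a convenient basis, reducing the whole statement to a one-line sign computation. First I would fix, once and for all, an orientation of each edge of $\Gamma$, so that the chosen directed edges form an orthonormal basis of $C_1(\Gamma;\R)$; I then write $C=\sum_e c_e\,e$ and $D=\sum_e d_e\,e$ in this basis, the sums running over the one fixed orientation per edge. With this convention the cycle pairing becomes simply $\langle C,D\rangle=\sum_e c_e d_e$, which is manifestly bilinear.

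Next I would record the linearity of tropical integration in the cycle variable. Writing a closed path $\gamma$ as a sequence of directed edges and using the antisymmetry $\eta(\overline{e})=-\eta(e)$ together with additivity of $\tint$ under concatenation, one sees that if an edge $e$ is traversed algebraically $c_e$ times (positively oriented traversals minus negatively oriented ones), then $\tint_\gamma \eta=\sum_e c_e\,\eta(e)$. This is precisely the observation, already noted in the excerpt, that $\tint_\gamma\eta$ depends only on the homology class $[\gamma]\in H_1(\Gamma;\R)$; it lets me take $\tint_C\eta=\sum_e c_e\,\eta(e)$ as the value on an arbitrary cycle $C=\sum_e c_e e$, extending by linearity from integral classes to all of $H_1(\Gamma;\R)$. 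In particular $\tint_C\eta$ is linear in $C$, and since $\eta_D=\sum_e d_e\,\eta_e$ is linear in $D$ and $\tint$ is linear in the form, the left-hand side $\tint_C\eta_D$ is bilinear in $(C,D)$, matching the bilinearity of $\langle\cdot,\cdot\rangle$.

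It then suffices to check the identity for $C=e$ and $D=e'$ ranging over the fixed basis of oriented edges, i.e. to compute $\tint_e\eta_{e'}=\eta_{e'}(e)$. By definition $\eta_{e'}$ takes the value $1$ on $e'$, the value $-1$ on $\overline{e'}$, and $0$ on every other directed edge; because $e$ and $e'$ both lie in the single fixed orientation class, the case $e=\overline{e'}$ never arises, so $\eta_{e'}(e)=\delta_{ee'}$, which equals $\langle e,e'\rangle$ by orthonormality. Bilinearity then propagates the equality to all of $H_1(\Gamma;\R)\times H_1(\Gamma;\R)$, yielding $\tint_C\eta_D=\langle C,D\rangle$. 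The computation is entirely routine, and the only step I would watch carefully is the orientation and sign bookkeeping: making sure the antisymmetry convention for $\eta_e$ and the coefficients $c_e$ are taken relative to the \emph{same} chosen orthonormal basis, so that no spurious factor of $-1$ slips in when passing between the two halves of a directed-edge pair.
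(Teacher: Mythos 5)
Your proof is correct; the paper itself offers no argument (it states the proposition as ``easily verified''), and your bilinearity-plus-basis computation is precisely the intended routine verification, with the sign conventions handled properly. The only implicit move worth noting is that checking on single edges $e,e'$ takes you outside $H_1(\Gamma;\R)$ and outside the space of tropical $1$-forms (a lone $\eta_{e'}$ need not satisfy harmonicity), but this is harmless since both sides extend to well-defined bilinear forms on $C_1(\Gamma;\R)\times C_1(\Gamma;\R)$ via the formula $\tint_C\eta=\sum_e c_e\,\eta(e)$, and agreement there restricts to agreement on $H_1(\Gamma;\R)\times H_1(\Gamma;\R)$.
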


This proposition implies that the map $\mu$ is an isomorphism because the cycle pairing is nondegenerate on $H_1(\Gamma;\R)$. 

\begin{cor}
Any cohomology class in $H^1(\Gamma;\R)$ can be represented by a tropical $1$-form: for any $c\in H^1(\Gamma;\R)$, there is a tropical $1$-form $\eta$ such that
\[c(D)=\tint_D \eta\] 
for any $D\in H_1(\Gamma;\R)$.
\end{cor}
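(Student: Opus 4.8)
The plan is to produce the representing tropical $1$-form explicitly, using the preceding Proposition (tropical integration equals the cycle pairing) together with the nondegeneracy of the cycle pairing that was just invoked to conclude $\mu$ is an isomorphism. First I would recall that, since we work with real coefficients, the universal coefficient theorem identifies $H^1(\Gamma;\R)$ with the dual space $H_1(\Gamma;\R)^*$; thus a class $c\in H^1(\Gamma;\R)$ is nothing but a linear functional $c\colon H_1(\Gamma;\R)\to\R$. The goal is to realize this functional as $D\mapsto \tint_D\eta$ for a single fixed tropical $1$-form $\eta$.

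Next I would exploit the nondegeneracy of the cycle pairing $\langle\scdot,\scdot\rangle$ on $H_1(\Gamma;\R)$. Nondegeneracy means the map $C\mapsto\langle\scdot,C\rangle$ is an isomorphism $H_1(\Gamma;\R)\xrightarrow{\ \sim\ }H_1(\Gamma;\R)^*=H^1(\Gamma;\R)$. Applying the inverse to the given class $c$ yields a unique cycle $C_0\in H_1(\Gamma;\R)$ with $\langle D,C_0\rangle=c(D)$ for every $D\in H_1(\Gamma;\R)$. I then set $\eta=\eta_{C_0}$, which is a tropical $1$-form by the remark following the definition of $\eta_C$. The preceding Proposition gives, for all $D$,
\[
\tint_D \eta_{C_0}=\langle D,C_0\rangle=c(D),
\]
which is exactly the asserted representation. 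This finishes the construction.

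I do not expect a genuine obstacle here: the entire content is the nondegeneracy of the cycle pairing, which has already been established, and the rest is bookkeeping about dualities. As an alternative packaging one could note that the linear map $\eta\mapsto\bigl[D\mapsto\tint_D\eta\bigr]$ goes from $\Omega^1_{\trop}(\Gamma)$ to $H^1(\Gamma;\R)$, two spaces of the same dimension $b_1(\Gamma)$ — since the harmonicity condition identifies $\Omega^1_{\trop}(\Gamma)$ with the cycle space $Z_1(\Gamma;\R)=H_1(\Gamma;\R)$ — and deduce surjectivity from injectivity, injectivity again following from the Proposition together with nondegeneracy. I would nonetheless favor the constructive argument above, since it directly exhibits the representative $\eta_{C_0}$ rather than merely asserting its existence.
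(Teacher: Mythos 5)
Your proof is correct and is essentially the paper's own argument: the paper states this corollary without a separate proof, precisely because it follows immediately from the preceding proposition $\tint_C \eta_D = \langle C, D\rangle$ together with the nondegeneracy of the cycle pairing on $H_1(\Gamma;\R)$, which is exactly what you spell out by taking the cycle $C_0$ dual to $c$ and setting $\eta = \eta_{C_0}$. Your identification $H^1(\Gamma;\R)\cong H_1(\Gamma;\R)^*$ and the resulting computation $\tint_D \eta_{C_0}=\langle D, C_0\rangle = c(D)$ fill in the intended details faithfully, and your alternative dimension-count packaging is just the dual formulation of the same fact that $\mu$ is an isomorphism.
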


\begin{cor} \label{l:dualbasis} There exists a basis $C_1,\dots,C_h$ of $H_1(\Gamma;\R)$ and a basis $\eta_1,\dots,\eta_h$ of $\Omega^1_{\trop}(\Gamma)$ such that 
\[
\tint_{C_i} \eta_j=
\begin{cases}
1 & \text{if }\ i=j,  \\
0 & \text{if }\ i\neq j.
\end{cases}
\]
\end{cor}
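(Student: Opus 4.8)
The plan is to deduce the corollary directly from the perfectness of the tropical integration pairing, which has just been established. Concretely, the preceding proposition together with the nondegeneracy of the cycle pairing shows that the map $\mu\colon H_1(\Gamma;\R)\to\Omega^1_{\trop}(\Gamma)^*$ sending $C$ to $[\eta\mapsto\tint_C\eta]$ is an isomorphism. In particular the bilinear form $(C,\eta)\mapsto\tint_C\eta$ on $H_1(\Gamma;\R)\times\Omega^1_{\trop}(\Gamma)$ is a perfect pairing, so the two spaces have the same finite dimension, which I will call $h$.

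First I would choose an arbitrary basis $\eta_1,\dots,\eta_h$ of $\Omega^1_{\trop}(\Gamma)$ and let $\eta_1^*,\dots,\eta_h^*$ be the dual basis of $\Omega^1_{\trop}(\Gamma)^*$, characterized by $\eta_i^*(\eta_j)=\delta_{ij}$. Then I would set $C_i=\mu^{-1}(\eta_i^*)$; because $\mu$ is an isomorphism, $C_1,\dots,C_h$ is a basis of $H_1(\Gamma;\R)$. Unwinding the definition of $\mu$ gives $\tint_{C_i}\eta_j=\mu(C_i)(\eta_j)=\eta_i^*(\eta_j)=\delta_{ij}$, which is exactly the desired duality.

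Alternatively, and perhaps more explicitly, I could start from any basis $C_1,\dots,C_h$ of $H_1(\Gamma;\R)$ and form the Gram matrix $G_{ij}=\langle C_i,C_j\rangle=\tint_{C_i}\eta_{C_j}$, which is invertible since the cycle pairing is nondegenerate. Defining $\eta_j=\sum_k (G^{-1})_{kj}\,\eta_{C_k}$, a linear combination of the tropical $1$-forms $\eta_{C_k}$, one computes $\tint_{C_i}\eta_j=\sum_k (G^{-1})_{kj}\langle C_i,C_k\rangle=\delta_{ij}$, and invertibility of $G$ ensures the $\eta_j$ form a basis of $\Omega^1_{\trop}(\Gamma)$.

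There is essentially no serious obstacle here: the content of the corollary is entirely carried by the fact that $\mu$ is an isomorphism (equivalently, that the cycle pairing is nondegenerate), which is already in hand. The only point requiring a moment's care is confirming that the constructed families are genuine bases rather than merely dual families, and this is immediate from the invertibility of $\mu$ (respectively of $G$).
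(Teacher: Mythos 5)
Your proposal is correct and follows exactly the route the paper intends: the corollary is stated without an explicit proof precisely because it is the standard dual-basis consequence of the isomorphism $\mu\colon H_1(\Gamma;\R)\to\Omega^1_{\trop}(\Gamma)^*$ (equivalently, nondegeneracy of the cycle pairing) established just before it. Both of your constructions, pulling back a dual basis through $\mu^{-1}$ and the explicit Gram-matrix inversion, are valid instantiations of that same argument, with the only delicate point, that the resulting families are bases, correctly handled via the dimension count $\dim\Omega^1_{\trop}(\Gamma)=h$ supplied by $\mu$.
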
 

Now, let $X$ be a smooth, proper, connected curve over $\C_p$ with skeleton $\Gamma$. Note that $\Gamma$ is a deformation retract of $X^{\an}$. Let $J$ be its Jacobian. We can identify the real torus $\Sigma$ from $\Gamma$. 

\begin{prop} There is an isomorphism of real tori 
\[\Sigma\cong \Omega^1_{\trop}(\Gamma)^*/\mu(H_1(\Gamma;\Z)).\]
\end{prop}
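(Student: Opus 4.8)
The plan is to recognize the target $\Omega^1_{\trop}(\Gamma)^*/\mu(H_1(\Gamma;\Z))$ as the tropical Jacobian of $\Gamma$ and to identify it with the skeleton $\Sigma=N_\R/\trop(M')$ of $J$ by matching, separately, the ambient real vector spaces and the uniformizing lattices. This identification is in essence the main comparison of Baker--Rabinoff \cite{baker_rabinoff14:skeleton_jacobian}, so I would present the argument as an assembly of the linear-algebraic facts already established above together with that comparison, rather than as a new computation.

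First I would identify the uniformizing lattices and vector spaces. The Abel--Jacobi map induces an isomorphism on $H_1$, which combined with the canonical isomorphism $M'\cong H_1(J^{\an};\Z)$ recorded in the Raynaud uniformization cross gives $M'\cong H_1(J^{\an};\Z)\cong H_1(X^{\an};\Z)\cong H_1(\Gamma;\Z)$; the last step uses that $\Gamma$ is a deformation retract of $X^{\an}$. On the toric side, the content of \cite{baker_rabinoff14:skeleton_jacobian} is that the Raynaud uniformization data of $J$ is the tropical Jacobian data of its skeleton $\Gamma$: in particular the cocharacter space $N_\R$ is canonically identified with $\Omega^1_{\trop}(\Gamma)^*$. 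Combined with the fact, extracted from the Proposition above, that $\mu\colon H_1(\Gamma;\R)\to\Omega^1_{\trop}(\Gamma)^*$ is an isomorphism, this pins down the ambient real vector spaces on both sides.

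The heart of the matter is to check that, under these identifications, the restriction $\trop|_{M'}$ coincides with $\mu|_{H_1(\Gamma;\Z)}$, since this is exactly what forces the equality of lattices and hence the isomorphism of quotient tori. Both maps are computed by the same pairing. On the Jacobian side, the composite $M'\hookrightarrow E(\C_p)\xrightarrow{\trop}N_\Q$ is the monodromy pairing of the semistable reduction of $J$; on the graph side, the Proposition gives $\tint_C\eta_D=\langle C,D\rangle$, exhibiting $\mu(C)=\langle C,\scdot\rangle$ as the cycle (length) pairing. The theorem of Baker--Rabinoff is precisely the statement that the monodromy pairing equals this length pairing. Hence $\trop(M')=\mu(H_1(\Gamma;\Z))$ inside the common real vector space, and passing to quotients yields the asserted isomorphism of real tori $\Sigma\cong\Omega^1_{\trop}(\Gamma)^*/\mu(H_1(\Gamma;\Z))$.

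I expect the main obstacle to be exactly this last comparison: verifying that the several canonical identifications — Abel--Jacobi on $H_1$, the uniformization data $M,N,M'$ of $J$, and the retraction of $X^{\an}$ onto $\Gamma$ — fit into commuting diagrams, and that the monodromy pairing governing $\trop$ really is the unit-length cycle pairing used to define $\mu$. All of this is contained in \cite{baker_rabinoff14:skeleton_jacobian} (building on the theory of the monodromy pairing), so the remaining work is in citing it correctly and keeping track of normalizations — in particular the convention that each edge of $\Gamma$ has length $1$ — rather than in any genuinely new argument.
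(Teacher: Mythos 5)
Your proposal is correct and matches the paper's proof in essence: the paper likewise assembles the chain of canonical isomorphisms $N_\R\cong\trop(M')\otimes\R\cong H_1(J^{\an};\R)\cong H_1(X^{\an};\R)\cong H_1(\Gamma;\R)\cong\Omega^1_{\trop}(\Gamma)^*$ (using $\iota_*$ and the deformation retraction onto $\Gamma$) and then asserts that under this composition $\trop(M')$ is carried to $\mu(H_1(\Gamma;\Z))$. The only difference is one of detail: the paper leaves that last lattice identification as a bare assertion, whereas you correctly unpack it as the Baker--Rabinoff comparison of the monodromy pairing with the cycle pairing defining $\mu$.
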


\begin{proof}
Because the Abel--Jacobi map $\iota$ induces an isomorphism $\iota_*\colon H_1(X^{\an};\R)\to H_1(J^{\an};\R)$, we have a sequence of isomorphisms,
\[N_{\R}\cong\trop(M')\otimes \R\cong H_1(J^{\an};\R)\cong H_1(X^{\an};\R)\cong H_1(\Gamma;\R)\cong \Omega^1_{\trop}(\Gamma)^*.\]
Under the composition, $\trop(M')$ is mapped to $\mu(H_1(\Gamma;\Z)).$
\end{proof}
Let $P_0$ be a base-point of $\Gamma$. Now, if we let $\tilde{\Gamma}$ denote the universal cover of $\Gamma$ with a base-point $\tilde{P}_0$ over $P_0$, tropical integration gives a map
\begin{align*} 
\tilde{\beta}\colon \tilde{\Gamma}&\to \Omega^1_{\trop}(\Gamma)^*\\
\tilde{Q}&\mapsto \left[\eta\mapsto \tint_{\tilde{P}_0}^{\tilde{Q}} \eta\coloneq\tint_{\gamma} \eta\right]
\end{align*}
where $\gamma$ is the image in $\Gamma$ of the unique path in $\tilde{\Gamma}$ from $\tilde{P}_0$ to $\tilde{Q}$. The map $\tilde{\beta}$ descends to quotients giving the \defi{tropical Abel--Jacobi map}
\[\beta\colon \Gamma\to \Omega^1_{\trop}(\Gamma)^*/\mu(H_1(\Gamma;\Z))\cong\Sigma.\]
The tropical Abel--Jacobi map map is equal to the tropicalization of the Abel--Jacobi map in the following sense. Let $\iota\colon X\to J$ be the  Abel--Jacobi map with respect to $x_0\in X(\C_p)$ . 
Let  $\tau\colon X^{\an}\to \Gamma$ be the tropicalization map and set $P_0=\tau(x_0)$. By a result of Baker--Rabinoff \cite[Proposition~6.1]{baker_rabinoff14:skeleton_jacobian}, the following diagram commutes:
  \[\xymatrix @=.2in{
  {X^\an} \ar[r]^\iota \ar[d]_\tau & {J^\an} \ar[d]^{\bar\tau} \\
  {\Gamma} \ar[r]_\beta & {\Sigma}.
  }\]

Now, we can give a comparison theorem for Berkovich--Coleman and abelian integrals.

\begin{thm} \label{thm:abintformula} Let $X^{\an}$ be a connected, smooth, compact analytic curve over $\C_p$ with skeleton $\Gamma$ and retraction $\tau\colon X^{\an}\to \Gamma$. Let $x_0\in X(\C_p)$ be a base-point and set $P_0=\tau(x_0)$. Let $C_1,\dots,C_h$ and $\eta_1,\dots,\eta_h$ be as in Corollary~\ref{l:dualbasis}. Let $\gamma_1,\dots,\gamma_h$ be loops in $X^{\an}$ whose homology classes are $C_1,\dots,C_h$, respectively.
The following formula holds: for $x\in X(\C_p)$, pick a path $\gamma$ in $X^{\an}$ with $\gamma(0)=x_0$ and $\gamma(1)=x$, then
\[\BCint_\gamma\omega-\Abint_{x_0}^x\omega=\sum_i \left(\BCint_{\gamma_i} \omega\right)\left(\tint_{\tau(\gamma)} \eta_i\right)\]
for every holomorphic $1$-form $\omega$. 
\end{thm}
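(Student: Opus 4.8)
The plan is to pull the identity back from the Jacobian $J$ via the Abel--Jacobi map $\iota\colon X\to J$ and reduce it to the comparison of logarithms on the Raynaud uniformization of $A=J$. First I would use that $\iota^*\colon\Omega^1(J)=\Omega^1_{\iinv}(J)\to\Omega^1(X)$ is an isomorphism to write $\omega=\iota^*\omega_J$ for a unique invariant form $\omega_J$ on $J$. Both integration theories are functorial under $\iota$: the Berkovich--Coleman integral by its defining change-of-variables property, so $\BCint_\gamma\omega=\BCint_{\iota(\gamma)}\omega_J$; and the abelian integral by its very definition as the pullback along $\iota$, so $\Abint_{x_0}^x\omega=\Abint_0^{\iota(x)}\omega_J$ after normalizing $\iota(x_0)=0$. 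This transports the entire left-hand side to the abelian variety $A=J$.

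Next I would lift to the universal cover $\pi\colon E^{\an}\to A^{\an}$. Let $\tilde\gamma$ be the lift of $\iota(\gamma)$ beginning at the identity, with endpoint $\tilde x$ over $\iota(x)$. Since $E^{\an}$ is simply connected, the Berkovich--Coleman integral there is path-independent, and functoriality under $\pi$ gives $\BCint_{\iota(\gamma)}\omega_J=\BCint_0^{\tilde x}\omega_J=\Log_{\BC}(\tilde x)(\omega_J)$; similarly $\Abint_0^{\iota(x)}\omega_J=\Log_{\Ab}(\tilde x)(\omega_J)$ straight from the definitions of the two logarithms. Hence the left-hand side equals $(\Log_{\BC}-\Log_{\Ab})(\tilde x)(\omega_J)$, which by the factorization $\Log_{\BC}-\Log_{\Ab}=L\circ\trop$ equals $L(\trop(\tilde x))(\omega_J)$ for the linear map $L$.

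The crux is then to identify $\trop(\tilde x)\in N_\Q$ in tropical terms. Using the chain of isomorphisms $N_\R\cong\trop(M')\otimes\R\cong H_1(X^{\an};\R)\cong H_1(\Gamma;\R)\cong\Omega^1_{\trop}(\Gamma)^*$, the last map being $\mu$, I would show that under this identification $\trop(\tilde x)$ corresponds to the tropical functional $\tilde\beta$ attached to the lift of $\tau(\gamma)$ to $\tilde\Gamma$, namely $[\eta\mapsto\tint_{\tau(\gamma)}\eta]$. This is forced by the commuting square $\bar\tau\circ\iota=\beta\circ\tau$ together with the fact that both $\trop|_{E^{\an}}$ and $\tilde\beta$ are the canonical lifts sending base-points to $0$: uniqueness of lifts to covers promotes the downstairs diagram to the universal covers, giving $\trop(\tilde x)=\tilde\beta(\widetilde{\tau(x)})$. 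Letting $\tilde C_i\in N_\Q$ denote the image of $C_i$ and using that $\{\mu(C_i)\}$ and $\{\eta_j\}$ are dual bases (Corollary~\ref{l:dualbasis}), expanding $\trop(\tilde x)$ in the basis $\{\mu(C_i)\}$ yields
\[\trop(\tilde x)=\sum_i\Big(\tint_{\tau(\gamma)}\eta_i\Big)\tilde C_i.\]
Verifying that tropicalization of lifts in $E^{\an}$ intertwines with tropical integration of the corresponding lifts in $\tilde\Gamma$ is the step I expect to be the main obstacle, and it is exactly where the Mikhalkin--Zharkov description of the tropical Abel--Jacobi map and the Baker--Rabinoff commuting diagram enter.

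Finally I would apply the $\R$-linear extension of $L$ and invoke Lemma~\ref{FormulaForL}: each $\tilde C_i$ corresponds to $\iota_*C_i=[\iota(\gamma_i)]\in H_1(A^{\an};\Z)$, whence $L(\tilde C_i)(\omega_J)=\BCint_{\iota(\gamma_i)}\omega_J=\BCint_{\gamma_i}\omega$ for the chosen loop $\gamma_i$ with class $C_i$. Assembling the pieces gives
\[\BCint_\gamma\omega-\Abint_{x_0}^x\omega=L(\trop(\tilde x))(\omega_J)=\sum_i\Big(\BCint_{\gamma_i}\omega\Big)\Big(\tint_{\tau(\gamma)}\eta_i\Big),\]
as claimed.
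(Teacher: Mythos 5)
Your proposal is correct and follows essentially the same route as the paper's proof: pass to the Raynaud uniformization $E^{\an}$ of $J^{\an}$, use the factorization $\Log_{\BC}-\Log_{\Ab}=L\circ\trop$, identify $\trop(\tilde x)$ with $\tilde\beta$ of the lifted endpoint via the (lifted) Baker--Rabinoff commuting square, expand in the dual bases of Corollary~\ref{l:dualbasis}, and conclude with Lemma~\ref{FormulaForL}. The only cosmetic difference is that the paper packages the functoriality and lifting steps you spell out individually into a single commutative diagram of universal covers, including the step you flag as the main obstacle.
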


\begin{proof}
 Let $\widetilde{X^{\an}}$ be the topological universal cover of $X^{\an}$. We have a commutative diagram
 \[\xymatrix{
\widetilde{X^{\an}}(\C_p)\ar[r]^>>>>{\tilde{\iota}}\ar[d]^{\tau}&\widetilde{J^{\an}}(\C_p)=E(\C_p)\ar[d]^{\bar{\tau}}\ar[rrr]^>>>>>>>>>>>>>{\Log_{\BC}-\Log_{\Ab}}&&&\Lie(E)\\
\tilde{\Gamma}\ar[r]^{\tilde{\beta}}&\tilde{\Sigma}=N_\R\ar[urrr]^L&&&\ 
}
\]
Now, consider the image of the lift $\tilde{\gamma}(1)$ of $x$ under the maps in the top row, evaluated on $\omega$. It suffices to show that $L\circ\tilde{\beta}\colon \tilde{\Gamma}\to \Lie(E)=\Omega^1(X^{\an})^*$ is given by
\[\tilde{Q} \mapsto \left[\omega\mapsto \sum_i \left(\BCint_{\gamma_i} \omega\right)\left(\tint_{\tilde{P}_0}^{\tilde{Q}} \eta_i\right)\right].\]
Under the identification $\Omega_{\trop}^1(\Gamma)^*\cong H_1(\Gamma;\R)$,
we claim that for $\tilde{Q}\in\tilde{\Gamma}$, we have
\[\tilde{\beta}(\tilde{Q})=\sum_i \left(\tint_{\tilde{P}_0}^{\tilde{Q}} \eta_i\right) C_i.\]
This is true after evaluating by $\eta_j\in \Omega^1_{\trop}(\Gamma)\cong H^1(\Gamma;\R)\cong H_1(\Sigma;\R)^*\cong M\otimes\R$:
\[\eta_j({\tilde{\beta}(\tilde{Q}))} =\tint_{\tilde{P}_0}^{\tilde{Q}} \eta_j
=\sum_i \left(\tint_{\tilde{P}_0}^{\tilde{Q}} \eta_i\right) \eta_j(C_i).\]
Because the $\eta_j$'s form a basis for $\Omega^1_{\trop}(\Gamma)$, the claim follows.

Applying $L$, we see 
\begin{eqnarray*}
L(\tilde{\beta}(\tilde{Q}))&=&
\left[\omega\mapsto \sum_i \Bigg(\tint_{\tilde{P}}^{\tilde{Q}} \eta_i\Bigg) \Bigg(L(C_i)(\omega)\Bigg)\right]\\
&=&
\left[\omega\mapsto \sum_i \Bigg(\tint_{\tilde{P}}^{\tilde{Q}} \eta_i\Bigg) \Bigg(\BCint_{\gamma_i}\omega\Bigg)\right]\\ 
\end{eqnarray*} by Lemma~\ref{FormulaForL}.
\end{proof}

\section{Coverings of Curves}

\subsection{Semistable Coverings}

Given a finite set of $\C_p$-points $S$ on $\P^1$, we will define a covering of $\P^{1,\an}$ by rational basic wide opens with respect to $S$. This will allow us to define a covering of the hyperelliptic curve $y^2=f(x)$ by hyperelliptic basic wide opens when we set $S$ to be the {\em roots} of $f(x)$. 

We follow \cite{coleman_shalit88:p_adic_regulator,CMStableReduction} in using the notion of semistable covering. 

\begin{defn} Let $Y$ be a smooth, compact, connected analytic curve over $\C_p$. A \defi{covering} $\cC$ of $Y$
is an admissible finite covering by distinct wide open subspaces of $Y$. The \defi{dual graph} $\Gamma(\cC)$ of the covering is a finite graph whose vertices correspond to elements of $\cC$ such that the edges between $U$ and $V$ correspond to components of $U\cap V$ while the self-edges at $U$ correspond to ordinary double-points in the reduction of $U$. The covering is said to be \defi{semistable} if, in addition,
\begin{enumerate}
    \item  If $U,V,W\in \cC$ then $U$ is disconnected from every component of $V\cap W$,
    \item If $U\in \cC$ then $U^\circ=U\setminus \bigcup_{V\neq U} V$ is a non-empty affinoid subdomain in $U$ whose reduction $U^\circ_{\bk}$ is absolutely irreducible, reduced, and has no singularities except ordinary double-points, and
    \item The genus of $Y$ obeys
    \[g(Y)=\sum_{U\in \cC} g(U^{\circ}_{\bk}) + b_1(\Gamma(\cC))\]
    where $b_1(\Gamma(\cC))$ is the first Betti number of $\Gamma(\cC)$. \label{i:genuscondition}
\end{enumerate}
We say that an element $U\in \cC$ is \defi{good} with respect to a subset $S\subset Y(\C_p)$ if there is an embedding into a compact good reduction curve, $\iota\colon U\to Y_U$ such that the points of $\iota(S\cap U(\C_p))$ lie in distinct residue discs. We say $\cC$ is \defi{good} with respect to $S$ if each element of $S$ belongs to at most one element of $\cC$ and each $U\in \cC$ is good with respect to $S$. The dual graph $\Gamma(\cC,S)$ of the covering with respect to $S$ is obtained from $\Gamma(\cC)$ by attaching half-open edges corresponding to elements of $S$ to the vertices corresponding to the elements of $\cC$ containing them.
\end{defn}

\subsection{Rational Coverings}

We discuss the existence of good semistable coverings of $\P^{1,\an}$ with respect to a given set of points $S\subset \P^1(\C_p)$. If $S$ contains only one element, then this is clear. From now on, let us assume that $S$ has at least two elements.

\begin{thm} \label{t:goodcover} Let $S\subset \P^1(\C_p)$ be a finite set. There is a good semistable covering $\cC$ of $\P^{1,\an}$ with respect to $S$. 
\end{thm}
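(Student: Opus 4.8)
The plan is to induct on $|S|$, peeling off one level of the $p$-adic clustering of $S$ at each step. Since $S$ is finite and $\C_p$ is infinite, I first apply an automorphism of $\P^1$ to assume $\infty\notin S$, so that $S\subset\A^1(\C_p)$. In the ultrametric setting there is a smallest closed disc $\overline{B}(a_0,r_0)$ containing $S$, with $r_0=\max_{a,b\in S}\|a-b\|_p$; composing with the affine automorphism $z\mapsto(z-a_0)/\varpi$ for any $\varpi$ with $\|\varpi\|_p=r_0$, I may assume $S\subset\overline{B}(0,1)$ and $\operatorname{diam}(S)=1$. The point of this normalization is that two points of $S$ now lie at distance exactly $1$, so the reductions $\{\bar a:a\in S\}$ consist of at least two distinct points of $\P^1(\bk)$.

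Next I partition $S$ according to reduction: $S=\bigsqcup_j S_j$, where $S_j$ is the set of points reducing to $\bar c_j$ and $\bar c_1,\dots,\bar c_m$ (with $m\ge 2$) are the distinct reductions. For each $j$ with $|S_j|\ge 2$, let $\overline{D}_j$ be a closed disc strictly inside the residue disc about $\bar c_j$ containing all of $S_j$ (for instance, slightly enlarge the minimal enclosing disc of $S_j$ so that $B(\bar c_j,1)\setminus\overline{D}_j$ is a nonempty open annulus). I then set
\[
U_\emptyset=\P^{1,\an}\setminus\bigcup_{j:\,|S_j|\ge 2}\overline{D}_j,
\]
which is a rational basic wide open: it is $\P^1$ minus finitely many closed discs lying in distinct residue discs. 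When no $S_j$ has at least two points, the union is empty, $U_\emptyset=\P^{1,\an}$, and $\cC=\{U_\emptyset\}$ already works, giving the base case of the induction. Otherwise, because $m\ge 2$, each $S_j$ with $|S_j|\ge 2$ is a proper subset of $S$, so $|S_j|<|S|$, and I may recursively produce a good semistable covering of $\P^{1,\an}$ with respect to $S_j$; since the normalization of $S_j$ places its minimal enclosing disc at the residue disc $B(\bar c_j,1)$, the central piece of that recursive covering is $\P^1$ minus discs nested strictly inside $B(\bar c_j,1)$, and it glues to $U_\emptyset$ along the annulus $B(\bar c_j,1)\setminus\overline{D}_j$. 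Collecting $U_\emptyset$ together with all the recursively produced wide opens (transported into the appropriate residue discs) gives the candidate covering $\cC$.

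It remains to verify that $\cC$ is good and semistable. Goodness is immediate: each wide open in $\cC$ is a complement of closed discs inside the ambient good reduction curve $\P^1$, and by construction the points of $S$ lying in a given wide open reduce to distinct residues, hence sit in distinct residue discs; moreover each point of $S$ is separated at exactly one level of the clustering, so it lies in exactly one element of $\cC$. For semistability, condition (2) holds because each $U^\circ$ is $\P^1$ minus finitely many closed discs and finitely many boundary annuli, whose reduction is a smooth, hence absolutely irreducible and reduced, rational curve with no singular points. Condition (1) follows from the nested-disc structure: two distinct wide opens meet only when adjacent in the clustering tree, and then their intersection is a single annulus contained in the disc separating the two levels, which is disjoint from every other wide open. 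Condition (3), the genus formula, reduces to showing $b_1(\Gamma(\cC))=0$, i.e. that $\Gamma(\cC)$ is a tree; this is forced by the construction, since the dual graph is obtained from the central vertex by attaching the inductively tree-shaped dual graphs of the recursive coverings along single edges, one per removed disc, creating no cycles. As $\P^1$ has genus $0$ and each $g(U^\circ_{\bk})=0$, the formula $0=\sum_U 0 + 0$ holds.

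The main obstacle I expect is the bookkeeping in the gluing and in verifying conditions (1) and (2) precisely: one must choose the enlarged discs $\overline{D}_j$ and the annular overlaps so that distinct levels overlap in genuine positive-modulus annuli and so that triple overlaps are empty, and one must confirm that transporting the recursive coverings into the residue discs via the normalizing automorphisms is compatible with these choices. The conceptual content, namely that rescaling spreads $S$ into at least two residue classes, driving the induction and forcing the dual graph to be a tree, is straightforward; the labor lies in the disc-and-annulus combinatorics, which is routine in the ultrametric setting.
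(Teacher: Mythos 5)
Your overall strategy --- normalize so that $S$ has diameter $1$, cluster by reduction, and recurse on the clusters --- is the same as the paper's, but your induction is formulated on the wrong statement, and this produces a genuine gap at the gluing step. You recursively invoke the theorem itself to get a good semistable covering of all of $\P^{1,\an}$ with respect to $S_j$, and then propose to ``transport it into the appropriate residue disc.'' That cannot be done as stated: the central element of any covering of $\P^{1,\an}$ is the complement of finitely many closed discs, and any automorphism of $\P^1$ carries it to another complement of closed discs, never into $B(\bar c_j,1)$. So the transported central piece for the cluster $S_j$ contains everything outside a small disc around $c_j$; in particular it contains all the points of $S\setminus S_j$, violating the requirement in the definition of goodness that each element of $S$ belong to at most one element of $\cC$, and it meets $U_\emptyset$ and all the other transported coverings in large non-annular regions with nonempty triple overlaps, so conditions (1) and (2) of semistability fail as well. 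Your assertion that adjacent pieces ``glue along the annulus $B(\bar c_j,1)\setminus\overline{D}_j$'' silently replaces each central piece by its intersection with a disc --- an object your inductive hypothesis does not provide, since a covering of $\P^{1,\an}$ cannot simply be cut down without reproving all the conditions.

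The repair is exactly the paper's Lemma~\ref{t:goodcoverlemma}: run the induction on good semistable coverings of \emph{open discs} $B(\beta,R)$, with the genus condition (3) replaced by the requirement that the dual graph be a tree (an open disc is not a compact curve, so the notion of semistable covering must be adapted). Then the covering attached to each cluster is genuinely contained in its residue disc, its outermost element is a disc minus closed discs rather than all of $\P^{1,\an}$ minus discs, and it meets the parent element in a single annulus, which is what your verification paragraph implicitly assumes. The unbounded element $\P^{1,\an}\setminus\overline{B}(0,r)$ then enters exactly once, at the top level, in passing from the lemma to the theorem. With this change of inductive statement the rest of your argument is sound and coincides with the paper's proof: the normalization forcing at least two residue classes, the choice of annular buffers around each cluster, the base case, and the observation that the nested-disc structure forces $b_1(\Gamma(\cC))=0$ are all as in the paper.
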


We will prove Theorem~\ref{t:goodcover} by an inductive argument making use of Lemma~\ref{t:goodcoverlemma}. We will produce a dual graph attached to the covering as we proceed. To do so, we introduce semistable coverings of open discs by rational basic wide opens. They are defined as above except condition \eqref{i:genuscondition} is replaced by the condition
\[g(Y)=\sum_{U\in \cC} g(U) + b_1(\Gamma(\cC))\]
where $g(U)=0$ for a rational basic wide open $U$. This mandates that $\Gamma(\cC)$ is a tree. The embeddings $\iota\colon U\to \P^{1,\an}$ will be linear fractional transformations.

\begin{lem} \label{t:goodcoverlemma}
Let $R\in \|\C_p^*\|_p$ and $\beta\in\A^1(\C_p)$. Set $Y=B(\beta,R)$. For any non-empty finite subset $S$ of $Y(\C_p)$, there is a good semistable covering $\cC_Y$ of $Y$ with respect to $S$. The dual graph of the covering respect to $S$ is a rooted tree $T_Y$.
\end{lem}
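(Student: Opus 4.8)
The plan is to induct on the cardinality $|S|$, building $\cC_Y$ from the ultrametric cluster structure of $S$ inside the open disc $Y=B(\beta,R)$. In the base case $|S|=1$ I would take $\cC_Y=\{Y\}$: since $Y$ is itself an open disc, it is a rational basic wide open of genus $0$ with a single end, and the embedding $Y\hookrightarrow\P^{1,\an}$ exhibits it as good with respect to its one point. The dual graph is then a single root vertex carrying one half-open edge, which is a rooted tree.

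For the inductive step (with $|S|\ge 2$) I would first re-center and measure the spread of $S$. Fix $s_0\in S$; since $s_0\in B(\beta,R)$, the ultrametric gives $Y=B(s_0,R)$. Set $\rho=\max_{s,s'\in S}\|s-s'\|_p$; as $S$ is finite and lies in the open disc, this maximum is attained and satisfies $\rho<R$. Declare two points of $S$ to lie in the same cluster when their distance is strictly less than $\rho$, obtaining clusters $S_1,\dots,S_k$ with representatives $a_j\in S_j$. Two points realizing the diameter fall in different clusters, so $k\ge 2$ and each $|S_j|<|S|$; moreover points in distinct clusters are at distance exactly $\rho$, so the open discs $B(a_j,\rho)$ are pairwise disjoint. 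For each $j$ I would choose a radius $r_j$ with $\mathrm{diam}(S_j)\le r_j<\rho$ so that $\overline{B}(a_j,r_j)$ contains $S_j$ and sits strictly inside $B(a_j,\rho)$. The top piece is $U_0=Y\setminus\bigcup_{j=1}^{k}\overline{B}(a_j,r_j)$, and the remaining pieces come from applying the inductive hypothesis to each pair $(Y_j,S_j)$ with $Y_j=B(a_j,\rho)$, producing rooted trees $T_{Y_j}$.

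To see this is a good semistable covering I would rescale by a scalar of absolute value $\rho$ about $s_0$: this carries $\overline{B}(s_0,\rho)$ to the closed unit disc, $Y$ to an open disc of radius $R/\rho>1$, and each removed disc $\overline{B}(a_j,r_j)$ to a closed disc of radius $r_j/\rho<1$, with the $k$ clusters reducing to $k$ distinct points of $\bar{\F}_p$. After compactifying, $U_0$ becomes $\P^1$ with $k+1$ closed discs deleted (the $k$ finite ones together with the disc around $\infty$ that already expresses $Y$ as a wide open), all in distinct residue discs; hence $U_0$ is a rational basic wide open of genus $0$ embedded linearly into $\P^{1,\an}$. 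Since $S\subset\bigcup_j\overline{B}(a_j,r_j)$, we have $S\cap U_0=\emptyset$, so $U_0$ is trivially good, while each $Y_j$ is good by induction. The only removed disc meeting $Y_j$ is $\overline{B}(a_j,r_j)$, so $U_0\cap Y_j=\{r_j<\|z-a_j\|_p<\rho\}$ is a single open annulus; this makes $\Gamma(\cC_Y,S)$ the tree obtained by joining the root $U_0$ to each $T_{Y_j}$ by one edge and carrying along the half-open edges of $S$. The genus identity is automatic, since $g(Y)=0$, every $g(U)=0$, and $b_1$ of a tree vanishes.

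The main obstacle, and the step I would treat most carefully, is bookkeeping the open-versus-closed radii and the rescaling embeddings so that the removed discs genuinely land in distinct residue discs of the good reduction model and the overlaps are genuinely annuli. Once these local normalizations are pinned down, the remaining semistability conditions are routine: nonemptiness and good reduction of each underlying affinoid $U^\circ$ (its reduction being $\P^1_{\bar{\F}_p}$ with finitely many points deleted, hence absolutely irreducible, reduced, and smooth), and the disjointness condition that no $U\in\cC_Y$ meets a component of an intersection $V\cap W$, both of which follow from the ultrametric geometry together with the strict decrease of $|S|$ in the recursion.
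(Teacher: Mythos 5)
Your proposal is correct and takes essentially the same route as the paper's proof: an induction on $|S|$ in which the paper first translates and rescales so that the diameter of $S$ is $1$ and then clusters by residue disc, which after your rescaling by $\rho$ is exactly your clustering at distance $<\rho$, with the root piece obtained by deleting a small closed disc around each cluster and the recursion applied inside the surrounding open discs. The only (harmless) deviations are cosmetic: you recurse on singleton clusters too, creating extra leaf vertices where the paper keeps those points in the root piece $U$ and attaches their half-open edges there directly, and you use $|S|=1$ rather than $|S|\le 2$ as the base case.
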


\begin{proof}
Write $S=\{\alpha_1,\dots,\alpha_n\}$.
By translating by $-\alpha_1$, we may assume that $\alpha_1=0$. If the set $S$ has at least two elements, by scaling, we may assume that $\max_{i\neq 1}(\|\alpha_i\|_p)=1$.

We induct on $n$. If $n\leq 2$, then all the points of $S$ are in distinct residue discs, and we may let $\cC_Y=\{Y\}$. The tree attached to this covering is a single vertex with a half-open edge for each element of $S$.

Let $n>2$. Not all elements of $S$ are in a single residue disc. Let $I_1,\dots,I_m$ be the partition of $S$ according to which residue disc a point belongs. For each $i$ such that $|I_i|\geq 2$ pick a point $\beta_i\in I_i$. Let $R_i$ be the largest element of $\|\C_p^*\|_p$ such that $B(\beta_i,R_i)\cap S=I_i$ (so that $\overline{B}(\beta_i,R_i)$ contains some point of $S\setminus I_i$). Set $Y_i=B(\beta_i,R_i)$ and $S_i=I_i$. Because $|I_i|<n$, $Y_i$ has a good semistable covering $\cC_{Y_i}$ with respect to $S_i$. Now, let 
\[U=Y\setminus \bigsqcup_{i: |I_i|\geq 2} \overline{B}(\beta_i,r_i)\] 
where $r_i$ is the smallest element of $\|\C_p^*\|_p$ such that $\overline{B}(\beta_i,r_i)\cap S=I_i$. The covering $\cC_Y$ is defined as 
\[\{U\}\cup \bigcup_{i: |I_i|\geq 2} \cC_{Y_i}.\] 
Because there is at most one element of $S$ in every residue disc of $U$, $U$ is good with respect to $S$. Moreover, 
since each element of $S$ is either contained in $U$ or in exactly one element of the covering $\cC_{Y_i}$ for some $i$, the covering 
$\cC_Y$ is good with respect to $S$.

Denote the rooted tree corresponding to the covering $\cC_{Y_i}$ by $T_{Y_i}$. Consider the tree whose root is $U$ and where $U$ is connected to the roots of $T_{Y_i}$ for each $i$ with $|I_i|\geq 2$. To obtain $T_Y$ from this tree, attach to $U$ half-open edges corresponding to $I_i$ with $|I_i|=1$. These half-open edges correspond to the points of $S$ that are contained in $U$. 
\end{proof}

\begin{proof}[Proof of Theorem~\ref{t:goodcover}] 
Write $S=\{\alpha_1,\dots,\alpha_n\}$. 
Let $S'$ be the elements of $S$ contained in $\A^1(\C_p)$, and let $r$ be the maximum of their $p$-adic absolute values.
Pick $R\in \|\C_p^*\|_p$ with $R>r$ and set $Y=B(0,R)$. Using Lemma~\ref{t:goodcoverlemma}, find a good semistable covering $\cC_Y$ of $Y$ with respect to $S'$ and its rooted tree $T_Y$. This covering together with $U=\P^{1,\an}\setminus \overline{B}(0,r)$ is our desired covering $\cC$. 

The dual graph $\Gamma(\cC,S)$ is obtained by adjoining the vertex corresponding to $U$ to the root of $T_Y$ and then attaching the half-open edge corresponding to $\infty$ to $U$ if $\infty\in S$.
\end{proof}

One can see from general considerations or by examining the above construction that the intersection of two distinct elements of the semistable cover is either empty or an annulus.

Notice that the dual graph $\Gamma(\cC,S)$ is also a tree. From now on, we will denote this tree by $T$.

\begin{rem} The covering and graph can be constructed intrinsically using Berkovich spaces \cite{baker_payne_rabinoff13:analytic_curves}.
The tree $T$ consists of the type $II$ and type $III$ points of $\P^{1,\an}$ corresponding to discs of the form $\overline{B}(\alpha,r)$ 
for $\alpha\in \P^1(\C_p)$ such that at least one point of $S$ is contained in each of $\overline{B}(\alpha,r)$ and $\P^{1,\an}\setminus B(\alpha,r)$.

The covering can be obtained by taking a semistable vertex set as in \cite{baker_payne_rabinoff13:analytic_curves}. It consists of the type $II$ points of the form $\overline{B}(\beta_i,r_i)$ as constructed in the above algorithm.
\end{rem}

The following is straightforward:

\begin{lem} The dual graph $T$ is a graph structure on the skeleton of $\P^{1,\an}\setminus S$.
\end{lem}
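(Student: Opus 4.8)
The plan is to exhibit both the dual graph $T$ and the skeleton of $\P^{1,\an}\setminus S$ as the same subset of the Berkovich tree $\P^{1,\an}$, and then to check that the combinatorial vertices and edges of $T$ realize a graph structure on it. First I would recall from \cite{baker_payne_rabinoff13:analytic_curves} that $\P^{1,\an}$ is an $\R$-tree whose ends are the type $I$ points $\P^1(\C_p)$ and whose type $II$ and type $III$ points correspond to closed discs $\overline{B}(\alpha,r)$; removing such a point $x=\overline{B}(\alpha,r)$ disconnects $\P^{1,\an}$ into the outward component $\P^{1,\an}\setminus\overline{B}(\alpha,r)$ together with the open discs hanging below $x$. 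The skeleton of $\P^{1,\an}\setminus S$ is, by the theory of skeleta of punctured curves, the convex hull of $S$ in this tree, namely the union of the geodesic segments between pairs of points of $S$, with the type $I$ points of $S$ themselves deleted so that they appear as open ends.

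The second step is to match the two point sets. A point $x=\overline{B}(\alpha,r)$ lies on the geodesic joining two points $s,s'\in S$ exactly when $s$ and $s'$ lie in distinct directions at $x$, i.e. when $S$ meets at least two connected components of $\P^{1,\an}\setminus\{x\}$. I would check that, for the discs actually occurring in our construction, this is precisely the condition recorded in the Remark preceding the statement: that $S$ meet both $\overline{B}(\alpha,r)$ and $\P^{1,\an}\setminus B(\alpha,r)$. Indeed, along each edge of $T$ one has a disc $\overline{B}(\beta_i,\rho)$ with $r_i\le\rho<R_i$ containing $I_i$ inside and a point of $S\setminus I_i$ strictly outside, so it separates $S$; conversely a point separating $S$ is of this form. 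This identifies the underlying topological space of $T$ with the convex hull of $S$, hence with the skeleton.

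It then remains to promote this to an equality of graph structures. The vertices of $T$ are the type $II$ points $\overline{B}(\beta_i,r_i)$ produced in Lemma~\ref{t:goodcoverlemma}, which are points of the skeleton; two distinct elements $U,V$ of the cover meet in an annulus, as observed after the proof of Theorem~\ref{t:goodcover}, and the skeleton of such an annulus is the open geodesic segment joining the corresponding vertices, giving the edges of $T$. The half-open edges attached for the points of $S$ are the geodesic rays running from a vertex out to each puncture, i.e. the open ends of the skeleton. Finally, since each $U^{\circ}$ has good reduction of genus $0$ it contributes a single vertex and no loops, consistent with $T$ being a tree, so no part of the skeleton is missed.

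The step I expect to be the main obstacle is the point-set matching of the second paragraph. One must reconcile the slightly asymmetric phrasing of the Remark — an open disc on one side and a closed disc on the other — with honest membership in the convex hull at every stratum (type $II$ branch vertices, type $III$ interior points of edges, and, most delicately, points lying over a boundary circle that happens to contain an element of $S$), and verify that the way the radii $r_i<R_i$ are selected in the proof of Lemma~\ref{t:goodcoverlemma} rules out the degenerate configurations in which the literal condition would include a disc not on any geodesic between two points of $S$. One must also confirm that the annuli of pairwise intersection are matched \emph{isometrically}, not merely homeomorphically, with the connecting segments of the skeleton.
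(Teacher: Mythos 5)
Your proposal does not match the paper's ``proof,'' because the paper gives none: it declares the lemma straightforward and, in Remark~\ref{rmk:coveringtoskeleton}, merely points to two citation-level routes --- identify the skeleton via a semistable vertex set as in \cite{baker_payne_rabinoff13:analytic_curves}, or build a semistable model from the covering via \cite[Theorem~1.2]{CMStableReduction} and read off its skeleton. What you wrote is a genuine fleshing-out of the first route, with the skeleton realized as the convex hull of $S$ inside the $\R$-tree $\P^{1,\an}$, and its architecture (point-set matching, vertices at the algorithm's type~II points, edges as skeletons of the intersection annuli, rays to the punctures) is sound. Your flagged obstacle about the asymmetric separating condition is real: as literally stated in the remark preceding the lemma, the condition is too weak, because a point of $S$ lying on the bounding circle is counted on both sides. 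Concretely, for $S=\{1,1+p\}$ the Gauss point $\overline{B}(0,1)$ satisfies ``$S$ meets both $\overline{B}(0,1)$ and $\P^{1,\an}\setminus B(0,1)$,'' yet it does not lie on the geodesic from $1$ to $1+p$. The correct criterion is that $S$ meet at least two connected components of $\P^{1,\an}\setminus\{x\}$, and this does hold at the vertices $\overline{B}(\beta_i,r_i)$ produced by Lemma~\ref{t:goodcoverlemma}: minimality of $r_i$ forces some point of $I_i$ to lie at distance exactly $r_i$ from $\beta_i$, hence in a different direction at the vertex than $\beta_i$ itself, so these vertices do lie in the hull.

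There is, however, one configuration in which your central claim ``the underlying space of $T$ equals the convex hull of $S$'' genuinely fails: when $\infty\notin S$. The proof of Theorem~\ref{t:goodcover} always adjoins $U=\P^{1,\an}\setminus\overline{B}(0,r)$ as an extra vertex joined by an edge to the root of $T_Y$, and (since after the normalization $0\in S$ and $r$ equals the maximal pairwise distance in $S$) the top of the hull is already $\overline{B}(0,r)$; the vertex for $U$ and its edge are realized strictly beyond it, toward $\infty$. For $S=\{0,1\}$, say, the hull is the geodesic from $0$ to $1$ through the Gauss point, while $T$ has a second vertex and a protruding tail. So $T$ is a graph structure not on the minimal skeleton but on the skeleton of the semistable model associated with the covering, which is the hull together with such contractible tails --- which is exactly why the paper's second route is the airtight one: there the skeleton is the model's by definition and no matching is needed. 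To repair your argument, either assume $\infty\in S$ (the odd-degree case), prune or absorb the root vertex, or read ``the skeleton'' in the model sense. Finally, your isometry worry can be dropped for this lemma: the paper deliberately assigns every edge length $1$ (see the tropical integration subsection), so only a topological graph structure is asserted, and the later comparison formula is formulated through the dual bases of Corollary~\ref{l:dualbasis} within that convention.
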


\begin{rem} \label{rmk:coveringtoskeleton}
The above lemma can be proved in several ways. One can use the semistable vertex set to identify the skeleton as in \cite[Section~3]{baker_payne_rabinoff13:analytic_curves}. Alternatively, one can construct a semistable model from the semistable covering \cite[Theorem~1.2]{CMStableReduction} and obtain the skeleton by \cite[Section~4]{baker_payne_rabinoff13:analytic_curves}.
\end{rem}

Let $f(x)$ be a non-constant polynomial with coefficients in $\bK$. We define the \defi{roots} of $f(x)$ to be the usual zeroes of $f(x)$ together with $\infty$ if $f(x)$ has odd degree and write the set of roots as $S_f$. Lemma~\ref{t:goodcoverlemma} and Theorem~\ref{t:goodcover} can be turned into an algorithm for constructing a good covering of $\P^{1,\an}$ with respect to $S_f$. A priori, it looks as it would be necessary to exactly solve for the roots of $f(x)$. However, this can be avoided. First, we can approximate a root and use a translation to put it in $B(0,1)$. Then, we can find the valuation of the roots by using the theory of Newton polygons. We can rescale $x$ by an element of $\C_p$ to make sure that the largest absolute value of the roots is $1$. Indeed, the polynomial $f(x)$ has a root of valuation $s$ if and only if its Newton polygon has a segment of slope $-s$. This segment corresponds to roots of $p$-adic absolute value equal to $p^{-s}$.  After these reduction steps, the methods in the proof of Lemma~\ref{t:goodcoverlemma} are still applicable. 

\smallskip

\begin{algorithm}[H] \label{coversofP1}

\caption{\bf Covering of $B(0,R)$ with respect to the roots of a polynomial}

\KwIn{
\begin{itemize}
    \item A polynomial $f\in \bK[x]$ of degree at least $2$ whose finite roots are in $\bK$.
    \item A disc $B(0,R)$. 
\end{itemize}
}

\KwOut{A good semistable covering $\cC$ of $B(0,R)$ with respect to the roots of $f(x)$ contained in $B(0,R)$ together with its rooted tree $T$.}

\begin{enumerate}
    \item Find $\alpha\in B(0,R)$ sufficiently close to a root of $f(x)$ such that the polynomial $f(x-\alpha)$ has roots of at least two different $p$-adic absolute values in $B(0,R)$.\footnotemark~ Replace $f(x)$ by $f(x-\alpha)$. 
    \item Compute the slopes of the Newton polygon of $f(x)$ and set $\lambda$ to be the maximum of the slopes less than or equal to $\log_p(R)$.
    \item Pick $c\in\bK$ satisfying $v_p(c)=-\lambda$. Replace $f(x)$ by $f(cx)$ and $R$ by $\frac{R}{p^{\lambda}}$
    so that the maximum of $p$-adic absolute value of roots in the disc is $1$.
    \item Multiply $f(x)$ by a power of a uniformizer of $K$ to ensure that the minimum of the valuation of the coefficients of $f(x)$ is $0$. Factor the polynomial $f(x)\ \text{mod}\ p$ to determine the partition $\{I_i\}$ of the set of roots of $f(x)$ in $B(0,R)$ according to which residue disc the root belongs.
    \item For each $i$ with $|I_i|\geq 2$, pick a point $\beta_i$ in the same residue disc as the points in $I_i$. Set $f_i(x) = f(x-\beta_i)$. Let $\lambda_i$ be the largest negative slope of the Newton polygon of $f_i$, and let $\Lambda_i$ be the smallest positive slope of the Newton polygon of $f_i$. Set $r_i=p^{\lambda_i}$ and $R_i=p^{\Lambda_i}$. Apply this algorithm to $f_i(x)$ and the disc $B(0,R_i)$ to find a good covering $\cC_i$ of $B(\beta_i,R_i)$ with respect to $I_i$ together with its rooted tree $T_i$.
    \item Set $U=B(0,R)\setminus \bigcup_{i: |I_i|\geq 2} \overline{B}(\beta_i,r_i)$. Combine $U$ with the coverings $\cC_i$ found in the previous step to obtain the covering $\cC$ of $B(0,R)$.
    \item Let $U$ be the root of the tree $T$; for each $i$, do the following: if $|I_i|\geq 2$, attach the root of $T_i$ to $U$ by an edge; if $|I_i|=1$, attach a half-open edge corresponding to the unique element of $I_i$.
    \item Return $\cC$ and $T$.
    \end{enumerate} 
\end{algorithm} 

\footnotetext[2]{This can be verified by checking that the Newton polygon of $f(x-\alpha)$ has at least two segments of distinct slope less than or equal to $\log_p(R)$.} 

\smallskip

Algorithm~\ref{coversofP1} produces a good semistable covering of $\P^{1,\an}$ with respect to the roots of $f(x)$, by following the proof of Theorem~\ref{t:goodcover}. 

\subsection{Hyperelliptic Coverings} \label{HyperellipticCoverings}

Let $\pi:X^\an\rightarrow \P^{1,\an}$ be the analytification of the proper hyperelliptic curve defined by $y^2=f(x)$. Using ideas similar to those of Stoll in \cite{stoll2018uniform}, we will show that any good semistable covering of $\P^{1,\an}$ with respect to the roots of $f(x)$ induces a good semistable covering of $X^{\an}$ with respect to the Weierstrass points by taking inverse images. This covering will have a nice combinatorial structure whose dual graph $\Gamma$ is a  double cover of the dual graph $T$ of the covering of $\P^{1,\an}$. 

For a polynomial $f(x)$, write the roots as $S_f=\{\alpha_1,\alpha_2,\dots,\alpha_n\}$. Recall that, if $f(x)$ is of odd degree, we follow the convention of counting $\infty$ as one of its roots. In any case, $f(x)$ has an even number of roots. If $f(x)$ has distinct roots, the curve $y^2=f(x)$ has a compactification as a nonsingular curve $X$. 
If the roots of $f(x)$ are distinct mod $p$, then $X$ has good reduction over some extension of $\Q_p$. 

We will need the following observation:
\begin{lem} \label{squarerootsofseries}
Let $R\in \|\bK^*\|_p$. Let $h(x)\in \bK[x]$ be a polynomial. Let $D_1,\dots,D_m$ be disjoint closed discs in $\overline{B}(0,R)$ the union of whose interior contains $S_h\cap \overline{B}(0,R)$. Suppose that an even number of roots of $h(x)$  is contained in each $D_i$.  Then $h(x)$ has an analytic square root on $B(0,R)\setminus \left(\bigcup_i D_i\right).$
\end{lem}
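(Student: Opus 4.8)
The plan is to show that $h$ is a unit on $W:=B(0,R)\setminus\bigl(\bigcup_i D_i\bigr)$ and then to build an explicit square root by factoring $h$ over $\C_p$ and treating several kinds of factors separately. First note that every root of $h$ either lies in some $D_i$ (hence is removed) or satisfies $\|\alpha\|_p>R$ (hence lies outside $B(0,R)$), since $S_h\cap\overline{B}(0,R)$ is contained in the union of the interiors of the $D_i$; consequently $h$ has no zeros on $W$. Writing $h(x)=c\prod_j(x-\alpha_j)$, I would group the linear factors into a leading constant $c$, the products $P_i(x)=\prod_{\alpha_j\in D_i}(x-\alpha_j)$ over the roots in each $D_i$, and the product over the roots $\alpha_k$ with $\|\alpha_k\|_p>R$. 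The evenness hypothesis enters exactly once, in the $P_i$, and the other crucial ingredient is that $p$ is odd, so that $1/2\in\Z_p$ and hence $\binom{1/2}{k}\in\Z_p$; the binomial series $(1+u)^{1/2}=\sum_k\binom{1/2}{k}u^k$ therefore converges whenever $\|u\|_p<1$, to a function whose square is $1+u$.

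For the factor at $D_i=\overline{B}(\beta_i,\rho_i)$, set $d_i=\deg P_i$. For $x\in W$ we have $\|x-\beta_i\|_p>\rho_i\ge\|\alpha_j-\beta_i\|_p$ for each $\alpha_j\in D_i$, so
\[
P_i(x)=(x-\beta_i)^{d_i}\prod_{\alpha_j\in D_i}\Bigl(1-\tfrac{\alpha_j-\beta_i}{x-\beta_i}\Bigr),
\]
where each $u_j:=(\alpha_j-\beta_i)/(x-\beta_i)$ is analytic on $W$ with $\|u_j\|_p<1$. Since $d_i$ is \emph{even}, $(x-\beta_i)^{d_i/2}$ is an honest analytic square root of the monomial factor, while $(1-u_j)^{1/2}$ is defined by the convergent binomial series above. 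For a root $\alpha_k$ with $\|\alpha_k\|_p>R$ one has $\|x/\alpha_k\|_p<1$ on $B(0,R)$, so $x-\alpha_k=-\alpha_k\bigl(1-x/\alpha_k\bigr)$ and $(1-x/\alpha_k)^{1/2}$ is again a convergent binomial series; here no parity is needed because $B(0,R)$ has no hole around $\alpha_k$. Finally I would choose constants in $\C_p$ (after a finite extension of $\bK$ if desired) whose squares are $c$ and each $-\alpha_k$, and let $g$ be the product of all these square roots; then $g^2=h$ on $W$ by construction.

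It remains to check that $g$ is genuinely analytic on the wide open $W$, which is where the only real bookkeeping lies. I would exhaust $W$ by underlying affinoids $V=\overline{B}(0,R')\setminus\bigcup_i B(\beta_i,\rho_i')$ with $R'<R$ and $\rho_i'>\rho_i$, as in the construction following the definition of underlying affinoid. On such a $V$ one has uniform bounds $\|u_j\|_p\le\rho_i/\rho_i'<1$ and $\|x/\alpha_k\|_p\le R'/\|\alpha_k\|_p<1$, so every binomial series converges uniformly on $V$ and defines an analytic function there; letting $V$ exhaust $W$ gives analyticity of $g$ on $W$. The conceptual content, and the main obstacle, is isolating the single genuine obstruction: on the annular end of $W$ encircling $D_i$ the coordinate $x-\beta_i$ has odd ``winding'' and admits no single-valued analytic square root, so a square root of $h$ can exist only when the total order $d_i$ of the roots in $D_i$ is even---precisely the hypothesis. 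Once this parity obstruction is cleared, each remaining factor is of the form $1+(\text{something of norm}<1)$, whose square root exists because $p$ is odd, and the rest is the uniform-convergence check on the exhausting affinoids.
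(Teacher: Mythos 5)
Your proposal is correct and follows essentially the same route as the paper's proof: the paper likewise factors off, for each disc $D_i=\overline{B}(\gamma,r)$ containing $2\ell$ roots, the square root $(x-\gamma)^{\ell}\prod_j\bigl(1-\frac{\alpha_j-\gamma}{x-\gamma}\bigr)^{1/2}$ (evenness entering exactly through the monomial factor), and handles the roots outside $\overline{B}(0,R)$ via $(-\alpha)^{1/2}\bigl(1-\frac{x}{\alpha}\bigr)^{1/2}$, multiplying all factors together at the end. Your added bookkeeping (the binomial coefficients $\binom{1/2}{k}\in\Z_p$ for odd $p$, the leading constant, and uniform convergence on exhausting affinoids) only makes explicit convergence claims the paper states without detail.
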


\begin{proof}
We work with one disc of $D_1,\dots,D_m$ at a time, beginning with a disc $D$. Write $D=\overline{B}(\gamma,r)$. Let  
$\alpha_1,\dots,\alpha_{2\ell}$ be the roots of $h(x)$ contained in $D$. Then,
\[
\bigg(\prod_{i=1}^{2\ell}(x-\alpha_i)\bigg)^{1/2} = (x-\gamma)^{\ell}\prod_{i=1}^{2{\ell}}\bigg(1-\frac{\alpha_i-\gamma}{x-\gamma}\bigg)^{1/2}
\]
converges away from $D$. Now, if $\alpha_1,\dots,\alpha_{\ell}\in \C_p$ are the roots of $h(x)$ in $\A^{1,\an}\setminus \overline{B}(0,R)$, then
\[
\bigg(\prod_{i=1}^{\ell}(x-\alpha_i)\bigg)^{1/2} =\prod_{i=1}^{\ell}\left((-\alpha_i)^{1/2}\left(1-\frac{x}{\alpha_i}\right)^{1/2}\right)
\]
converges on $B(0,R)$. By multiplying these functions, we get the desired square root of $h(x)$.
\end{proof}

\begin{prop} \label{p:eliminateroots} Let $U\subset \P^{1,\an}$ be a rational basic wide open that is good with respect to $S_f$, then $\pi^{-1}(U)$ is the union of at most two basic wide opens. 
\end{prop}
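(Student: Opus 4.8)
The plan is to analyze the pullback of $U$ along the degree-$2$ map $\pi\colon X^{\an}\to\P^{1,\an}$ by examining how the hyperelliptic cover behaves over $U$. Since $U$ is a rational basic wide open that is good with respect to $S_f$, by definition it embeds into a good reduction curve $Y_U\cong\P^{1,\an}$ (as it is rational) as the complement of finitely many closed discs $D_1,\dots,D_m$, each sitting in a distinct residue disc, and the points of $S_f\cap U(\C_p)$ lie in distinct residue discs. The key structural fact I would exploit is that, by the standard theory of Kummer covers, whether $\pi^{-1}(U)\to U$ is connected or splits into two copies is controlled entirely by whether $f$ admits an analytic square root on $U$: if $f|_U$ is a square, then $\pi^{-1}(U)\cong U\sqcup U$ (two components, each mapping isomorphically), while if $f|_U$ is not a square, $\pi^{-1}(U)$ is a connected double cover.

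The heart of the argument is therefore to apply Lemma~\ref{squarerootsofseries}. First I would observe that since $U$ is good with respect to $S_f$, each removed closed disc $D_i$ contains a controlled number of roots of $f$; the crucial parity question is whether each $D_i$ (and the complement of the ambient disc, accounting for roots outside, including possibly $\infty$) contains an \emph{even} number of roots. Recalling that $f$ has an even total number of roots (with the convention that $\infty$ counts as a root in odd degree), the parity of the root count in each removed disc determines the ramification behavior of $\pi$ restricted to $U$. When every removed disc contains an even number of roots, Lemma~\ref{squarerootsofseries} furnishes an analytic square root of $f$ on $U$, so $\pi^{-1}(U)$ splits as a disjoint union of two copies of $U$, each of which is a basic wide open isomorphic to $U$. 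This gives exactly two basic wide opens.

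In the remaining case, at least one removed disc contains an odd number of roots, so $f$ is not a square on $U$ and $\pi^{-1}(U)$ is connected. Here I would show that $\pi^{-1}(U)$ is itself a single basic wide open: the preimage of the good reduction curve $Y_U\cong\P^1$ under the Kummer construction $y^2=f(x)$ is again a good reduction (hyperelliptic or rational) curve $\widetilde{X}_U$, and $\pi^{-1}(U)$ is obtained from $\widetilde{X}_U$ by removing the preimages of the discs $D_i$. Over a disc containing an even number of roots, the preimage is two disjoint closed discs; over a disc containing an odd number of roots, the preimage is a single closed disc (the cover ramifies). One then checks these removed discs lie in distinct residue discs of $\widetilde{X}_U$, so $\pi^{-1}(U)$ is a basic wide open, giving a single basic wide open in this case. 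In either case $\pi^{-1}(U)$ is the union of at most two basic wide opens, as claimed.

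The main obstacle I anticipate is the bookkeeping in the second case: verifying carefully that the good reduction curve $Y_U$ lifts to a good reduction curve $\widetilde{X}_U$ upstairs and that the removed discs upstairs remain in distinct residue discs, so that $\pi^{-1}(U)$ genuinely satisfies the definition of a basic wide open. This requires tracking how the ramification of $\pi$ interacts with the reduction map and confirming that the square root of $f$ exists \emph{locally} near each removed disc even when it fails to exist globally on $U$ — which is precisely the content one extracts from the disc-by-disc construction in the proof of Lemma~\ref{squarerootsofseries}.
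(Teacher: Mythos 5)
Your plan correctly isolates the two mechanisms at play (parity of root counts in the removed discs, and Lemma~\ref{squarerootsofseries}), and your disc-by-disc identification of the preimages --- two disjoint closed discs over a disc with no ramification, one closed disc over a ramified disc --- matches the paper's computation. But your case analysis has a genuine gap: you treat only ``every removed disc has even root count'' versus ``some removed disc has odd root count,'' omitting the typical situation in which $f$ has roots \emph{inside} $U$ itself (lying in distinct residue discs, by goodness of $U$). In that case $f$ vanishes on $U$, so it admits no analytic square root no matter what the parities in the $D_i$'s are, and Lemma~\ref{squarerootsofseries} does not even apply to $f$, since it requires the interiors of the removed discs to contain all the roots in $\overline{B}(0,R)$. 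The correct criterion, noted in the paper immediately after the proposition, is that $\pi^{-1}(U)$ is disconnected exactly when $f$ has \emph{no} roots in $U$ \emph{and} an even number of roots in each deleted disc.

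The more serious gap is in your connected case, where you assert that the double cover of $Y_U\cong\P^1$ cut out by $y^2=f(x)$ is a good reduction curve $\widetilde{X}_U$. It is not: that curve is (the compactification of) the original $X$, which has bad reduction --- this is the entire difficulty. The missing step is the factorization $f=gh$, where $h$ collects an even number of roots from each removed disc together with all roots of $f$ outside $\overline{B}(0,R)$, and $g$ retains at most one root per residue disc (including any roots of $f$ inside $U$ and one leftover root from each odd-parity disc). Lemma~\ref{squarerootsofseries} then furnishes a \emph{global} analytic square root $\ell(x)$ of $h$ on $U$, and the coordinate change $\tilde{y}=y/\ell(x)$ identifies $\pi^{-1}(U)$ with $\{(x,\tilde{y})\mid x\in U,\ \tilde{y}^2=g(x)\}$, which embeds into the good reduction curve defined by $\tilde{y}^2=g(x)$ precisely because the roots of $g$ lie in distinct residue discs. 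Your proposed repair --- square roots of $f$ existing ``locally near each removed disc'' --- cannot substitute for this: local square roots near the boundary do not glue to a global embedding of $\pi^{-1}(U)$ into a single good reduction curve (the obstruction to gluing along the boundary annuli is exactly the monodromy distinguishing odd from even discs), so the bookkeeping you flag as the anticipated obstacle is in fact where the argument breaks. Once the factorization is in place, the remainder of your disc analysis goes through essentially as in the paper.
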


\begin{proof}
We will find a new coordinate $\tilde{y}$ and a polynomial $g(x)$ whose roots lie in distinct residue discs such that 
\begin{equation*} 
\pi^{-1}(U)\simeq\{(x,\tilde{y})\mid  x\in U, \tilde{y}^2=g(x)\}.
\end{equation*}

We view $U$ as a subset of $\P^{1,\an}$ where we have made a fractional linear transformation to ensure that the roots of $f(x)$ contained in $U$ are in distinct residue discs. We can suppose that either $U=\P^{1,\an}$ or that $U\subset \overline{B}(0,R)$ for some $R\in \|\bK^*\|_p$ with $R\geq 1$. If $U=\P^{1,\an}$, then $\pi^{-1}(U)=\tilde{X}$ is a good reduction curve.
Otherwise, write $D_1,\dots,D_m$ for the closed discs contained in $\overline{B}(0,R)$ (each contained in a distinct residue disc) in which $f(x)$ has multiple roots in their interior. 

We factor $f(x)=g(x)h(x)$ where 
\begin{enumerate}
    \item the polynomial $g(x)$ only has roots in $\overline{B}(0,R)$,
    \item the polynomial $g(x)$ has at most one root in each residue disc, and
    \item the polynomial $h(x)$ has an even number of roots in each $D_i$.
\end{enumerate}
Let $\ell(x)$ be a square root of $h(x)$ on $U$ which exists by Lemma~\ref{squarerootsofseries}, and set $\tilde{y}=\frac{y}{\ell(x)}$. Because $\ell(x)$ is non-vanishing on $U$, the map
\[(x,y)\mapsto (x,\tilde{y})\]
is invertible for $x$ in $U$ hence gives the desired isomorphism.

We now consider the complete curve $\tilde{\pi}\colon \tilde{X}\to \P^1$ given by compactifying $\tilde{y}^2=g(x)$. Because the roots of $g(x)$ lie in distinct residue discs, $\tilde{X}$ has a smooth model over $R$, the valuation ring of some field $\bK$. Since $\tilde{X}$ has good reduction, we need only show that $\pi^{-1}(U)$ is a basic wide open. If $g(x)$ is of degree $0$, $\tilde{X}$ is the union of two copies of $\P^1$.  In this case, $\pi^{-1}(U)$ is isomorphic to two copies of $U$. If $g(x)$ is of positive degree, we must identify $\pi^{-1}(U)$.

Write $U=\P^{1,\an}\setminus \big(\bigcup_{i=1}^m D_i\cup D_\infty\big)$ for closed discs $D_1,\dots,D_m$, each contained in a residue disc where $D_\infty$ is a disc of the form $\P^{1,\an}\setminus B(0,R')$ for some $R'>1$. We need to identify $\tilde{\pi}^{-1}(D_i)$. As we discussed, the polynomial $g(x)$ has at most one root in each $D_i$. 
Consider the case where $g(x)$ has no roots in $D_i$. Then $g(x)$ has an analytic square root on $D_i$, and $\tilde{\pi}^{-1}(D_i)$ is the union of two disjoint closed discs, each isomorphic to $D_i$.
Now, consider the case where $g(x)$ has exactly one root in $D_i$. By a fractional linear transformation, we may suppose that $g(x)=x$. Then the closed disc $D_i$ is of the form $\{x\mid \|x\|_p\leq r\}$ for some $r\in G$. Consequently,
\[\pi^{-1}(D_i)=\{(x,\tilde{y})\mid x\in D_i, \tilde{y}^2=x\}
=\{\tilde{y}\mid \|y\|_p\leq r^{1/2}\}\] 
is a closed disc. This lies in a residue disc in the model over $R$. A similar argument applies to $D_\infty$.
It follows that 
\[\pi^{-1}(U)=\tilde{X}\setminus \left(\bigcup_{i=1}^m \tilde{\pi}^{-1}(D_i)\cup \pi^{-1}(D_\infty)\right)\] is a basic wide open.
\end{proof}
 
Observe that in the above, $\tilde{X}$ has either one or two components according to where $g(x)$ is degree $0$ or not. We immediately see that $\pi^{-1}(U)$ is disconnected exactly when $f(x)$ has no roots in $U$ and has an even number of roots in each deleted disc $D_i$. In this case, we say that $U$ is \defi{even}. Otherwise, we say that it is \defi{odd}.

The double cover of an annulus (such as one arising as a component of the intersection of two elements of a semistable covering) given by $y^2=f(x)$ is the following.

\begin{lem}
Let $A$ be an annulus in $\A^{1,\an} \subset \P^{1,\an}$. Suppose that $S_f$ is disjoint from $A$.
Then $\pi^{-1}(A)=\{(x,y)\mid x\in A, y^2=f(x)\}$ is
\begin{enumerate}
\item \label{i:disjointannuli} the union of two disjoint annuli if $S_f$ has an even number of elements in each component of $\P^{1,\an}\setminus A$,
\item \label{i:singleannulus} an annulus if $S_f$ has an odd number of elements in each component of $\P^{1,\an}\setminus A$. 
\end{enumerate}
\end{lem}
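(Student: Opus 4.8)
The plan is to decide, on the annulus $A$, whether $f$ admits an analytic square root; the two cases of the lemma will correspond to the two possible answers. To set this up I first normalize $A$. Since $A\subset\A^{1,\an}$, the two connected components $D_1,D_2$ of $\P^{1,\an}\setminus A$ are closed discs, exactly one of which, say $D_2$, contains $\infty$. Writing $D_1=\overline{B}(\gamma,\rho)$ and using that $D_2$ is the complement of an open disc disjoint from $D_1$, the ultrametric inequality forces the two discs to be concentric; after the translation $x\mapsto x-\gamma$ (an affine map, hence fixing $\infty$, preserving $\deg f$, and therefore preserving both $S_f$ and the count of its elements in each component) I may assume $D_1=\{\|x\|_p\le\rho\}$, $D_2=\{\|x\|_p\ge\sigma\}$ with $\rho<\sigma$, and $A=\{\rho<\|x\|_p<\sigma\}$, with $\infty\in D_2$.

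The key step is a winding-number computation. Write $f(x)=c\prod_\alpha(x-\alpha)$ over the finite roots. Exactly as in Lemma~\ref{squarerootsofseries}, for a root $\alpha\in D_1$ (so $\|\alpha\|_p\le\rho<\|x\|_p$ on $A$) I write $x-\alpha=x(1-\alpha/x)$, while for a finite root $\alpha\in D_2$ (so $\|x\|_p<\sigma\le\|\alpha\|_p$ on $A$) I write $x-\alpha=-\alpha(1-x/\alpha)$; in each case the binomial series produces an analytic square root of the factor $(1-\,\cdot\,)$ on $A$ (this is where $p$ odd enters, so that $(1+u)^{1/2}$ converges for $\|u\|_p<1$), while $c^{1/2}$ and each $(-\alpha)^{1/2}$ exist in $\C_p$. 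Collecting factors gives, on $A$,
\[f(x)=x^{\,n_1}\,\ell_0(x)^2,\]
where $\ell_0$ is a nonvanishing analytic function on $A$ and $n_1=\#(S_f\cap D_1)$ is the number of roots in the component not containing $\infty$. Because $\#S_f$ is even, $n_1$ is even exactly when each component carries an even number of elements of $S_f$, and odd exactly when each carries an odd number.

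It then remains to read off the two cases. If $n_1$ is even, set $\ell=x^{n_1/2}\ell_0$, a nonvanishing analytic function on $A$ with $f=\ell^2$; the substitution $\tilde y=y/\ell$ gives $\tilde y^2=1$, so $\tilde y=\pm1$, and since $p\neq2$ the map $(x,\varepsilon)\mapsto(x,\varepsilon\,\ell(x))$ identifies $\pi^{-1}(A)$ with two disjoint copies of $A$, i.e.\ with two disjoint annuli, proving part~\ref{i:disjointannuli}. If $n_1$ is odd, set $\ell=x^{(n_1-1)/2}\ell_0$, so that $f=x\,\ell^2$; now $\tilde y=y/\ell$ satisfies $\tilde y^2=x$, and parametrizing $\pi^{-1}(A)$ by the coordinate $\tilde y$ identifies it with $\{\sqrt\rho<\|\tilde y\|_p<\sqrt\sigma\}$, a single connected annulus, proving part~\ref{i:singleannulus}.

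I expect the main obstacle to be the normalization-and-parity bookkeeping of the second paragraph: one must check that the affine reduction does not disturb the convention placing $\infty\in S_f$ for odd-degree $f$, and, more importantly, that the winding exponent of $f$ along $A$ equals precisely $n_1=\#(S_f\cap D_1)$ (the root at $\infty$ and the leading coefficient contributing no power of $x$), so that its parity matches the hypotheses of the two cases. Once the factorization $f=x^{n_1}\ell_0^2$ is in hand, the square-root construction and the two explicit isomorphisms are routine, echoing Lemma~\ref{squarerootsofseries} and Proposition~\ref{p:eliminateroots}.
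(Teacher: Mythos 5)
Your proof is correct and follows essentially the same route as the paper: after normalizing the annulus, you factor $f$ on $A$ into $x^{n_1}$ times the square of a nonvanishing analytic function via the binomial-series square roots of Lemma~\ref{squarerootsofseries}, reducing to $\tilde{y}^2=1$ or $\tilde{y}^2=x$ — exactly the reduction the paper performs by invoking the proof of Proposition~\ref{p:eliminateroots} to reach $\tilde{y}^2=g(x)$ with $\deg g\le 1$. Your version merely makes explicit the parity bookkeeping (that the winding exponent equals $\#(S_f\cap D_1)$, with $\infty$ and the leading coefficient contributing nothing) that the paper leaves implicit.
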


\begin{proof}
By a fractional linear transformation, we may reduce to the case where $A=\{x\mid 1<\|x\|<r\}$ for some $r>1$. As in the proof of Proposition~\ref{p:eliminateroots}, we can reduce to the case where $\pi^{-1}(A)=\{(x,\tilde{y})\mid \tilde{y}^2=g(x), x\in A\}$ where $g(x)$ is of degree at most $1$. If $g(x)$ is of degree $0$, then we are in case \eqref{i:disjointannuli}. If $g(x)$ is of degree $1$, we can  reduce to the case where $g(x)=x$. Then, $\pi^{-1}(A)$ is given by $\{\tilde{y}\in\A^{1,\an}\mid 1<\|y\|<r^{1/2}\}$, and we are in case \eqref{i:singleannulus}. 
\end{proof}

We refer to an annulus $A$ as \defi{even} or \defi{odd} according to whether $\pi^{-1}(A)$ is disconnected or connected.

If $\cC$ is a semistable covering of $\P^{1,\an}$ that is good with respect to $S_f$, we can produce a semistable covering $\cD$ of $X^{\an}$ that is good with respect to the set of Weierstrass points $W$. We let $\cD$ be the set of components of $\pi^{-1}(U)$ as $U$ ranges over elements of $\cC$. In the case that $U\in\cC$ is even, $\pi^{-1}(U)$ will have two components which will give two elements of the covering of $X^{\an}$. Let $Y_1$ and $Y_2$ be two distinct elements of $\cD$; put $U_i = \pi(Y_i)$ for $i=1,2$. There are three possibilities for the intersection $Y_1\cap Y_2$; it is
\begin{enumerate}
    \item empty if $U_1=U_2$ or $U_1\cap U_2$ is empty,
    \item an annulus if $U_1\cap U_2$ is an odd annulus,
    \item the union of two disjoint annuli if  $U_1\cap U_2$ is an even annulus.
\end{enumerate}

Let $\Gamma(\cD,W)$ be the dual graph of the covering $\cD$ with respect to $W$. We will give a description of $\Gamma(\cD,W)$ in terms of $T$ similar to \cite[Section 6]{stoll2018uniform}. The dual graph will have both closed edges and half-open edges. Unless noted otherwise, edges are taken to be closed. We first designate half-open edges, edges and vertices of $T$ as even or odd. All half-open edges of $T$ are odd. An edge of $T$ is even exactly when the corresponding annulus is even. A vertex of $T$ is even exactly when all of its adjacent edges are even. For a vertex $v$, its \defi{genus} is the integer $g(v)$ satisfying
\[2g(v)-2=-4+n_o(v)\]
where $n_o(v)$ are the number of odd edges (including half-open edges) adjacent to $v$. Observe that even vertices have genus equal to $-1$ (corresponding to a disjoint union of two $\P^1$'s). By the Riemann--Hurwitz formula, $g(v)$ is the genus of the good reduction curve in which $\pi^{-1}(U)$ will be embedded.

\begin{defn} To $T$, we may attach a graph $\Gamma$. Let $\Gamma$ be the graph whose vertex set consists of
\begin{enumerate}
\item one vertex $\tilde{v}$ for each odd vertex $v$ of $T$; and
\item two vertices $\tilde{v}_+$, $\tilde{v}_-$ for each even vertex $v$ of $T$
\end{enumerate}
whose edge set is
\begin{enumerate}
\item one edge $\tilde{e}$ for each odd edge $e$ of $T$;
\item two edges $\tilde{e}_+,\tilde{e}_-$ for each even edge $e$ of $T$; and
\item one half-open edge $\tilde{e}$ for each half-open edge $e$ of $T$.
\end{enumerate}

For each adjacent pair $(v,e)$ of $T$ with $v$ and $e$ odd, we declare $\tilde{v}$ and $\tilde{e}$ adjacent. If $v$ is odd and $e$ is even, we declare $\tilde{v}$ and $\tilde{e}_\sigma$ adjacent for $\sigma=+,-$. If $v$ and $e$ are even, we declare $\tilde{v}_\sigma$ and $\tilde{e}_\sigma$ adjacent for $\sigma=+,-$. Because a half-open edge $e$ is only attached to an odd vertex $v$ of $T$, the corresponding half-open edge $\tilde{e}$ is attached to the vertex $\tilde{v}$ of $\Gamma$.

There is a natural map $\pi\colon \Gamma\to T$ taking $\tilde{v}$ or $\tilde{v}_+,\tilde{v}_-$ to $v$ and $\tilde{e}$ or $\tilde{e}_+, \tilde{e}_-$ to $e$. 
\end{defn}

\begin{eg}
As an illustration, consider the following tree $T$:
\[\begin{tikzpicture}
\draw [thick] (-4,0) -- (4,0);
\filldraw (-4,0) circle (2.5pt);
\filldraw (-2,0) circle (2.5pt);
\filldraw (0,0) circle (2.5pt);
\filldraw (2,0) circle (2.5pt);
\filldraw (4,0) circle (2.5pt);
\draw (-4,0) -- (-4.5,-0.7);
\draw (-4,0) -- (-4,-0.7);
\draw (-4,0) -- (-3.5,-0.7);
\draw (-2,0) -- (-2.0,-0.7);
\draw (2,0) -- (2,-0.7);
\draw (4,0) -- (4,-0.7);
\filldraw (-4,0.4) node{$v_1$};
\filldraw (-2,0.4) node{$v_2$};
\filldraw (0,0.4) node{$v_3$};
\filldraw (2,0.4) node{$v_4$};
\filldraw (4,0.4) node{$v_5$};
\filldraw (-3,0.25) node{$e_1$};
\filldraw (-1,0.25) node{$e_2$};
\filldraw (1,0.25) node{$e_3$};
\filldraw (3,0.25) node{$e_4$};
\end{tikzpicture}\]
The edges $e_2,e_3$ and the vertex $v_3$ are even; all the others are odd. Here is the corresponding graph $\Gamma$:
\[\begin{tikzpicture}
\draw [thick] (-4,3) -- (-2,3);
\draw [thick] (-2,3) -- (0,4);
\draw [thick] (-2,3) -- (0,2);
\draw [thick] (0,4) -- (2,3);
\draw [thick] (0,2) -- (2,3);
\draw [thick] (2,3) -- (4,3);
\filldraw (-4,3) circle (2.5pt);
\filldraw (-2,3) circle (2.5pt);
\filldraw (0,4) circle (2.5pt);
\filldraw (0,2) circle (2.5pt);
\filldraw (2,3) circle (2.5pt);
\filldraw (4,3) circle (2.5pt);
\draw (-4,3) -- (-4.5,2.3);
\draw (-4,3) -- (-4,2.3);
\draw (-4,3) -- (-3.5,2.3);
\draw (-2,3) -- (-2,2.3);
\draw (2,3) -- (2,2.3);
\draw (4,3) -- (4,2.3);
\filldraw (-4,3.4) node{$\tilde{v}_1$};
\filldraw (-2,3.4) node{$\tilde{v}_2$};
\filldraw (0,4.4) node{$\tilde{v}_{3,+}$};
\filldraw (0,1.6) node{$\tilde{v}_{3,-}$};
\filldraw (2,3.4) node{$\tilde{v}_4$};
\filldraw (4,3.4) node{$\tilde{v}_5$};
\filldraw (-3,3.3) node{$\tilde{e}_1$};
\node [thick,rotate=32] at (-1,3.8) {$\tilde{e}_{2,+}$};
\node [thick,rotate=336] at (-1,2.16) {$\tilde{e}_{2,-}$};
\node [thick,rotate=336] at (1.1,3.75) {$\tilde{e}_{3,+}$};
\node [thick,rotate=32] at (1.1,2.25) {$\tilde{e}_{3,-}$};
\filldraw (3,3.3) node{$\tilde{e}_4$};
\end{tikzpicture}\]
\end{eg}

By unwinding the description of $\Gamma$, we have the following proposition (from which one sees that $\Gamma$ is a graph structure on the skeleton of $X^{\an}\setminus W$ by reasoning identical to that of Remark~\ref{rmk:coveringtoskeleton}):
\begin{prop} 
The dual graph $\Gamma(\cD,W)$ is equal to $\Gamma$. 
\end{prop}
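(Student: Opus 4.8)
The plan is to verify the equality by matching, on the two sides, the three kinds of data that define a dual graph --- vertices, closed edges, and half-open edges --- together with their incidences, the bookkeeping being controlled by the covering $\cD\to\cC$ sending $Y$ to $\pi(Y)$. For the vertices, recall that the vertex set of $\Gamma(\cD,W)$ consists of the elements of $\cD$, namely the connected components of $\pi^{-1}(U)$ for $U\in\cC$; by Proposition~\ref{p:eliminateroots} and the discussion following it, this component is unique when $U$ is odd and there are two when $U$ is even. Thus the one real point to establish is that $U$ is even exactly when the vertex $v$ of $T$ corresponding to it is even. This is pure parity bookkeeping: $v$ carries a half-open edge exactly when $U$ contains a root of $f(x)$, and every half-open edge is odd; a closed edge at $v$ is even exactly when its boundary annulus is even, which, because $S_f$ has even cardinality, happens exactly when the deleted disc on the far side of that annulus contains an even number of roots. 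Hence $v$ is even (no odd adjacent edge of either kind) if and only if $U$ contains no root and each deleted disc contains evenly many roots, which is precisely the definition of an even $U$. This yields the vertex correspondence declared for $\Gamma$: one $\tilde v$ for each odd $v$ and a pair $\tilde v_\pm$ for each even $v$.

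The edges are handled the same way. The closed edges of $\Gamma(\cD,W)$ are the annulus components of the intersections $Y_1\cap Y_2$, which by the trichotomy stated just before the definition of $\Gamma$ are the components of $\pi^{-1}(A)$ for $A$ the annulus of the underlying edge $e$ of $T$; by the lemma on double covers of annuli this is a single annulus for odd $e$ and a pair of disjoint annuli for even $e$, giving one edge $\tilde e$ or two edges $\tilde e_\pm$ as in $\Gamma$. No self-edges arise, since each $U^\circ_{\bk}$ is smooth (as $\tilde X$ has good reduction) and so contributes no ordinary double points. Likewise the half-open edges of $\Gamma(\cD,W)$ are indexed by the Weierstrass points $W$, one lying over each root of $f(x)$; the unique Weierstrass point over a root in $U$ sits in the single component $\tilde v$ over the necessarily odd vertex $v$, producing the half-open edge $\tilde e$ at $\tilde v$ prescribed for $\Gamma$.

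It remains to match the incidences, and here lies the one delicate step: fixing the signs so that the declared relations $\tilde v_\sigma\sim\tilde e_\sigma$ hold and, crucially, do not cross. Since an even vertex is adjacent only to even edges, the possible adjacent pairs are (odd, odd), (odd, even) and (even, even); the first two are immediate, as the single component $\tilde v$ over an odd $U$ meets every component of $\pi^{-1}(A)$. For the (even, even) case I would invoke the square root $\ell(x)$ of Lemma~\ref{squarerootsofseries} that splits the cover: the two sheets of $\pi^{-1}(U)$ are $\tilde y=\pm\ell(x)$, and restricting $\ell(x)$ to a boundary annulus $A\subset U$ shows that each sheet limits into exactly one of the two annuli of $\pi^{-1}(A)$, never both. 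The genuine obstacle is to make these local sign choices globally coherent across the even locus, and this is resolved by the fact that $T$ is a tree: the subgraph spanned by even vertices and even edges is then a forest, hence free of monodromy, so a sign assignment can be propagated unobstructedly from any even vertex outward, the two sheets being forced to recombine only at the bounding odd vertices --- which is exactly how $\Gamma$ acquires its cycles. With the labels chosen in this way the geometric incidences agree with the declared ones, and $\Gamma(\cD,W)=\Gamma$ follows.
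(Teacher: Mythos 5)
Your proof is correct and takes the same route as the paper, which gives no argument beyond asserting the proposition ``by unwinding the description of $\Gamma$'': you carry out exactly that unwinding, matching vertices, closed edges, and half-open edges through the parity dictionary (even $U$ iff even vertex $v$, using that $|S_f|$ is even) and the stated trichotomy for $Y_1\cap Y_2$. Your one extra point --- that the $\pm$-labels can be chosen globally coherently because the even subgraph of the tree $T$ is a forest, so the restricted double cover splits into two sheets there --- is a detail the paper leaves implicit, and you resolve it correctly.
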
 

Let $T_e$ be the union of even edges of $T$ and let $V_o$ be the odd vertices of $T$ that are adjacent to even edges. We will describe the first homology group of $\Gamma$ in terms of the relative homology group $H_1(T_e,V_o;\R)$ which is given as the kernel of the map
\[\partial\colon C_1(T_e;\R)\to C_0(T_e;\R)/C_0(V_o;\R).\]
Define a map $\iota\colon C_1(T_e;\R)\to C_1(\Gamma;\R)$ by $e\mapsto \tilde{e}_+-\tilde{e}_-$.

\begin{prop} \label{p:iota} The map $\iota$ induces an isomorphism $\iota\colon H_1(T_e,V_o;\R)\to H_1(\Gamma;\R)$.
\end{prop}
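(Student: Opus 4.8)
The plan is to promote $\iota$ to a chain map between the relative simplicial chain complex of the pair $(T_e,V_o)$ and that of $\Gamma$, and then to identify the image of $\iota$ with the anti-invariant part of $H_1(\Gamma;\R)$ under the hyperelliptic involution. The point of view is that $\pi\colon\Gamma\to T$ is the quotient by an involution $w$, and the proposition is really the statement that the $(-1)$-eigenspace of $H_1(\Gamma;\R)$ is computed by the even part of $T$.

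First I would extend $\iota$ to degree $0$. The vertices of $T_e$ are precisely the even vertices of $T$ together with $V_o$, so I define $\iota_0\colon C_0(T_e;\R)\to C_0(\Gamma;\R)$ by $v\mapsto\tilde v_+-\tilde v_-$ for $v$ even and $v\mapsto 0$ for $v\in V_o$. A case check over the three types of even edge $e=vw$ (both endpoints even; one even, one odd; both odd), using the adjacency rules defining $\Gamma$, verifies that $\partial_\Gamma\circ\iota=\iota_0\circ\partial_{T_e}$; for instance, when both endpoints are odd the edges $\tilde e_+,\tilde e_-$ join the same pair $\tilde v,\tilde w$, so $\partial_\Gamma(\tilde e_+-\tilde e_-)=0=\iota_0(\partial e)$. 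Because the vectors $\tilde v_+-\tilde v_-$ for $v$ even are linearly independent, $\ker\iota_0=C_0(V_o;\R)$ exactly, so the chain map descends to the relative complex. In particular any $c\in H_1(T_e,V_o;\R)=\ker(\partial\colon C_1(T_e;\R)\to C_0(T_e;\R)/C_0(V_o;\R))$ satisfies $\partial_\Gamma\iota(c)=\iota_0(\partial c)=0$, so $\iota(c)\in Z_1(\Gamma;\R)=H_1(\Gamma;\R)$. Injectivity is immediate, since $\iota$ is already injective on all of $C_1(T_e;\R)$.

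For surjectivity I would use the involution $w$ acting on $\Gamma$ as the deck transformation of $\pi\colon\Gamma\to T$: it fixes each $\tilde v$ and $\tilde e$ and swaps $\tilde v_+\leftrightarrow\tilde v_-$ and $\tilde e_+\leftrightarrow\tilde e_-$. Since the quotient $\Gamma/w$ is the tree $T$, the transfer isomorphism $H_1(\Gamma;\R)^{w}\cong H_1(T;\R)$ together with $b_1(T)=0$ gives $H_1(\Gamma;\R)^{+}=0$, so $w$ acts by $-1$ on all of $H_1(\Gamma;\R)$. Inspecting the standard basis of $C_1(\Gamma;\R)$ shows that its $(-1)$-eigenspace is spanned by $\{\tilde e_+-\tilde e_-\}_{e\text{ even}}=\iota(C_1(T_e;\R))$ (the odd edges $\tilde e$, and in particular all half-open edges, are fixed by $w$ and so contribute only to the $+1$-eigenspace). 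Hence any cycle $C\in H_1(\Gamma;\R)=H_1(\Gamma;\R)^{-}$ can be written $C=\iota(c)$ for a unique $c\in C_1(T_e;\R)$, and then $\iota_0(\partial c)=\partial_\Gamma C=0$ forces $\partial c\in\ker\iota_0=C_0(V_o;\R)$, i.e.\ $c\in H_1(T_e,V_o;\R)$. This exhibits $C$ in the image and completes the proof.

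The one genuinely non-formal input is the vanishing $H_1(\Gamma;\R)^{+}=0$; everything else is bookkeeping with the explicit combinatorial description of $\Gamma$. I would establish it via the standard fact that for a finite group acting on a finite CW complex the real homology of the quotient is the invariant part of the homology of the total space, applied to $\langle w\rangle$ with quotient the tree $T$. A direct Euler-characteristic computation of $b_1(\Gamma)$ is also possible but is messier, as it requires separately determining the number of connected components of $\Gamma$ from the parity data of $T$; the involution argument sidesteps this entirely.
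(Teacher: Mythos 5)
Your proof is correct, but it reaches the conclusion by a genuinely different route than the paper. The paper constructs an explicit inverse $\kappa\colon C_1(\Gamma;\R)\to C_1(T_e;\R)$ with $\kappa(\tilde{e})=0$, $\kappa(\tilde{e}_+)=e$, $\kappa(\tilde{e}_-)=0$; it checks that $\kappa$ sends cycles to relative cycles by a local argument (over an even vertex, $\pi$ is a simplicial homeomorphism on the open star of $\tilde{v}_+$, so the vanishing of the coefficient of $\tilde{v}_+$ in $\partial C$ forces the coefficient of $v$ in $\partial\kappa(C)$ to vanish), and it proves $\iota\circ\kappa=\operatorname{id}$ by pushing a cycle $C=\sum a_e e$ forward to $T$: since $T$ is a tree, $\pi_\#(C)=0$ already at the chain level, which forces $a_{\tilde{e}}=0$ for odd edges and $a_{\tilde{e}_+}=-a_{\tilde{e}_-}$ over even edges. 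That chain-level computation is precisely the anti-invariance statement you derive homologically: you realize $\pi\colon\Gamma\to T$ as the quotient by the deck involution $w$ and use the transfer isomorphism $H_1(\Gamma;\R)^w\cong H_1(T;\R)=0$ to conclude $w$ acts by $-1$ on $H_1(\Gamma;\R)=Z_1(\Gamma;\R)$, then read off the chain-level $(-1)$-eigenspace as $\operatorname{span}\{\tilde{e}_+-\tilde{e}_-\}=\iota(C_1(T_e;\R))$. Your degree-$0$ extension $\iota_0$ with $\ker\iota_0=C_0(V_o;\R)$ cleanly replaces the paper's open-star argument for the relative-cycle condition, and your appeal to the transfer is legitimate here because $w$ never inverts an edge (it fixes each $\tilde{e}$ pointwise and swaps the distinct edges $\tilde{e}_+\leftrightarrow\tilde{e}_-$), so the averaging argument at the chain level applies over $\R$; it would be worth saying this explicitly, since the quotient statement for finite group actions does require such regularity (or a subdivision). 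As for what each approach buys: the paper's proof is entirely elementary and self-contained, needing nothing beyond $Z_1(T;\R)=0$ for a tree, and it produces an explicit retraction $\kappa$; yours is conceptually sharper, exhibiting $H_1(\Gamma;\R)$ as the full anti-invariant part of the hyperelliptic involution acting on the skeleton, which pleasingly mirrors the fact, used elsewhere in the paper, that only the odd part $H^1_{\dR}(Z)^-$ of the cohomology of the hyperelliptic pieces is relevant.
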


\begin{proof}
Define a map $\kappa\colon C_1(\Gamma;\R)\to C_1(T_e;\R)$
by 
\[\kappa(\tilde{e})=0,\ \kappa(\tilde{e}_+)=e,\  \kappa(\tilde{e}_-)=0.\]
We first show that $\kappa$ maps $H_1(\Gamma;\R)$ to $H_1(T_e,V_o;\R)$. Let $C\in H_1(\Gamma;\R)$. For an even vertex $v$, $\pi$ is an simplicial homeomorphism of the open star of $\tilde{v}_+$ onto its image. Because the coefficient of $\tilde{v}_+$ in $\partial C$ is zero, the coefficient of $v$ in $\partial(\kappa(C))$ is also zero. Consequently, we have $\kappa(C)\in H_1(T_e,V_o;\R)$.

Now, we claim that $\kappa$ and $\iota$, considered as maps between $H_1(\Gamma;\R)$ and $H_1(T_e,V_o;\R)$, are inverses of one another. Clearly $\kappa\circ\iota$ is the identity. We claim $\iota\circ\kappa$ is the identity. If $C=\sum_{e} a_e e$ is a cycle in $\Gamma$, then $\pi_*(C)=0$ in $H_1(T;\R)$. The only way that this can occur is if $a_{\tilde{e}}=0$ for all edges $\tilde{e}$ and $a_{\tilde{e}_+}=-a_{\tilde{e}_-}$ for all pairs $(\tilde{e}_+,\tilde{e}_-)$ above an even edge $e$. From this we can conclude that 
$C=\iota(\kappa(C))$.
\end{proof}

The following relation between the cycle pairing on $(T_e,V_o)$ and that on $\Gamma$ is straightforward: 

\begin{prop} \label{p:iotacycle}
Let $C,D\in H_1(T_e,V_o;\R)$. Then $\langle\iota(C),\iota(D)\rangle=2\langle C,D\rangle$.
\end{prop}

\section{Integrals on Hyperelliptic Basic Wide Opens} \label{IntegrationOnWideOpens}

\subsection{$1$-forms on Hyperelliptic Basic Wide Opens} Let $X$ be a hyperelliptic curve defined by $y^2 = f(x)$. In Section~\ref{HyperellipticCoverings}, we explained how to construct a semistable covering of $X^\an$ by hyperelliptic basic wide opens. In this section, we summarize Berkovich--Coleman integration algorithms on these spaces. We note that these are ordinary Coleman integrals; in particular, they are path-independent. 

We fix a covering as above and consider an element $Y$ of this covering. Let $\omega$ be an odd holomorphic $1$-form on $Y$. Recall that odd means that the hyperelliptic involution acts on $\omega$ as multiplication by $-1$. 

If the space $Y$ is isomorphic to the standard open disc (resp. a standard open annulus) with parameter $t$, then $\omega$ pulls back as $F(t)dt$ where $F(t)$ is a power (resp. Laurent) series. In this case one can compute the integral by antidifferentiating. 

For other spaces, in order to make  use of the existing explicit methods, we need to pass to a good reduction curve. By the proof of Proposition~\ref{p:eliminateroots}, the space $Y$ is isomorphic to a basic wide open space $Z$ inside the good reduction curve $\tilde{X}^{\an}$ given by $y^2=g(x)$ for some polynomial $g(x)$ of degree $d$. Note that if $d$ is odd, then $d=2g+1$, and if $d$ is even, $d=2g+2$ where $g$ is the genus of $\tilde{X}$. We will suppose that $g(x)\in\bK[x]$ for some finite extension $\bK$ of $\Q_p$. As we will discuss in Section~\ref{s:powerseriesexpansion}, the form $\omega$ pulls back to $Z$ as an odd $1$-form that can be expressed as a series of odd $1$-forms. By Proposition~\ref{swapintsum}, we can interchange the order of summation and integration. Thus we need to integrate terms in this series. Let $\eta$ denote such a term. Using the change-of-variables property for Berkovich--Coleman integrals, it suffices to compute the integral of $\eta$ on $Z$. On the other hand, we will see that the form $\eta$ extends to $\tilde{X}$ as a meromorphic form with poles outside of $Z$ and by Remark~\ref{BCintegralislocal} we can perform this integral on the complete curve $\tilde{X}$.

If we write $Z=\tilde{X}^{\an}\setminus \left(\bigcup_{i=1}^r D'_i\right)$ for closed discs  $D'_1,\dots,D'_r$ (which arise as preimages of discs closed in $\P^{1,\an}$), then by \cite[Propositions 4.3, 4.4]{coleman89:reciprocity_laws} (see also the discussion in \cite[Theorem 2.24]{krzb15:uniform_bounds}) the sequence
\[0 \rightarrow H^1_\dR(\tilde{X}) \rightarrow H^1_\dR(Z) \xrightarrow{\Dsum\Res} \Dsum_{i=1}^r \C_p \xrightarrow{\sum} \C_p \rightarrow 0\]
is exact where $\Res$ takes the residue around the $D'_i$'s and $\Sigma$ is summation. If we let the superscript ``$-$'' denote the $(-1)$-eigenspace of the maps induced by the hyperelliptic involution, we have the short exact sequence
\begin{equation}
\label{exactseqfirstdeRham}
0 \rightarrow H^1_\dR(\tilde{X})^{-} \rightarrow H^1_\dR(Z)^- \xrightarrow{\Dsum\Res}
\left(\Dsum_{i=1}^r \C_p\right)^-\rightarrow 0
\end{equation} 
which says that in order to obtain a spanning set for the odd part of the first de Rham cohomology of $Z$, we only need to adjoin $1$-forms with poles in the $D'_i$'s to a basis for the de Rham cohomology of $\tilde{X}$. Here the hyperelliptic involution acting on the last factor exchanges the residues around hyperelliptically conjugate discs and acts as the identity on residues around discs containing a Weierstrass point.

We now consider the case where $d=\text{deg}(g(x))\geq 3$ in which case the curve $\tilde{X}$ is hyperelliptic. 
Extend the field $\bK$ so that it contains the roots of $g(x)$. 

By our construction, the $D'_i$'s arise as components of the preimages under $\pi\colon \tilde{X}^{\an}\to \P^{1,\an}$ of some closed discs $D_1,\dots,D_n$ in $\A^{1,\an}$, each contained in a distinct residue disc and possibly also of a disc $D_\infty$ around $\infty$. Such a disc is called \defi{Weierstrass} if it contains a root of $g(x)$. Suppose that we have ordered the discs such that $D_1,\dots,D_k$ are the non-Weierstrass discs and $D_{k+1},\dots,D_n$ are the Weierstrass discs. Observe that $D_\infty$ is Weierstrass if and only if $g(x)$ is of odd degree.

Let $\beta_1,\dots,\beta_n$ be elements of $\P^1(\bK)$ contained in $D_1,\dots,D_n$. We choose $\beta_i$ to be a root of $g(x)$ if $D_i$ is a Weierstrass disc contained in $\A^{1,\an}$. For $D_\infty$, choose $\beta_\infty=\infty$. Define the forms
\[\left\{\nu_j=\frac{dx}{(x-\beta_j)2y}\right\}_{j=1,\dots,k}\] 
where the form $\nu_j$ has simple poles at the hyperelliptically conjugate points $\pi^{-1}(\beta_j)$.

For an integer $i$, define the $1$-form \[\omega_i=x^i\frac{dx}{2y}.\]
In both the odd and even degree cases, $\{\omega_0,\dots,\omega_{d-2},\nu_1,\dots,\nu_k\}$ will form a spanning set for $H^1_\dR(Z)^-$. Consequently,
\[\eta=dF+\sum_{i=0}^{d-2} c_i\omega_i + \sum_{j=1}^k d_j\nu_j\]  
holds for an analytic function $F$ on $Z$ and $c_i,d_j \in\bK$. For points $R,S\in Z(\C_p)$, the equality above gives
\[\BCint_S^R\eta=F(R)-F(S)+\sum_{i=0}^{d-2} c_i\BCint_S^R\omega_i + \sum_{j=1}^k d_j\BCint_S^R\nu_j.\] 
Below, we will explain how to compute the integrals on the right.

\subsection{Summary of Integration Algorithms}
We will first state the algorithms when $g(x)$ is of odd degree where they are most fully developed. There is partial work in the even degree case, and one can apply a fractional linear transformation to $\P^1$ to transform the even degree case to the odd degree case.

We start with the integrals $\lBCint_S^R\omega_i$. The paper \cite{BBKExplicit} describes a method for computing Coleman integrals of those meromorphic forms whose poles all belong to Weierstrass residue discs.

If the points $R$ and $S$ lie in the same residue disc, in which case we refer to the integral as a \emph{tiny integral}, we may use the following lemma.

\begin{lem} \label{tinyintegral} 
\emph{(\cite[Algorithm 8]{BBKExplicit})}
For points $R,S\in \tilde{X}(\C_p)$ in the same residue disc, neither equal to the point at infinity, we have
\[\BCint_S^R \omega_i = \int_0^1 \frac{x(t)^i}{2y(t)}\frac{dx(t)}{dt}dt\]
where $(x(t),y(t))$ is a linear interpolation from $S$ to $R$ in terms of a local coordinate $t$. We can similarly integrate any form that is holomorphic in the residue disc containing the endpoints.  
\end{lem}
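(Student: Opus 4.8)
The plan is to reduce the whole computation to a single residue disc, where the Berkovich--Coleman integral is forced to coincide with naive antidifferentiation by the defining axioms of an integration theory. Let $D$ be the residue disc of $\tilde{X}^{\an}$ containing both $R$ and $S$. The first step is to check that $\omega_i = x^i\,dx/(2y)$ is \emph{holomorphic} on $D$. On $\tilde{X}$ its poles can occur only over $x=\infty$ and, a priori, at the Weierstrass points; but at a finite Weierstrass point the relation $2y\,dy = g'(x)\,dx$ gives $dx/(2y) = dy/g'(x)$, which is holomorphic because $g$ has distinct roots and so $g'$ is nonvanishing there. Since neither $R$ nor $S$ is the point at infinity, $D$ is a finite residue disc containing no point over $x=\infty$, and hence $\omega_i|_D \in \Omega^1(D)$.

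Next I would invoke locality together with the first axiom for integration theories. The disc $D$ is an open subdomain of $\tilde{X}^{\an}$ isomorphic to a one-dimensional open polydisc, with a local coordinate $t$ that we normalize so that $t(S)=0$ and $t(R)=1$; the linear interpolation $x(t)=x(S)+t\,(x(R)-x(S))$, $y=y(t)$ of the lemma then describes the path in $D$ from $S$ to $R$ as $t$ runs from $0$ to $1$. A holomorphic $1$-form on an open disc is exact (expand $\omega_i|_D$ in $t$ and antidifferentiate term by term), so $\omega_i|_D = df$ for an analytic $f$ on $D$. By Remark~\ref{BCintegralislocal} the integral $\BCint_S^R\omega_i$ depends only on $D$, $\omega_i|_D$, and a path from $S$ to $R$ in $D$, and the first axiom in the definition of an integration theory, applied to the open polydisc $D$ and the exact form $\omega_i|_D=df$, yields $\BCint_S^R\omega_i = f(R)-f(S)$.

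It then remains to recognize $f(R)-f(S)$ as the asserted formal integral. In the coordinate $t$ we have $\omega_i|_D = \tfrac{x(t)^i}{2y(t)}\tfrac{dx(t)}{dt}\,dt = df$, so that $\tfrac{d}{dt}f = \tfrac{x(t)^i}{2y(t)}\tfrac{dx(t)}{dt}$ as power series in $t$. Evaluating the formal antiderivative between $t=0$ and $t=1$ therefore gives $\int_0^1 \tfrac{x(t)^i}{2y(t)}\tfrac{dx(t)}{dt}\,dt = f(R)-f(S) = \BCint_S^R\omega_i$, which is the claimed formula. The closing remark of the lemma follows by the same argument verbatim, since the only property of $\omega_i$ used was its holomorphy on the residue disc containing the endpoints.

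I expect the main point requiring care to be the last step rather than the invocation of the axiom: one must know that the power series for $f$ (equivalently, the term-by-term antiderivative of $\tfrac{x(t)^i}{2y(t)}\tfrac{dx(t)}{dt}$) converges on all of $D$ so that its value at $t=1$ genuinely equals $f(R)$. This relies on $R$ and $S$ lying in the \emph{same} residue disc, which keeps the interpolation inside $D$ and guarantees convergence at the endpoint; in a Weierstrass disc one must additionally note that the apparent singularity of $1/y(t)$ is cancelled by the vanishing of $\tfrac{dx(t)}{dt}$ (again via $dx/(2y)=dy/g'(x)$), so the integrand is an honest holomorphic power series in $t$. The holomorphy check of the first paragraph, elementary as it is, is the place where the hypothesis that neither endpoint is the point at infinity is essential.
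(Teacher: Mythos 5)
Your proof is correct and is essentially the standard justification behind the cited result: the paper itself supplies no argument, simply quoting \cite[Algorithm~8]{BBKExplicit}, and your route --- holomorphy of $\omega_i$ on a finite residue disc (via $dx/(2y)=dy/g'(x)$ at Weierstrass points), locality plus axiom (1) of an integration theory applied to the disc, and term-by-term antidifferentiation, whose convergence at $t=1$ holds because the series in $t$ converges on a disc of radius $1/\|x(R)-x(S)\|_p>1$ --- is precisely how that algorithm is justified, including your correct observation that in a Weierstrass disc the apparent $1/y(t)$ singularity cancels against the vanishing of $dx/dt$. One minor reading point: the hypothesis that neither endpoint \emph{equals} the point at infinity does not by itself force the common residue disc to be finite (for $i\geq g$ the form $\omega_i$ is not holomorphic on the infinite residue disc, and that case requires a separate Laurent-series treatment), but your interpretation matches the lemma's intended scope, as its closing sentence about forms holomorphic on the residue disc makes clear.
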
 

If the points $R$ and $S$ lie in distinct non-Weierstrass residue discs, the method of tiny integrals is not available. Coleman's idea was to extend the notion of integration by \emph{analytic continuation along Frobenius}. Let $\phi$ be the lift of Frobenius constructed in \cite[Algorithm 10]{BBKExplicit}. This map is rigid analytic; moreover it maps a $\Q_p$-point into its residue disc. By the change-of-variables formula with respect to $\phi$, we have the following theorem.

\begin{thm} \label{mainalginBBK}
Let $M$ denote the matrix over $\bK$ such that
\begin{equation} \label{kedlayaalgortihm}
\phi^*\omega_i = df_i+\sum_{j=0}^{2g-1}M_{ij}\omega_j
\end{equation} 
for all $i=0,1,\dots,2g-1$. Then, for points $R,S\in \tilde{X}(\Q_p)$ in distinct non-Weierstrass residue discs, we have the equality
\[\sum_{j=0}^{2g-1}(M-I)_{ij}\BCint_S^R\omega_j=f_i(S)-f_i(R)-\BCint_S^{\phi(S)}\omega_i-\BCint_{\phi(R)}^R\omega_i.\]
Moreover, the matrix $M-
I$ is invertible (see \cite[Section 2]{kedlaya2001counting}), and we can solve this linear system to obtain the integrals $\lBCint_S^R\omega_i$.
\end{thm}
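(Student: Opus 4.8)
The plan is to derive the displayed linear system directly from the defining equation \eqref{kedlayaalgortihm} together with the two characterizing properties of Berkovich--Coleman integration on a basic wide open: that $\lBCint$ is a local primitive (so it respects $df_i$) and that it obeys the change-of-variables formula with respect to the lift of Frobenius $\phi$. First I would integrate both sides of $\phi^*\omega_i = df_i+\sum_j M_{ij}\omega_j$ along a path from $S$ to $R$. The left-hand side, by the change-of-variables property \[\BCint_S^R \phi^*\omega_i=\BCint_{\phi(S)}^{\phi(R)}\omega_i,\] while the right-hand side becomes $f_i(R)-f_i(S)+\sum_j M_{ij}\BCint_S^R\omega_j$, using property~(1) of an integration theory to integrate the exact form $df_i$.

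The key maneuver is then to rewrite $\lBCint_{\phi(S)}^{\phi(R)}\omega_i$ by breaking the path into three concatenated segments via property~(3) (additivity under concatenation): from $\phi(S)$ to $S$, from $S$ to $R$, and from $R$ to $\phi(R)$. This gives \[\BCint_{\phi(S)}^{\phi(R)}\omega_i=\BCint_{\phi(S)}^{S}\omega_i+\BCint_{S}^{R}\omega_i+\BCint_{R}^{\phi(R)}\omega_i.\] Here I am using that $\phi$ maps each point into its own residue disc, so the tiny integrals $\lBCint_{S}^{\phi(S)}\omega_i$ and $\lBCint_{\phi(R)}^{R}\omega_i$ are well-defined and computable by Lemma~\ref{tinyintegral}; note $\lBCint_{\phi(S)}^{S}\omega_i=-\lBCint_S^{\phi(S)}\omega_i$. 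Equating the two expressions for the integral of $\phi^*\omega_i$ and collecting the unknown quantities $\lBCint_S^R\omega_j$ on the left yields exactly \[\sum_{j=0}^{2g-1}(M-I)_{ij}\BCint_S^R\omega_j=f_i(S)-f_i(R)-\BCint_S^{\phi(S)}\omega_i-\BCint_{\phi(R)}^R\omega_i,\] after moving the $f_i$ terms and the two tiny integrals across, which accounts for all the sign placements in the statement.

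Finally I would invoke the invertibility of $M-I$, citing \cite[Section~2]{kedlaya2001counting}: since the eigenvalues of the Frobenius action on $H^1_{\dR}$ are Weil numbers of absolute value $p^{1/2}$ (in particular none equal to $1$), the matrix $M-I$ is nonsingular, so the linear system has a unique solution recovering the desired integrals $\lBCint_S^R\omega_i$. I expect the only genuine subtlety—rather than the algebra, which is routine bookkeeping of concatenation and signs—to be the justification that the change-of-variables formula applies verbatim at the level of these meromorphic forms with poles in Weierstrass discs, and that all intermediate integrals (especially the tiny ones involving $\phi(S)$ and $\phi(R)$) are legitimately defined; but both points are supplied by the preceding setup, namely the locality of $\lBCint$ (Remark~\ref{BCintegralislocal}), the univalence of the Coleman primitive on the basic wide open, and the fact that $\phi$ preserves residue discs. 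The residual care needed is only that $R,S$ lie in distinct non-Weierstrass residue discs so that $\phi(R),\phi(S)$ stay away from the poles of $\omega_i$, which is exactly the hypothesis of the theorem.
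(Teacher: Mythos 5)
Your proposal is correct, and it is essentially the same argument as the paper's: the paper's proof consists of a citation to Algorithm 11 and Remark 13 of \cite{BBKExplicit}, and your derivation (integrate \eqref{kedlayaalgortihm} from $S$ to $R$, apply the change-of-variables formula, split $\lBCint_{\phi(S)}^{\phi(R)}\omega_i$ by concatenation, flip the signs of the two tiny integrals, and invoke invertibility of $M-I$ via the Weil bounds from \cite[Section~2]{kedlaya2001counting}) is precisely the content of that cited algorithm. Your closing remarks on the legitimacy of the tiny integrals and the change of variables match the hypotheses the paper sets up before the theorem ($\phi$ maps a $\Q_p$-point into its residue disc, and $R,S$ lie in distinct non-Weierstrass residue discs).
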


\begin{proof}
The terms in equation \eqref{kedlayaalgortihm} can be calculated using Kedlaya's algorithm \cite[Algorithm 10]{BBKExplicit}. The result follows from Algorithm 11 and Remark 13 in \cite{BBKExplicit}.
\end{proof}

Thanks to this theorem, beyond evaluating primitives, computing tiny integrals and solving a linear system, the matrix of Frobenius is the only data that is needed to compute Coleman integrals between endpoints in distinct non-Weierstrass residue discs.

Suppose now that $R'$ and $S'$ are points, at least one of which is Weierstrass, lying in different residue discs. The following lemma will be useful.

\begin{lem} \label{weierstrassendpoints}
\emph{(\cite[Lemma 16]{BBKExplicit})} Let $\omega$ be an odd meromorphic $1$-form on $\tilde{X}$. For points $R',S'\in \tilde{X}(\C_p)$ which are not poles of $\omega$, such that $S'$ is a Weierstrass point, we have
\[\BCint_{S'}^{R'}\omega = \frac{1}{2}\BCint_{w(R')}^{R'}\omega.\]
In particular, if $R'$ is also a Weierstrass point, then $\lBCint_{S'}^{R'}=0$.
\end{lem}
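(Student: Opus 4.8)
The plan is to prove Lemma~\ref{weierstrassendpoints} by exploiting the behavior of the Berkovich--Coleman integral under the hyperelliptic involution $w$, together with the additivity property and the fact that a Weierstrass point is fixed by $w$. First I would record the key equivariance: since $\omega$ is odd, $w^*\omega=-\omega$, and by the change-of-variables property of Berkovich--Coleman integration (property (2) of $\lBCint$ applied to the morphism $w$), for any points $P,Q$ not poles of $\omega$ we have
\[\BCint_{P}^{Q} w^*\omega = \BCint_{w(P)}^{w(Q)} \omega,\]
hence
\[\BCint_{w(P)}^{w(Q)} \omega = -\BCint_{P}^{Q}\omega.\]
This is the engine of the whole argument, and it uses only that these are honest path-independent Coleman integrals on the good reduction curve $\tilde{X}$ (which was noted at the start of Section~\ref{IntegrationOnWideOpens}), so that writing $\lBCint_{P}^{Q}$ is meaningful.

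Next I would apply this with $P=S'$ and $Q=R'$. Because $S'$ is a Weierstrass point, $w(S')=S'$, so the equivariance reads
\[\BCint_{S'}^{w(R')}\omega = -\BCint_{S'}^{R'}\omega.\]
Now I would use additivity of the integral under concatenation (property (3)) to split the path from $w(R')$ to $R'$ through the intermediate point $S'$:
\[\BCint_{w(R')}^{R'}\omega = \BCint_{w(R')}^{S'}\omega + \BCint_{S'}^{R'}\omega = -\BCint_{S'}^{w(R')}\omega + \BCint_{S'}^{R'}\omega.\]
Substituting the equivariance relation $\lBCint_{S'}^{w(R')}\omega = -\lBCint_{S'}^{R'}\omega$ into this expression gives
\[\BCint_{w(R')}^{R'}\omega = \BCint_{S'}^{R'}\omega + \BCint_{S'}^{R'}\omega = 2\,\BCint_{S'}^{R'}\omega,\]
which rearranges to the claimed formula $\lBCint_{S'}^{R'}\omega=\tfrac12\lBCint_{w(R')}^{R'}\omega$. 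For the final sentence, if $R'$ is also Weierstrass then $w(R')=R'$, so the path from $w(R')$ to $R'$ is constant and $\lBCint_{w(R')}^{R'}\omega=0$, whence $\lBCint_{S'}^{R'}\omega=0$.

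The only genuine subtlety, and the step I would be most careful about, is ensuring that the change-of-variables and additivity formulas apply cleanly despite the presence of poles of $\omega$: one must check that the (path-independent) integrals between the relevant points are well-defined, i.e. that the endpoints $S'$, $R'$, $w(R')$ avoid the poles, which is exactly the hypothesis of the lemma. Because $w$ permutes the poles of the odd form $\omega$ in conjugate pairs, the integrals over the deformed paths remain between non-pole points, so the manipulations above never cross a singularity. This is precisely where the Weierstrass hypothesis on $S'$ does its work—it is what lets us use $S'$ as a fixed point of $w$ and as a legitimate intermediate endpoint for concatenation.
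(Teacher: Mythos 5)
Your proposal is correct and is essentially the paper's own proof: the paper likewise combines $w^*\omega=-\omega$, the fixed point $w(S')=S'$, and additivity in endpoints to get $\BCint_{S'}^{R'}\omega = \BCint_{w(R')}^{S'}\omega$ and hence $\BCint_{w(R')}^{R'}\omega = 2\BCint_{S'}^{R'}\omega$. Your closing remark about endpoints avoiding poles is a reasonable extra sanity check, but the argument is the same.
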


\begin{proof} This follows from $\lBCint_{S'}^{R'}\omega = \lBCint_{S'}^{w(R')}(-\omega) = \lBCint_{w(R')}^{S'}\omega$ and additivity in endpoints.
\end{proof}

If $S$ lies in a finite Weierstrass residue disc containing Weierstrass point $S'$, Lemma~\ref{weierstrassendpoints} gives \[\BCint_{S}^{R}\omega_i = \BCint_{S}^{S'}\omega_i + \frac{1}{2}\BCint_{w(R)}^{R}\omega_i.\] 
If $R$ does not belong to a Weierstrass residue disc, the second integral can be calculated using Theorem~\ref{mainalginBBK}; if $R$ also lies in a finite Weierstrass residue disc containing Weierstrass point $R'$, then by Lemma~\ref{weierstrassendpoints} again, we have
\[\BCint_{S}^{R}\omega_i = \BCint_{S}^{S'}\omega_i + \BCint_{R'}^{R}\omega_i.\] 
These tiny integrals can be computed using Lemma~\ref{tinyintegral}.

Now, we consider the integrals $\lBCint_S^R\nu_j$. As we discussed before, the form $\nu_j$ has poles at the hyperelliptically conjugate points $\pi^{-1}(\beta_j)$. The above approach does not work for this case, however, the paper \cite{BBComputing} provides a new method. 

First, consider the case where $R$ and $S$ lie in the same residue disc. If the form $\nu_j$ is holomorphic in the disc, then we can compute its integral as in Lemma~\ref{tinyintegral}. Otherwise, we make use of the following lemma in which we decompose our form in the disc into the sum of a holomorphic form and a logarithmic differential (i.e. a differrential of the form $df/f$ for $f\in \bK(X)^{\times}$).

\begin{lem} \label{tinywithpoles}
\emph{(\cite[Lemma 4.2]{balakrishnan2016quadratic})} Let $P$ be a non-Weierstrass point and set 
\[\nu = \frac{y(P)}{x-x(P)}\frac{dx}{y}.\]
For points $R,S$ different from $P$ but contained in the residue disc of $P$, we have
\[\BCint_S^R \nu = \BCint_S^R \frac{g(x(P))-g(x)}{y(x-x(P))(y(P)+y)}dx + \operatorname{Log}\bigg(\frac{x(R)-x(P)}{x(S)-x(P)}\bigg)\] 
where the integrand on the right side is holomorphic on the residue disc.
\end{lem}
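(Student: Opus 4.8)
The plan is to produce the stated decomposition by a direct algebraic manipulation using the defining equation of the curve, and then to integrate term by term. First I would introduce the conjugate factor $y(P)+y$ to rationalize. Since $P$ is non-Weierstrass we have $y(P)\neq 0$, and on the residue disc of $P$ the coordinate $x$ is a uniformizer while $y$ takes values close to $y(P)$. The key identity is
\[\frac{y(P)}{y}-1=\frac{y(P)-y}{y}=\frac{(y(P)-y)(y(P)+y)}{y(y(P)+y)}=\frac{y(P)^2-y^2}{y(y(P)+y)}=\frac{g(x(P))-g(x)}{y(y(P)+y)},\]
where the last equality uses $y(P)^2=g(x(P))$ and $y^2=g(x)$. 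Dividing through by $x-x(P)$ and multiplying by $dx$ gives
\[\nu=\frac{y(P)}{x-x(P)}\frac{dx}{y}=\frac{g(x(P))-g(x)}{y(x-x(P))(y(P)+y)}\,dx+\frac{dx}{x-x(P)},\]
which is precisely the claimed splitting into the asserted holomorphic term plus a logarithmic differential.

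Next I would verify that the first term is holomorphic on the residue disc of $P$. Because $g$ is a polynomial, $g(x)-g(x(P))$ is divisible by $x-x(P)$, so $\frac{g(x(P))-g(x)}{x-x(P)}$ extends to a polynomial with no pole at $x=x(P)$; this cancels the only apparent singularity coming from $x-x(P)$. It then remains to check that the denominator $y(y(P)+y)$ is non-vanishing on the disc. Since $P$ is non-Weierstrass, $y(P)\neq 0$; on the residue disc of $P$ the function $y$ is congruent to $y(P)$ modulo the maximal ideal, so $y$ is a unit and $y(P)+y\equiv 2y(P)$ is a unit as well, using that $p$ is odd so that $2$ is invertible. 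Hence the first term is a holomorphic $1$-form on the residue disc.

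Finally I would integrate both sides from $S$ to $R$ using linearity of the Berkovich--Coleman integral. The only nontrivial contribution is the logarithmic differential $\frac{dx}{x-x(P)}$: writing $x$ as the pullback of the coordinate on $\P^1$ under $\pi$, this form is the pullback of $\frac{d(t-x(P))}{t-x(P)}$, so by the change-of-variables property together with the normalization $\BCint_1^z \frac{dt}{t}=\Log(z)$ that defines Berkovich--Coleman integration, its integral from $S$ to $R$ equals $\Log(x(R)-x(P))-\Log(x(S)-x(P))=\Log\!\left(\frac{x(R)-x(P)}{x(S)-x(P)}\right)$. Combining the two terms yields the stated formula. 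I expect the only point needing genuine care to be the holomorphy verification, namely the simultaneous cancellation of the pole at $x=x(P)$ and the non-vanishing of $y(P)+y$, where both the non-Weierstrass hypothesis and the oddness of $p$ enter; the remainder is a direct computation followed by term-by-term integration justified by linearity.
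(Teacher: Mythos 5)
Your proof is correct and is essentially the standard argument behind this lemma: the paper itself gives no proof but defers to \cite[Lemma 4.2]{balakrishnan2016quadratic}, whose proof is exactly your rationalization identity $\frac{y(P)}{y}-1=\frac{g(x(P))-g(x)}{y(y(P)+y)}$ followed by integrating the logarithmic differential $\frac{dx}{x-x(P)}$ via the Berkovich--Coleman normalization. Your unit argument for $y$ and $y(P)+y$ implicitly uses that the residue disc of $P$ is non-Weierstrass (so that $\overline{y(P)}\neq 0$), which is precisely the intended hypothesis here --- in the paper's conventions a ``non-Weierstrass point'' determines a non-Weierstrass residue disc, and the surrounding text reduces to this case via Lemma~\ref{weierstrassendpoints} --- so nothing is missing.
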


Now, we examine the case where  $R$ and $S$ lie in distinct residue discs. As before, using Lemma~\ref{weierstrassendpoints}, we may reduce to the case that the residue discs are non-Weierstrass. Before stating the theorem to deal with this case, we recall the following objects from \cite{coleman1989}:

\begin{itemize}
    \item The space $H_{\dR}^1(X/\bK)$ has a canonical non-degenerate alternating form given by the algebraic cup product 
    \[\cup:H_{\dR}^1(X/\bK)\times H_{\dR}^1(X/\bK)\to \bK.\] 
    This pairing may be described using a well-known formula of Serre:
    \[[\mu_1]\cup[\mu_2]=\sum_{P\in X(\C_p)} \operatorname{Res}_P \left(\mu_2\int\mu_1\right).\]
    \item Let $T(\bK)$ denote the subgroup of differentials of the third kind. We denote the subgroup consisting of the logarithmic differentials by $T_l(\bK)$. By \cite[Proposition 2.5]{coleman1989}, there is a canonical homomorphism 
    \[\Psi:T(\bK)/T_l(\bK)\to H_{\dR}^1(X/\bK)\]
    which is the identity on differentials of the first kind. The map $\Psi$ can be extended to a linear map from the $\bK$-vector space of all meromorphic differentials on $X/\bK$ to $H_{\dR}^1(X/\bK)$ as follows. First, we express a given meromorphic differential $\eta$ as $\eta = \sum a_i\nu_i + \mu$, where $\nu_i$'s are of the third kind, $a_i\in \bar{\bK}$ and $\mu$ is of the second kind; then we define $\Psi(\eta) = \sum a_i\Psi(\nu_i) + [\mu].$
\end{itemize}

We have the following which we state for curves and points defined over $\Q_p$: 
\begin{thm} \label{mainalginBB12}
Suppose the curve $\tilde{X}$ is defined over $\Q_p$ and the polynomial $g(x)$ is monic. Let $P$ and $\nu$ be as in Lemma~\ref{tinywithpoles}. For points $R,S\in Z(\Q_p)$ in distinct non-Weierstrass residue discs, not equal to $P$ and $w(P)$, we have
\[\BCint_S^R \nu = \frac{1}{1-p}\bigg(\Psi(\alpha)\cup\Psi(\beta)+\sum_{A\in \tilde{X}(\C_p)}\operatorname{Res}_A\bigg(\alpha\int\beta\bigg)-\BCint_{\phi(S)}^S\nu-\BCint_R^{\phi(R)}\nu\bigg)\]
where $\alpha = \phi^*\nu-p\nu$, $\beta$ is a form with $\operatorname{Res}(\beta) = R-S$.
\end{thm}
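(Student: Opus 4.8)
The plan is to separate the argument into a Frobenius change-of-variables step, which produces the scalar $\tfrac{1}{1-p}$ together with the two tiny-integral corrections, and a reciprocity step, which evaluates the remaining integral $\BCint_S^R\alpha$ in terms of the cup product and a sum of residues.

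First I would exploit that the Frobenius lift $\phi$ preserves residue discs and perform analytic continuation along Frobenius. By the definition $\phi^*\nu = p\nu + \alpha$, the change-of-variables property $\BCint_S^R\phi^*\nu = \BCint_{\phi(S)}^{\phi(R)}\nu$ together with additivity over the concatenation $\phi(S)\to S\to R\to\phi(R)$ gives
\[p\,\BCint_S^R\nu + \BCint_S^R\alpha = \BCint_{\phi(S)}^S\nu + \BCint_S^R\nu + \BCint_R^{\phi(R)}\nu.\]
Solving for $\BCint_S^R\nu$ produces the factor $\tfrac{1}{1-p}$ and isolates $\BCint_{\phi(S)}^S\nu$ and $\BCint_R^{\phi(R)}\nu$, which are tiny integrals (their endpoints share a residue disc because $\phi$ fixes residue discs) and are computed by Lemmas~\ref{tinyintegral} and~\ref{tinywithpoles}. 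This reduces the theorem to the single identity $\BCint_S^R\alpha = \Psi(\alpha)\cup\Psi(\beta) + \sum_A\operatorname{Res}_A(\alpha\int\beta)$.

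For this identity, let $F_\alpha = \int\alpha$ and $F_\beta = \int\beta$ denote global Coleman primitives. Since $\beta$ is of the third kind with residue divisor $(R)-(S)$ and $\alpha$ is regular at $R$ and $S$, the Coleman differential $F_\alpha\,\beta$ has residues $F_\alpha(R)$ at $R$ and $-F_\alpha(S)$ at $S$, so these two points contribute exactly $\BCint_S^R\alpha = F_\alpha(R)-F_\alpha(S)$. The heart of the matter is a $p$-adic analogue of the Riemann bilinear relations: the total residue of $F_\alpha\,\beta$ over the whole curve equals the cup product $\Psi(\alpha)\cup\Psi(\beta)$, the nonvanishing reflecting the multivaluedness of $F_\alpha$ and being computed via Serre's formula $[\mu_1]\cup[\mu_2]=\sum_P\operatorname{Res}_P(\mu_2\int\mu_1)$ extended through $\Psi$. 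Subtracting the residues at the poles of $\alpha$ and converting them from $F_\alpha\,\beta$ to $\alpha\,F_\beta$ by integration by parts, $d(F_\alpha F_\beta) = \alpha F_\beta + F_\alpha\beta$, turns those contributions into the sum $\sum_A\operatorname{Res}_A(\alpha\int\beta)$ of the statement; the cohomological cup product is then evaluated using the matrix of Frobenius from Theorem~\ref{mainalginBBK}.

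The main obstacle is establishing this reciprocity law and executing the integration-by-parts bookkeeping cleanly. Because $\alpha = \phi^*\nu - p\nu$ is itself only of the third kind — its residues are nonzero, although they cancel within each residue disc — the primitive $F_\alpha$ acquires logarithmic singularities at the poles of $\alpha$, so $F_\alpha F_\beta$ fails to be meromorphic there and one cannot naively apply the residue theorem. Controlling these logarithmic terms, and verifying that the global residue obstruction is precisely $\Psi(\alpha)\cup\Psi(\beta)$ rather than some other correction, is where I expect the real work to lie; this rests on Besser's theory of $p$-adic local symbols \cite{Besser2000} and the reciprocity results of Coleman \cite{coleman1989}, exactly as in \cite{BBComputing}. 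Once the identity is in hand, combining it with the Frobenius step of the first paragraph yields the stated formula.
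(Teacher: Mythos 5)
Your proposal is correct and takes essentially the same route as the paper, whose entire proof of this theorem is the citation to Algorithm 4.8 and Remark 4.9 of \cite{BBComputing}: your first paragraph is exactly the Frobenius change-of-variables computation there (writing $\phi^*\nu = p\nu + \alpha$, using $\BCint_S^R\phi^*\nu = \BCint_{\phi(S)}^{\phi(R)}\nu$ and additivity to solve for $\BCint_S^R\nu$ with the factor $\frac{1}{1-p}$ and the two tiny integrals), and your reduction to the identity $\BCint_S^R\alpha = \Psi(\alpha)\cup\Psi(\beta)+\sum_A\operatorname{Res}_A\big(\alpha\int\beta\big)$ is precisely the cup-product/residue formula from Besser's local-symbol theory on which the cited algorithm rests. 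Deferring that reciprocity identity to \cite{Besser2000} and \cite{BBComputing} is at the same level of detail as the paper itself, and you correctly identify it (rather than the Frobenius bookkeeping) as the genuinely hard step.
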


\begin{proof}
See Algorithm $4.8$ and Remark $4.9$ in \cite{BBComputing}.
\end{proof}

\begin{rem}\label{evendegreecase}
The generalization of Theorem~\ref{mainalginBB12} to even degree case will be discussed in \cite{gajovic}. Combining this with the techniques in \cite{balakrishnan2015coleman}, which extend the algorithms in \cite{BBKExplicit} to even degree models, one should be able to do the computation above for even degree case.
\end{rem}

Now, we consider the case where $g(x)$ is of degree at most $2$. The curve $\tilde{X}$ is rational and therefore $H^1_\dR(\tilde{X})$ is trivial. By the exact sequence \eqref{exactseqfirstdeRham}, our form $\eta$ will be sum of an exact form $dF$ and forms with simple poles. Moreover, using the equation $y^2=g(x)$, one can easily express the non-exact part as a sum of logarithmic differentials $c_i dF_i/F_i$ for constants $c_i$. This gives, 
\[\BCint \eta = F + \sum_{i} c_i \operatorname{Log}(F_i).\]

\section{Decomposition of $1$-forms with Specified Poles}\label{Decompositionof1forms}

We will now consider $1$-forms with poles in a specified set. Let $X$ be the good reduction hyperelliptic curve defined by $y^2=g(x)$ where $g(x)$ is of degree $d$. Moreover, we assume that the polynomial $g(x)$ is monic with integral coefficients in some finite extension of $\Q_p$; this assumption guarantees that the $p$-adic absolute value of roots of $g(x)$ are at most $1$. Let $Y$ be a basic wide open in $X$ contained in $B(0,R)$ for some $R\in \|\bK^*\|_p$ with $R>1$

Let  $T=\{\beta_1,\dots,\beta_\ell\}$ be a subset of $\A^1(\bK)$ for some finite extension $\bK$ of $\Q_p$. We will study $1$-forms of the form
\[\eta=x^{n_{\infty}}\prod_{j=1}^\ell \frac{1}{(x-\beta_{j})^{n_j}}\frac{dx}{2y}\]
for nonnegative integers $n_1,\dots,n_\ell,n_{\infty}$. We will further suppose that $\|\beta_i\|_p\leq 1$ for all $i$. Below, we will make 
the following assumptions: for $i=1,\dots,k$, we have $\|g(\beta_i)\|_p=1$; and for $i=k+1,\dots,\ell$, we have $g(\beta_i)=0$ and 
$\|g'(\beta_i)\|_p=1$. 

We will soon need to consider a series of $1$-forms whose terms are of the above form. To integrate them, we will interchange integration and summation using Proposition~\ref{swapintsum}. We will provide an algorithm to express the $1$-forms in terms of our given basis: the $1$-form $\eta$ can be written as 
\begin{equation} \label{e:decompofeta}
\eta=dF+\sum_{i=0}^{d-2} c_i\omega_i + \sum_{j=1}^k d_j\nu_j
\end{equation}
where $F$ is an analytic function on $Y$. Furthermore, we will find bounds on $c_i$, on $d_j$, and on the maximum value of the norm of $F$ on $Y$ in Proposition~\ref{p:primitive}. 

\begin{rem} \label{exactdifferentials}
We will make use of two types of exact $1$-forms.
\begin{enumerate}
    \item For a positive integer $m$, consider 
    \[d\left(\frac{y}{(x-\beta)^m}\right) = \frac{(x-\beta)g'(x)-2mg(x)}{(x-\beta)^{m+1}}\frac{dx}{2y}.\]
    Such a form has poles at the points above $\beta$ and possibly also at the point(s) at infinity. If $\beta$ is a root of $g(x)$, the pole is of order $2m$ at $\pi^{-1}(\beta)$; in fact, if we write $\Symb$ for the monomial involving the highest order power of $(x-\beta)^{-1}$, we have
    \[\Symb\left(\frac{(x-\beta)g'(x)-2mg(x)}{(x-\beta)^{m+1}}\frac{dx}{2y}\right)=\Symb\left(\frac{(1-2m)g'(\beta)}{(x-\beta)^m}\frac{dx}{2y}\right).\]
    If $\beta$ is not a root of $g(x)$, there are poles of order $m+1$ at each of the points of $\pi^{-1}(\beta)$; in fact, we have
    \[\Symb\left(\frac{(x-\beta)g'(x)-2mg(x)}{(x-\beta)^{m+1}}\frac{dx}{2y}\right)=\Symb\left(\frac{-2mg(\beta)}{(x-\beta)^{m+1}}\frac{dx}{2y}\right).\]
    The two cases differ because $y$ is a uniformizer in one case, while $x-\beta$ is a uniformizer in the other.
    \item For a nonnegative integer $m$, consider
    \[d(x^my) = \big(x^mg'(x)+2 mx^{m-1}g(x)\big)\frac{dx}{2y}.\]
    Such a form has poles at the point(s) at infinity. Notice that the leading coefficient of $x^mg'(x)+2 mx^{m-1}g(x)$
    is $d+2m$ as the polynomial $g(x)$ is monic.
\end{enumerate}
\end{rem}

\subsection{Principal Parts}
We will write our $1$-form as in \eqref{e:decompofeta} by subtracting off the exact $1$-forms in Remark~\ref{exactdifferentials} to cancel the non-simple poles. To do so, we use the language of principal parts. 

\begin{defn} Let $\alpha$ be a smooth point of a curve $X$ and pick a uniformizer $t$ on $X$ for $\alpha$. 
For a meromorphic function $h$, the \defi{principal part of $h$} near $\alpha$ is the polynomial in $t^{-1}$ given by the negative degree terms in the Laurent expansion of $h$ in $t$. Let $\omega$ be a meromorphic $1$-form on $X$ that is regular and non-vanishing at $\alpha$. For a meromorphic $1$-form $\eta$ on $X$, $\frac{\eta}{\omega}$ is a meromorphic function defined in a punctured neighborhood of $\alpha$. The \defi{principal part} $\PP_{\omega,\alpha}(\eta)$ of $\eta$ near $\alpha$ with respect to $\omega$ is the principal part of $\frac{\eta}{\omega}$ near $\alpha$.
\end{defn}

For $\beta\in \A^1(\bK)$ and $\alpha\in\pi^{-1}(\beta)$, we have convenient choices for coordinates and $1$-forms. The $1$-form $\omega_0=\frac{dx}{2y}$ is regular and non-vanishing away the point(s) at infinity. Let $\eta$ be an odd $1$-form; then $\frac{\eta}{\omega_0}$ is invariant under the hyperelliptic involution. 

We first explain how to pick a uniformizer at Weierstrass points.  Let $\beta$ be a root of $g(x)$, then $y$ is a uniformizer at $\alpha=\pi^{-1}(\beta)$. However, we can pick a slightly more convenient uniformizer. We know that $\frac{x-\beta}{g(x)}$ does not vanish in a neighborhood of $\beta$ and so has an analytic square root $h(x)$ there. Then $w=yh(x)$ is a uniformizer at $\alpha$. Because the meromorphic function $\frac{\eta}{\omega_0}$ is invariant under the hyperelliptic involution, it can be written near $\alpha$ as a Laurent series in $w^2=x-\beta$. Therefore, the principal part of $\frac{\eta}{\omega_0}$ is a polynomial in $z=(yh(x))^{-2}=(x-\beta)^{-1}$. 

If $\beta$ is not a root of $g(x)$, then $\pi^{-1}(\beta)=\{\alpha_1,\alpha_2\}$ and $x-\beta$ is a uniformizer near both $\alpha_1$ and $\alpha_2$. Consequently, $\PP_{\omega_0,\alpha_1}(\eta)=\PP_{\omega_0,\alpha_2}(\eta)$. In this case, the principal part of $\frac{\eta}{\omega_0}$ is a polynomial in $z=(x-\beta)^{-1}$.

In any case, by using the Taylor expansion for $g(x)$ at $\beta$, we compute
\[\PP_{\omega_0,\alpha}\left(d\left(\frac{y}{(x-\beta)^m}\right)\right)
=\sum_{k=0}^{m}\frac{-m-k}{(m-k)!}g^{(m-k)}(\beta) z^{k+1}.\]
By a straightforward argument obtained by writing $x^k=(x-\beta+\beta)^k$ and using the integrality of binomial coefficients, one sees that the $p$-adic absolute value of the coefficients of the principal part are bounded above by $\max(1,\|\beta\|_p^{d})$. In particular, if $\|\beta\|_p\leq 1$, we have
\begin{equation} \label{e:ppexactbound}
\left\|\PP_{\omega_0,\alpha}\left(d\left(\frac{y}{(x-\beta)^m}\right)\right)\right\|_p\leq 1.
\end{equation} Here, for a polynomial $q(t)$, we define the value $\|q(t)\|_p$ as the maximum of the $p$-adic absolute value of its coefficients.

Recall that, for an integer $i$, $\omega_i$ is defined as $x^i\frac{dx}{2y}$. If $\eta$ is an odd $1$-form, $\frac{\eta}{\omega_i}$ is a meromorphic function on $\P^1$, and we may speak of its pole order at $\infty\in\P^1(\bK)$. In analogy with the above, we choose $\frac{1}{x}$ as a uniformizer at $\infty$. Write $\PP_{\omega_i,\infty}(\eta)$ for the principal part of $\frac{\eta}{\omega_i}$  considered as a meromorphic function on $\P^1$. Observe that for $\eta$ regular on the finite part of $Y$, if 
\[\frac{\eta}{\omega_0}=a_0+a_1x+\dots+a_{d-2}x^{d-2}\]
then
\[\PP_{\omega_{-1},\infty}(\eta)=\frac{\eta}{\omega_{-1}}=a_0\left(\frac{1}{x}\right)^{-1}+a_1\left(\frac{1}{x}\right)^{-2}+\dots+a_{d-2}\left(\frac{1}{x}\right)^{-(d-2)}\]
and
\[\eta=a_0\omega_0+a_1\omega_1+\dots+a_{d-2}\omega_{d-2}.\]
We will abuse notation and refer to the degree of the polynomial $\frac{\eta}{\omega_0}$ as the pole order at $\infty$. If the pole order is at most $d-2$, the above formula lets us determine the cohomology class of $\eta$. 
\subsection{Pole Reduction}
Using ideas similar to those of Tuitman \cite{Tuitman:counting1,Tuitman:counting2}, we will subtract off exact $1$-forms to lower the pole orders of $\eta$ at the $\beta$'s. We begin by cancelling the poles of $\eta$ of order greater than $1$ at non-Weierstrass points and the poles of $\eta$ at Weierstrass points. Then, we will cancel the simple poles at non-Weierstrass points by subtracting off multiples of $\nu_j$. The remainder $\eta'$ can be expressed in terms of the $\omega_i$'s by examining $\PP_{\omega_{-1},\infty}(\eta')$.

Define meromorphic $1$-forms $\mu_{\beta,m}$ by 
\[\mu_{\beta,m}=d\left(\frac{y}{(x-\beta)^m}\right).\]
We omit the proof of the following lemma (which is a computation in coordinates). 

\begin{lem} \label{l:ppatinfty} We have the following:
\begin{enumerate}
\item the pole order of $\infty$ of $\mu_{\beta,m}/\omega_{-1}$ is at most $d+1-m$,
\item the principal part of $\mu_{\beta,m}$ at $\infty$ obeys $\left\|\PP_{\omega_{-1},\infty}(\mu_{\beta,m})\right\|_p\leq 1$, and
\item the principal part of $d(x^my)$ at $\infty$ obeys $\|\PP_{\omega_{-1},\infty}(d(x^my))\|_p\leq 1$.
\end{enumerate}
\end{lem}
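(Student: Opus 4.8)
The plan is to reduce all three assertions to elementary computations with the rational functions $\mu_{\beta,m}/\omega_{-1}$ and $d(x^m y)/\omega_{-1}$, starting from the explicit formulas recorded in Remark~\ref{exactdifferentials}. Since $\omega_{-1}=\frac{1}{x}\frac{dx}{2y}$, dividing by $\omega_{-1}$ multiplies the coefficient of $\frac{dx}{2y}$ by $x$, so the factor $\frac{dx}{2y}$ cancels and we are left with honest rational functions of $x$:
\[\frac{\mu_{\beta,m}}{\omega_{-1}}=\frac{x\,N(x)}{(x-\beta)^{m+1}},\qquad N(x)=(x-\beta)g'(x)-2mg(x),\]
and
\[\frac{d(x^m y)}{\omega_{-1}}=x\big(x^m g'(x)+2m x^{m-1}g(x)\big).\]
By definition $\PP_{\omega_{-1},\infty}$ of each of these is the sum of its strictly positive powers of $x$ in the expansion at $\infty$ (equivalently, the strictly negative powers of the uniformizer $t=1/x$), i.e. the non-constant part of its polynomial part after partial fractions.

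For (1) I would simply count degrees. Because $g$ is monic of degree $d$, the polynomial $N(x)=(x-\beta)g'(x)-2mg(x)$ has degree at most $d$ (its leading term is $(d-2m)x^d$), so $xN(x)$ has degree at most $d+1$. Dividing by $(x-\beta)^{m+1}$, the polynomial part of $\mu_{\beta,m}/\omega_{-1}$ has degree at most $(d+1)-(m+1)=d-m$, which is $\le d+1-m$; this is exactly the asserted bound on the pole order at $\infty$ (indeed the true pole order is $d-m$, so the stated bound is not tight).

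For (2) and (3) the issue is integrality rather than degree. By hypothesis $g$ is monic with integral coefficients, i.e. of $p$-adic norm at most $1$, and $\|\beta\|_p\le 1$; hence $g'(x)$, $N(x)$, $xN(x)$, and $x\big(x^m g'(x)+2m x^{m-1}g(x)\big)$ all have integral coefficients (the integers $m,2m$ and the element $\beta$ are all integral). The divisor $(x-\beta)^{m+1}$ is monic with integral coefficients, so I would invoke that Euclidean division of an integral polynomial by a monic integral polynomial produces an integral quotient and remainder — the division algorithm only ever divides by the leading coefficient $1$. Thus the polynomial part of $\mu_{\beta,m}/\omega_{-1}$ is integral, and so is its sub-polynomial $\PP_{\omega_{-1},\infty}(\mu_{\beta,m})$, giving $\|\PP_{\omega_{-1},\infty}(\mu_{\beta,m})\|_p\le 1$. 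For (3) there is no denominator at all: $x\big(x^m g'(x)+2m x^{m-1}g(x)\big)$ is already an integral polynomial, so its principal part at $\infty$ has norm at most $1$.

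All of the above is routine, so I do not expect a genuine obstacle; the only place that needs care is keeping the definition of $\PP_{\omega_{-1},\infty}$ straight — that it extracts the positive-$x$-degree part of the expansion at $\infty$ — and checking that discarding the lower-order terms, the constant term, and the proper part at the finite poles cannot increase the norm. Given integrality of the full polynomial part, this last point is immediate, since the norm of a polynomial is the maximum of the norms of its coefficients.
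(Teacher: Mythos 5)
Your proof is correct, and it is precisely the ``computation in coordinates'' that the paper explicitly omits for this lemma: dividing by $\omega_{-1}$ turns everything into rational functions of $x$, the degree count on $xN(x)=x\big((x-\beta)g'(x)-2mg(x)\big)$ gives (1), and integrality of the quotient under Euclidean division by the monic integral polynomial $(x-\beta)^{m+1}$ (together with $\|\beta\|_p\le 1$ and the integrality of $g$) gives (2) and (3), exactly in the spirit of the bound \eqref{e:ppexactbound} at finite points. Your count in (1) in fact yields the sharper bound $d-m$ on the pole order, which of course implies the stated $d+1-m$.
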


Below, we will make use of Legendre's formula for the $p$-adic valuation of factorials for $p\neq 2$. We have the bounds
\[\frac{m}{p-1}-\lceil\log_p(m)\rceil\leq v_p(m!)\leq \frac{m}{p-1}.\]
From this, we can obtain the following bound on odd factorials:
\[v_p((2m-1)!!)\leq \frac{m}{p-1}+\lceil\log_p(m)\rceil\leq \frac{m}{p-1}+\log_p(m)+1.\]

\begin{lem} \label{polereductionnonWss}
Let $\beta\in\A^1(\bK)$ with $\|\beta\|_p\leq 1$ and $\|g(\beta)\|_p=1$. Take $\alpha\in\pi^{-1}(\beta)$ and set $z=(x-\beta)^{-1}$. Let $\eta$ be an odd meromorphic $1$-form on $X$ such that
\[m\coloneq\deg_z(\PP_{\omega_0,\alpha}(\eta))-1>0.\]
Then there exists a unique polynomial $q(t)\in \bK[t]$ of degree $m$ such that 
\[\eta'\coloneq\eta-d(q((x-\beta)^{-1})y)\]
has at worst simple poles at points above $\beta$
and 
\[\|q(t)\|_p\leq p^{m/(p-1)}\|\PP_{\omega_0,\alpha}(\eta)\|_p.\]
Moreover, $\PP_{\omega_{-1},\infty}(\eta-\eta')$ is a $\bK$-linear combination of $\PP_{\omega_{-1},\infty}(\mu_{\beta,1}),\dots, \PP_{\omega_{-1},\infty}(\mu_{\beta,d-1})$
with coefficients with norm at most 
\[p^{m/(p-1)}\|\PP_{\omega_0,\alpha}(\eta)\|_p.\]
\end{lem}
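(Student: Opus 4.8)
The plan is to perform pole reduction at $\beta$ one order at a time, using the exact forms $\mu_{\beta,m}=d(y/(x-\beta)^m)$ from Remark~\ref{exactdifferentials} to cancel the top-order term of the principal part $\PP_{\omega_0,\alpha}(\eta)$, which is a polynomial in $z=(x-\beta)^{-1}$ of degree $m+1$. Because $\beta$ is \emph{not} a root of $g(x)$ and $\|g(\beta)\|_p=1$, the second formula in Remark~\ref{exactdifferentials} shows that $\Symb(\mu_{\beta,m})$ agrees with $\Symb\big(\tfrac{-2m\,g(\beta)}{(x-\beta)^{m+1}}\tfrac{dx}{2y}\big)$, i.e. the leading coefficient of $\PP_{\omega_0,\alpha}(\mu_{\beta,m})$ in $z$ is $-2m\,g(\beta)$, which is a unit. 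So to kill the $z^{m+1}$-term of $\PP_{\omega_0,\alpha}(\eta)$, whose coefficient I call $a_{m+1}$, I subtract $c_m\,\mu_{\beta,m}$ with $c_m=a_{m+1}/(-2m\,g(\beta))$; this lowers the pole order from $m+1$ to at most $m$. Iterating down to order $2$ produces the polynomial $q(t)=\sum_{m} c_m\,t^m$ of degree exactly equal to the starting $m=\deg_z(\PP_{\omega_0,\alpha}(\eta))-1$, and $\eta'=\eta-d(q(z)\,y)$ then has at worst a simple pole above $\beta$. Uniqueness is immediate: the map sending $q$ to the principal-part-mod-simple-poles of $d(q(z)y)$ is triangular in the degree filtration with unit diagonal entries $-2m\,g(\beta)$, hence injective on polynomials of degree $\le m$ with zero constant term.

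Next I would track the norm bound on $q$. At each step the new coefficient is $c_m = a_{m+1}/(-2m\,g(\beta))$, and since $\|g(\beta)\|_p=1$ the only loss is the factor $1/2m$, contributing $v_p(m)$ to the valuation. Crucially, subtracting $c_m\mu_{\beta,m}$ modifies the lower-order coefficients $a_2,\dots,a_m$ by bounded (unit-coefficient) amounts coming from the lower-order terms of $\PP_{\omega_0,\alpha}(\mu_{\beta,m})$, together with the integrality estimate \eqref{e:ppexactbound} which gives $\|\PP_{\omega_0,\alpha}(\mu_{\beta,m})\|_p\le 1$. Accumulating the division losses $\sum v_p(m)$ across all steps is where Legendre's formula enters: the total $2$-power-free valuation loss is controlled by $v_p(m!)\le m/(p-1)$, which yields $\|q(t)\|_p\le p^{m/(p-1)}\|\PP_{\omega_0,\alpha}(\eta)\|_p$. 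I would set this up as an induction on $m$ where the inductive hypothesis bounds the norm of the current (reduced) principal part, and the one-step estimate multiplies the bound by at most $p^{v_p(m)}$; telescoping gives the stated $p^{m/(p-1)}$ factor.

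For the final claim about $\PP_{\omega_{-1},\infty}(\eta-\eta')$, note that $\eta-\eta'=d(q(z)y)=\sum_m c_m\,\mu_{\beta,m}$, so its principal part at infinity is the $\bK$-linear combination $\sum_m c_m\,\PP_{\omega_{-1},\infty}(\mu_{\beta,m})$ of $\PP_{\omega_{-1},\infty}(\mu_{\beta,1}),\dots,\PP_{\omega_{-1},\infty}(\mu_{\beta,m})$. Since $m\le d-1$ by hypothesis (the pole order at infinity of $\mu_{\beta,m}/\omega_{-1}$ is at most $d+1-m$ by Lemma~\ref{l:ppatinfty}, so the relevant $\mu_{\beta,m}$ all lie in the listed range up to $d-1$), these are among the asserted generators; the coefficients $c_m$ are exactly the coefficients of $q$, so their norm is bounded by $\|q(t)\|_p\le p^{m/(p-1)}\|\PP_{\omega_0,\alpha}(\eta)\|_p$, as claimed.

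I expect the main obstacle to be the bookkeeping in the second paragraph: making the induction on the principal part tight enough that the valuation losses from the successive divisions by $2m$ aggregate cleanly into the single clean Legendre bound $m/(p-1)$, while simultaneously verifying that the lower-order feedback terms produced by each $\mu_{\beta,m}$ never worsen the norm beyond what \eqref{e:ppexactbound} permits. The geometric reduction and uniqueness are formal once the leading-symbol computation of Remark~\ref{exactdifferentials} is in hand; the real care is in the $p$-adic estimates.
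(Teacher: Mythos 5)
Your proposal is correct and is in substance the paper's own argument: the paper packages your step-by-step elimination (which is literally its Algorithm~\ref{PoleRednonWeierstrass}) into a single upper-triangular linear system $L\colon V\to W$ with diagonal entries $(-2i)g(\beta)$ and off-diagonal entries integral by \eqref{e:ppexactbound}, and then obtains existence, uniqueness, and the norm bound in one stroke via Cramer's rule, $\|\det(M)\|_p=\|2^m\,m!\,g(\beta)^m\|_p\geq p^{-m/(p-1)}$ by Legendre's formula --- which is exactly your telescoped product of per-step losses $p^{v_p(j)}$, $j=1,\dots,m$. Two harmless slips in your write-up: the diagonal entries $-2j\,g(\beta)$ are nonzero but \emph{not} units when $p\mid j$ (nonzero suffices for injectivity over $\bK$, and their non-unitness is precisely the source of the $p^{m/(p-1)}$ factor, so calling them units contradicts your own second paragraph); and there is no hypothesis $m\leq d-1$ --- rather, for $j\geq d$ one has $\PP_{\omega_{-1},\infty}(\mu_{\beta,j})=0$, since $\mu_{\beta,j}/\omega_{-1}=x\bigl((x-\beta)g'(x)-2jg(x)\bigr)/(x-\beta)^{j+1}$ has degree $d-j\leq 0$ at infinity, so indices beyond $d-1$ drop out of the linear combination automatically.
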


\begin{proof}
Let $V$ be the $m$-dimensional $\bK$-vector space spanned by the meromorphic functions 
\[\{(x-\beta)^{-1}y,(x-\beta)^{-2}y,\dots,(x-\beta)^{-m}y\}\]
and let $W$ be the $\bK$-vector space spanned by $\{z^2,z^3,\dots,z^{m+1}\}$. Define
\begin{eqnarray*}
L\colon V&\to& W\\
h&\mapsto&T(\PP_{\omega_0,\alpha}(dh))
\end{eqnarray*}
where $T$ takes $z^1\mapsto 0$ and $z^i\mapsto z^i$ for $i\geq 2$. By Remark~\ref{exactdifferentials}, its matrix $M$ in these bases is upper triangular. In fact, the diagonal entries of $M$ are
\[M_{ii}=(-2i)g(\beta).\] 
As $g(\beta)\neq 0$, the matrix $M$ is invertible and we can find a polynomial $q(t)$ such that $q((x-\beta)^{-1})y\in V$ satisfies $L(q((x-\beta)^{-1})y)=T(\PP_{\omega_0,\alpha}(\eta))$. 

To get control over the coefficients of $q(t)$, we will use Cramer's rule. The coefficients of $q(t)$ are equal to $\det(M_{j})/\det(M)$ where $M_{j}$ is the matrix formed by replacing the $j$th column of $M$ by the coefficients of $T(\PP_{\omega_0,\alpha}(\eta))$. By Legendre's formula, we have
\[\|\det(M)\|_p\geq p^{-m/(p-1)}.\] 
By \eqref{e:ppexactbound}, the coefficients of $M$ are bounded above in $p$-adic absolute value by $1$, 
so 
\[\|\det(M_{j})\|_p\leq \|T(\PP_{\omega_0,\alpha}(\eta))\|_p.\]
Consequently, the coefficients of $q(t)$ are bounded above by
\[p^{m/(p-1)}\|\PP_{\omega_0,\alpha}(\eta)\|_p.\]
The bound on pole order at infinity and on the coefficients of the principal part at $\infty$ follow from Lemma~\ref{l:ppatinfty}.
\end{proof}

\begin{algorithm}[H] 
\label{PoleRednonWeierstrass}
\caption{\bf Pole reduction at finite non-Weierstrass points}

\KwIn{
\begin{itemize}
    \item $\alpha\in\pi^{-1}(\beta)$ where $\beta$ is a non-root of $g(x)$.
    \item An odd meromorphic form $\eta$ with pole at $\alpha$ of order $m$. 
\end{itemize}
}
\KwOut{A function $F$ such that the form $\eta-dF$ has at worst simple poles at points above $\beta$.}

\begin{enumerate}
    \item For $j=1,\dots,m-1$, compute the expansions \[\mu_{\beta,j} = \left(w_{-j-1}(x-\beta)^{-j-1} + \text{higher order terms}\right)
    \frac{dx}{2y}.\]
    \item Until $\eta$ has at worst a simple pole at $\alpha$, do the following: 
    \begin{enumerate}
        \item Compute the expansion
        \[\eta = \left(u_{-j}(x-\beta)^{-j} + \text{higher order terms}\right) \frac{dx}{2y}.\]
        \item Define $a_{j-1} = u_{-j}/w_{-j}$ and set $\eta \coloneq \eta - a_{j-1}\mu_{\beta,j-1}$.
    \end{enumerate}
    \item Return 
    \[F = \bigg(\frac{a_{m-1}}{(x-\beta)^{m-1}}+\frac{a_{m-2}}{(x-\beta)^{m-2}}+\dots+\frac{a_1}{x-\beta}\bigg)y. \]
\end{enumerate}
\end{algorithm}

\smallskip

Now, we consider a root $\beta$ of $g(x)$.

\begin{lem} 
\label{polereductionWss}
Let $\beta\in\A^1(\bK)$ be a root of $g(x)$ so that $\|\beta\|_p\leq 1$. Suppose $\|g'(\beta)\|_p=1$. Let $\alpha=\pi^{-1}(\beta)$ and set $z=(x-\beta)^{-1}$. Let $\eta$ be an odd meromoprhic $1$-form on $X$ such that
\[m\coloneq\deg_z(\PP_{\omega_0,\alpha}(\eta))>0.\]
Then there exists a unique polynomial $q(t)\in \bK[t]$ of degree $m$ such that 
\[\eta'\coloneq\eta-d(q((x-\beta)^{-1})y)\]
is regular at $\alpha$
and 
\[\|q(t)\|_p\leq mp^{1+m/(p-1)}\|\PP_{\omega_0,\alpha}(\eta)\|_p.\]
Moreover, $\PP_{\omega_{-1},\infty}(\eta-\eta')$ is a $\bK$-linear combination of $\PP_{\omega_{-1},\infty}(\mu_{\beta,1}),\dots, \PP_{\omega_{-1},\infty}(\mu_{\beta,d-1})$
with coefficients with norm at most 
\[mp^{1+m/(p-1)}\|\PP_{\omega_0,\alpha}(\eta)\|_p.\]
\end{lem}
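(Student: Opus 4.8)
The plan is to follow the exact strategy of Lemma~\ref{polereductionnonWss}, adapting it to the Weierstrass case where $y$ (rather than $x-\beta$) is the uniformizer at $\alpha$, which shifts the relevant principal parts and diagonal entries. First I would set up the analogous linear-algebra framework: let $V$ be the $m$-dimensional $\bK$-vector space spanned by $\{(x-\beta)^{-1}y,\dots,(x-\beta)^{-m}y\}$, and let $W$ be spanned by $\{z^1,z^2,\dots,z^m\}$ (note that here, unlike the non-Weierstrass case, the lowest-order surviving power is $z^1$ rather than $z^2$, because the symbol computation in Remark~\ref{exactdifferentials} shows $\mu_{\beta,m}$ has its top principal-part term in $z^m$ when $\beta$ is a root). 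I would then define $L\colon V\to W$ by $h\mapsto \PP_{\omega_0,\alpha}(dh)$, with no truncation operator $T$ needed this time, since the goal is to make $\eta'$ fully regular at $\alpha$ rather than merely reduce to a simple pole.

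The next step is to verify that the matrix $M$ of $L$ in these bases is upper triangular with diagonal entries controlled by $g'(\beta)$. From the first symbol formula in Remark~\ref{exactdifferentials}, namely $\Symb(\mu_{\beta,m}) = \Symb\bigl((1-2m)g'(\beta)(x-\beta)^{-m}\tfrac{dx}{2y}\bigr)$, the diagonal entry is $M_{mm} = (1-2m)g'(\beta)$. Since $\|g'(\beta)\|_p = 1$ by hypothesis and $1-2m$ is a unit for $p$ odd (as $1-2m$ is odd), each diagonal entry has norm $1$, so $M$ is invertible and there is a unique $q(t)$ of degree $m$ with $L(q((x-\beta)^{-1})y) = \PP_{\omega_0,\alpha}(\eta)$, yielding $\eta' = \eta - d(q((x-\beta)^{-1})y)$ regular at $\alpha$.

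For the norm bound I would again invoke Cramer's rule, writing each coefficient of $q(t)$ as $\det(M_j)/\det(M)$. By \eqref{e:ppexactbound} the entries of $M$ have norm at most $1$, so $\|\det(M_j)\|_p \leq \|\PP_{\omega_0,\alpha}(\eta)\|_p$. The subtlety, and the main obstacle, lies in lower-bounding $\|\det(M)\|_p$: because $M$ is upper triangular the determinant is the product $\prod_{i=1}^m (1-2i)g'(\beta)$ of diagonal entries, each of norm $1$, so naively $\|\det(M)\|_p = 1$. The extra factor $mp^{1+m/(p-1)}$ in the claimed bound must therefore come not from the determinant but from the off-diagonal structure and the conversion between the $z$-adic coefficients and the coefficients of $q(t)$ as a polynomial in $t=(x-\beta)^{-1}$, together with the factorial-type denominators appearing in the principal-part formula $\PP_{\omega_0,\alpha}(d(y(x-\beta)^{-m})) = \sum_k \tfrac{-m-k}{(m-k)!}g^{(m-k)}(\beta)z^{k+1}$. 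I expect the factor $p^{m/(p-1)}$ to arise exactly as in Lemma~\ref{polereductionnonWss} from Legendre's bound $v_p((m-k)!) \leq m/(p-1)$ controlling the inverse factorials, the extra $p^1$ from the $\lceil\log_p\rceil$ correction in Legendre's formula, and the polynomial factor $m$ from summing the $m$ contributions across the triangular back-substitution. Carefully tracking these denominators through Cramer's rule is the delicate calculation, but it is routine once the framework is in place; the final sentence on the pole order and principal part at $\infty$ then follows verbatim from Lemma~\ref{l:ppatinfty} exactly as in the non-Weierstrass case.
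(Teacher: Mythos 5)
Your framework coincides with the paper's: the same spaces $V$ and $W=\operatorname{span}\{z,z^2,\dots,z^m\}$, the same map $L(h)=\PP_{\omega_0,\alpha}(dh)$ with no truncation operator, the same upper triangular matrix $M$ with $M_{ii}=(1-2i)g'(\beta)$, Cramer's rule, and Lemma~\ref{l:ppatinfty} at the end. But there is a genuine error at precisely the step you flag as the main obstacle. You assert that each diagonal entry has norm $1$ because $1-2i$ is odd; oddness makes $1-2i$ a $2$-adic unit, not a $p$-adic unit for the odd prime $p$ at hand (take $p=3$, $i=2$: then $1-2i=-3$). Consequently your ``naive'' computation $\|\det(M)\|_p=1$ is false: since $\det(M)=\pm(2m-1)!!\,g'(\beta)^m$ and $\|g'(\beta)\|_p=1$, one has $\|\det(M)\|_p=\|(2m-1)!!\|_p$, which can be strictly smaller than $1$. (Invertibility of $M$, hence existence and uniqueness of $q(t)$, does survive, since each $1-2i$ is a nonzero integer.)

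This determinant is exactly where the factor $mp^{1+m/(p-1)}$ comes from in the paper: the bound on odd factorials stated just before Lemma~\ref{polereductionnonWss}, namely $v_p((2m-1)!!)\leq \frac{m}{p-1}+\lceil\log_p(m)\rceil\leq \frac{m}{p-1}+\log_p(m)+1$, gives $\|\det(M)\|_p\geq \frac{1}{m}p^{-1-m/(p-1)}$, and Cramer's rule together with $\|\det(M_j)\|_p\leq\|\PP_{\omega_0,\alpha}(\eta)\|_p$ yields the stated bound directly. Your alternative explanation --- that the loss arises from the off-diagonal structure, from converting $z$-adic data into the coefficients of $q(t)$, or from the inverse factorials $1/(m-k)!$ in the principal-part formula --- does not hold up: the combinations $g^{(m-k)}(\beta)/(m-k)!$ are Taylor coefficients of $g$ at $\beta$ and are bounded by $1$ in norm by \eqref{e:ppexactbound} (this is exactly why $\|\det(M_j)\|_p\leq\|\PP_{\omega_0,\alpha}(\eta)\|_p$, which you yourself invoke), and the coefficient of $(x-\beta)^{-j}y$ in the solution is literally the coefficient of $t^j$ in $q(t)$, so there is no conversion loss. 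The instructive contrast with Lemma~\ref{polereductionnonWss} is that there $\det(M)=\pm 2^m m!\,g(\beta)^m$, so only $\|m!\|_p\geq p^{-m/(p-1)}$ is needed, whereas here the odd double factorial $(2m-1)!!=(2m)!/(2^m m!)$ produces the extra factor $mp$.
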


\begin{proof}
Let $V$ be the $m$-dimensional $\bK$-vector space spanned by the meromorphic functions 
\[\{(x-\beta)^{-1}y,(x-\beta)^{-2}y,\dots,(x-\beta)^{-m}y\}\]
and let $W$ be the $\bK$-vector space spanned by $\{z,z^2,z^3,\dots,z^m\}$.
Define the map
\begin{eqnarray*}
L\colon V&\to& W\\
h&\mapsto&\PP_{\omega_0,\alpha}(dh).
\end{eqnarray*}
By Remark~\ref{exactdifferentials}, its matrix $M$ (in these bases) is an upper triangular matrix with diagonal entries
\[M_{ii}=(1-2i)g'(\beta).\]
Because $\|g'(\beta)\|_p=1$, $M$ is nonsingular and we can find $q((x-\beta)^{-1})y\in V$ with $L(q((x-\beta)^{-1})y)=\PP_{\omega_0,\alpha}(\eta)$. 

Again, we will use Cramer's rule to get control over the coefficients
of $h$. The determinant of $M$ has $p$-adic absolute value
\[\|\det(M)\|_p=\|(2m-1)!!\|_p\geq \frac{1}{m}p^{-1-m/(p-1)}\] where the last inequality follows from Legendre's formula for odd factorials. Let $M_{j}$ be the matrix formed by replacing the $j$th column of $M$ by the coefficients of $\PP_{\omega_0,\alpha}(\eta)$. By \eqref{e:ppexactbound}, the coefficients of $M$ are bounded above in $p$-adic absolute value by $1$, 
so 
\[\|\det(M_{j})\|_p\leq \|\PP_{\omega_0,\alpha}(\eta)\|_p.\]
Consequently, the coefficients of $p(t)$ are bounded above by
\[mp^{1+m/(p-1)}\|\PP_{\omega_0,\alpha}(\eta)\|_p.\]
The bound on pole order at infinity and on the coefficients of the principal part at $\infty$ again follow from Lemma~\ref{l:ppatinfty}.
\end{proof}

\begin{algorithm}[H] 
\label{PoleRedWeierstrass}
\caption{\bf Pole reduction at finite Weierstrass points}

\KwIn{
\begin{itemize}
    \item $\alpha=\pi^{-1}(\beta)$ where $\beta$ is a root of $g(x)$.
    \item An odd meromorphic form $\eta$ with pole at $\alpha$ of order $2m$. 
\end{itemize}
}
\KwOut{A function $F$ such that the form $\eta-dF$ is regular at $\alpha$.}

\begin{enumerate}
    \item For $j=1,\dots,m$, compute the expansions \[\mu_{\beta,j} = \left( w_{-j}(x-\beta)^{-j} + \text{higher order terms}\right)\frac{dx}{2y}.\]
    \item Until $\eta$ is regular at $\alpha$, do the following:
    \begin{enumerate}
        \item Compute the expansion 
        \[\eta = \left(u_{-j}(x-\beta)^{-j} + \text{higher order terms}\right)\frac{dx}{2y}.\]
        \item Define $a_{j} = u_{-j}/w_{-j}$ and set $\eta \coloneq \eta - a_{j}\mu_{\beta,j}$.    \end{enumerate}
    \item Return \[F=\bigg(\frac{a_m}{(x-\beta)^{m}}+\frac{a_{m-1}}{(x-\beta)^{m-1}}+\dots+\frac{a_1}{x-\beta}\bigg)y.\]
\end{enumerate}
\end{algorithm}

\smallskip

The main difference between Algorithms~\ref{PoleRednonWeierstrass} and \ref{PoleRedWeierstrass} is that, by subtracting off exact forms, poles at Weierstrass points can be removed completely but only non-simple poles can be removed at non-Weierstrass points. 

We will also need to lower the power of $x$ in the numerator of a $1$-form. This is the order reduction step in Kedlaya's algorithm.

\begin{lem} Let $\eta$ be an odd meromorphic $1$-form on $X$ such that
\[m\coloneq\deg_x(\PP_{\omega_{-1},\infty}(\eta))-(d-1)>0.\]
Let $T$ be the truncation of a polynomial to degree $d-1$ and $U=\operatorname{Id}-T$.
Then there exists a unique polynomial $q(t)\in \bK[t]$ of degree $m$ such that 
\[\eta'\coloneq\eta-d(q(x)y)\]
has $\deg_x(\PP_{\omega_{-1},\infty}(\eta'))\leq d-1$ and
\[\|q(t)\|_p\leq d(d+m)p^{2+m/(p-1)}\|U(\PP_{\omega_{-1},\infty}(\eta))\|_p.\]
Moreover,
$T(\PP_{\omega_{-1},\infty}(\eta-\eta'))$ has coefficients with norm at most
\[d(d+m)p^{2+m/(p-1)}\|U(\PP_{\omega_{-1},\infty}(\eta))\|_p.\]
\end{lem}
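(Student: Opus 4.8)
The plan is to mirror the proofs of Lemmas~\ref{polereductionnonWss} and \ref{polereductionWss}, replacing the local reduction at a finite point by the ``order reduction'' at infinity from Kedlaya's algorithm. Since $\eta$ is odd and regular on the finite part of $Y$, the quotient $\eta/\omega_0$ is a polynomial in $x$, and the hypothesis $m=\deg_x(\PP_{\omega_{-1},\infty}(\eta))-(d-1)>0$ records that its pole at infinity exceeds the allowable order by $m$. The exact forms to subtract are the $d(x^j y)$ of Remark~\ref{exactdifferentials}(2): by that remark the leading coefficient of $d(x^j y)/\omega_0$ equals $d+2j$, so these forms contribute to the pole at infinity in a controlled, triangular fashion.

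First I would introduce the $m$-dimensional $\bK$-vector space $V=\operatorname{span}\{xy,x^2y,\dots,x^m y\}$ and the $m$-dimensional space $W$ of principal parts at infinity of pole order $d,d+1,\dots,d+m-1$, together with the map
\[
L\colon V\to W,\qquad h\mapsto U\big(\PP_{\omega_{-1},\infty}(dh)\big),
\]
where $U=\operatorname{Id}-T$ projects onto the part of pole order at least $d$. By Remark~\ref{exactdifferentials}(2) the matrix $M$ of $L$ in the monomial bases is upper triangular with diagonal entries $M_{jj}=d+2j$, and by Lemma~\ref{l:ppatinfty}(3) all of its entries have $p$-adic absolute value at most $1$. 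Since $d+2j$ is a nonzero integer for each $j=1,\dots,m$, the matrix $M$ is invertible over the characteristic-zero field $\bK$; solving $L(q(x)y)=U(\PP_{\omega_{-1},\infty}(\eta))$ then produces the unique $q$ of degree $m$ (with no constant term) for which $\eta'=\eta-d(q(x)y)$ has pole order at most $d-1$ at infinity.

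For the norm bounds I would again invoke Cramer's rule: writing the coefficients of $q$ as $\det(M_j)/\det(M)$, the estimate $\|\PP_{\omega_{-1},\infty}(d(x^j y))\|_p\le 1$ from Lemma~\ref{l:ppatinfty}(3) gives $\|\det(M_j)\|_p\le\|U(\PP_{\omega_{-1},\infty}(\eta))\|_p$, so everything reduces to a lower bound on $\|\det(M)\|_p=\big\|\prod_{j=1}^m(d+2j)\big\|_p$. The claim about $T(\PP_{\omega_{-1},\infty}(\eta-\eta'))$ is then immediate: since $\eta-\eta'=d(q(x)y)=\sum_j a_j\,d(x^j y)$, its principal part at infinity is a $\bK$-combination of the $\PP_{\omega_{-1},\infty}(d(x^j y))$ with coefficients of norm at most $\|q(t)\|_p$, so Lemma~\ref{l:ppatinfty}(3) and the ultrametric inequality bound its truncation by $\|q(t)\|_p$.

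The main obstacle is the valuation estimate for $\prod_{j=1}^m(d+2j)$. Unlike the earlier lemmas, where $\det(M)$ was essentially a factorial or a double factorial to which Legendre's formula applied directly, here $\det(M)$ is a product over the arithmetic progression $d+2,d+4,\dots,d+2m$. The key point is that, because $p$ is odd, the common difference $2$ is a $p$-adic unit, so for each $k$ at most $\lceil m/p^k\rceil\le m/p^k+1$ of the factors are divisible by $p^k$, and no factor exceeds $d+2m$; summing over $k\le\log_p(d+2m)$ yields
\[
v_p\Big(\prod_{j=1}^m(d+2j)\Big)\le\frac{m}{p-1}+\log_p(d+2m).
\]
Since $d+2m\le d(d+m)$, this gives $\|\det(M)\|_p\ge\big(d(d+m)\big)^{-1}p^{-m/(p-1)}$ and hence the claimed bound $\|q(t)\|_p\le d(d+m)p^{2+m/(p-1)}\|U(\PP_{\omega_{-1},\infty}(\eta))\|_p$, with the factor $p^2$ absorbing the slack. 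Alternatively one can write the product as a ratio of factorials, shifting by $d/2$ in the even case and using the double-factorial bound recorded just before Lemma~\ref{polereductionnonWss} in the odd case, and then apply Legendre's formula with its $\lceil\log_p(\cdot)\rceil$ corrections directly.
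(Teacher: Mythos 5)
Your overall strategy is exactly the paper's — the same reduction map $h\mapsto U\big(\PP_{\omega_{-1},\infty}(dh)\big)$ between an $m$-dimensional space of functions $x^jy$ and an $m$-dimensional space of principal parts, solved by triangularity and bounded via Cramer's rule with the integrality inputs from Remark~\ref{exactdifferentials}(2) and Lemma~\ref{l:ppatinfty}(3) — but your indexing at infinity is off by one, and as written the linear-algebra step fails. Since $d(x^jy)/\omega_{-1}=x^{j+1}g'(x)+2jx^jg(x)$ has leading term $(d+2j)x^{d+j}$, your choice $V=\operatorname{span}\{xy,\dots,x^my\}$ produces images with top degrees $d+1,\dots,d+m$: the image of $x^my$ overflows your $W$ (whose top degree is $d+m-1$), so $L$ does not even map $V$ into $W$, and the matrix is not upper triangular with diagonal $d+2j$ in the way you claim. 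More seriously, the reduction cannot succeed: for $\eta'=\eta-d(q(x)y)$ to satisfy $\deg_x\PP_{\omega_{-1},\infty}(\eta')\leq d-1$ you must match all coefficients in degrees $d,\dots,d+m$; the degree-$(d+m)$ equation forces $a_m=0$, after which only $m-1$ unknowns remain for the $m$ equations in degrees $d,\dots,d+m-1$, and the degree-$d$ coefficient is left uncancelled in general. The paper instead takes $V=\operatorname{span}\{y,xy,\dots,x^{m-1}y\}$, i.e.\ $j=0,\dots,m-1$ — note that $d(y)=g'(x)\frac{dx}{2y}$ contributes precisely in degree $d$ of $\PP_{\omega_{-1},\infty}$, so the ``constant term'' you excluded is exactly what kills the last coefficient. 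Your exclusion is an over-literal analogy with Lemmas~\ref{polereductionnonWss} and \ref{polereductionWss}, where the spanning sets genuinely start at $(x-\beta)^{-1}y$; with the corrected basis the matrix is upper triangular with diagonal $M_{ii}=d+2(i-1)$, and $q$ comes out of degree $m-1$, matching Algorithm~\ref{PoleRedInfinity} (the statement's ``degree $m$'' is a slight overcount in the paper itself).

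Once the indexing is fixed, your determinant estimate is correct and is in fact a welcome concretization of the paper's tersest step: where the paper only says ``by arguments analogous to the above, considering the cases of $d$ even and odd separately'' to assert $\|\det(M)\|_p\geq \frac{1}{d(d+m)}p^{-2-m/(p-1)}$, your counting over the arithmetic progression — common difference $2$ a $p$-adic unit since $p$ is odd, so at most $m/p^k+1$ factors divisible by $p^k$, with no factor exceeding $d+2m$ — yields
\[
v_p\Big(\textstyle\prod_{i=0}^{m-1}(d+2i)\Big)\leq \frac{m}{p-1}+\log_p(d+2m),
\]
hence $\|\det(M)\|_p\geq \frac{1}{d+2m}\,p^{-m/(p-1)}\geq \frac{1}{d(d+m)}\,p^{-m/(p-1)}$ (using $d\geq 2$, which holds in this setting), strictly stronger than what the lemma needs; the $p^2$ in the statement is, as you observe, pure slack. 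The remaining steps — integrality of the entries of $M$ giving $\|\det(M_j)\|_p\leq \|U(\PP_{\omega_{-1},\infty}(\eta))\|_p$, and the bound on $T(\PP_{\omega_{-1},\infty}(\eta-\eta'))$ from Lemma~\ref{l:ppatinfty}(3) together with the ultrametric inequality — coincide with the paper's proof.
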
 

\begin{proof}
Let $V$ be the $m$-dimensional $\bK$-vector space spanned by the meromorphic functions 
\[\{y,xy,\dots,x^{m-1}y\}\]
and let $W$ be the $\bK$-vector space spanned by $\{x^{d},x^{d+1},\dots,x^{d+m-1}\}$.
Define the map
\begin{eqnarray*}
L\colon V&\to& W\\
h&\mapsto&U(\PP_{\omega_{-1},\infty}(dh)).
\end{eqnarray*}
By Remark~\ref{exactdifferentials}, its matrix $M$ in these bases is upper triangular with diagonal entries
\[M_{ii}=d+2(i-1).\]
Hence we can find $q(x)y\in V$ with $L(q(x)y)=U(\PP_{\omega_{-1},\infty}(\eta))$. By arguments analogous to the above, considering the cases of $d$ even and odd seperately,
we have
\[\|\det(M)\|_p\geq \frac{1}{d(
d+m)}p^{-2-m/(p-1)}.\] 
Let $M_j$ be the matrix $M$ with the $j$th column replaced by the coefficients of 
$U(\PP_{\omega_{-1},\infty}(\eta))$. The coefficients of $M$ are integral and so
\[\|\det(M_{j})\|_p\leq \|U(\PP_{\omega_{-1},\infty}(\eta))\|_p.\] 
Consequently, the coefficients of $q(x)$ are bounded above by
\[d(d+m)p^{2+m/(p-1)}\|U(\PP_{\omega_{-1},\infty}(\eta))\|_p.\]
The bound on the coefficients of
$T(\PP_{\omega_{-1},\infty}(\eta-\eta'))$ follows from Lemma~\ref{l:ppatinfty}.
\end{proof}

\begin{algorithm}[H] 
\label{PoleRedInfinity}
\caption{\bf Pole reduction at infinity}

\KwIn{An odd meromorphic form $\eta$ such that $\frac{\eta}{\omega_{0}}$ has degree $d-2+m$ for some positive integer $m$.}
\KwOut{A function $F$ such that the $\frac{\eta-dF}{\omega_{0}}$ has degree at most $d-2$.}

\begin{enumerate}
    \item For $j=0,\dots,m-1$, compute the expansions
    \[d(x^jy) = \left(w_{d-1+j}x^{d-1+j} + \text{lower order terms}\right)\frac{dx}{2y}.\]
    \item Until $\frac{\eta}{\omega_0}$ has degree at most $d-2$, do the following:
    \begin{enumerate}
        \item Compute the expansion \[\eta = \left(u_{d-1+j}x^{d-1+j} + \text{lower order terms}\right)\frac{dx}{2y}.\]
        \item Define $a_{j} = u_{d-1+j}/w_{d-1+j}$ and set $\eta = \eta - a_{j}d(x^jy)$.
        \end{enumerate}
    \item Return \[F=(a_{m-1}x^{m-1}+a_{m-2}x^{m-2}+\dots+a_0)y.\]
\end{enumerate}
\end{algorithm}

\smallskip

We can now apply the algorithms described above to find a primitive of $\eta$. We first subtract exact forms from $\eta$ to remove the non-simple poles over non-roots of $g(x)$ and to remove the poles over roots of $g(x)$. Then, we reduce the pole order at $\infty$. Because the exact forms only affect the principal parts of one finite point at a time, we have the following: 

\begin{prop} \label{p:primitive}
Let $f_{\omega_0},\dots,f_{\omega_{d-2}}$ and $f_{\nu_1},\dots,f_{\nu_k}$ be Coleman primitives of $\omega_1,\dots,\omega_{d-2}$ and $\nu_1,\dots,\nu_k$, respectively.  Let $\eta$ be an odd $1$-form on $X$ such that $\frac{\eta}{\omega_0}$ has poles at points $\{\beta_1,\dots,\beta_{\ell},\infty\}\subset \P^1(\bK)$ of order $n_1,\dots,n_\ell,n_\infty$. Suppose $\beta_1,\dots,\beta_k$ are not roots of $g(x)$ and $\beta_{k+1},\dots,\beta_{\ell}$ are roots of $g(x)$. Moreover, we will suppose $\|\beta_i\|_p\leq 1$ for all $i$, $\|g(\beta_i)\|_p=1$ for $i=1,\dots,k$ and $\|g'(\beta_i)\|_p=1$ for $i=k+1,\dots,\ell$. Let $\alpha_i\in \pi^{-1}(\beta_i)$. Then $\eta$ has a Coleman primitive that is a linear combination of the following:
\begin{enumerate}
    \item $\frac{y}{(x-\beta_i)^j}$ where $1\leq j\leq n_i-1$  for $i=1,\dots,k$ with coefficient with norm at most 
    \[p^{n_i/(p-1)}\|\PP_{\omega_0,\alpha_i}(\eta)\|_p,\]
    \item $\frac{y}{(x-\beta_i)^j}$ where $1\leq j\leq n_i$ for $i=k+1,\dots,\ell$ with coefficient with norm at most
    \[n_ip^{1+n_i/(p-1)}\|\PP_{\omega_0,\alpha_i}(\eta)\|_p,\]
    \item $x^jy$ where $0\leq j\leq \max(n_\infty-d+2,2)$ with coefficient with norm at most the maximum of the following:
    \begin{enumerate}
       \item  $d(d+n_\infty)p^{2+n_\infty/(p-1)}\|U(\PP_{\omega_{-1},\infty}(\eta))\|_p,$
       \item  $\max_{i=1,\dots,k}\left(p^{n_i/(p-1)}\|\PP_{\omega_0,\alpha_i}(\eta)\|_p\right)$, and
       \item $\max_{i=k+1,\dots,\ell}\left(n_ip^{1+n_i/(p-1)}\|\PP_{\omega_0,\alpha_i}(\eta)\|_p\right).$
    \end{enumerate}
    \item $f_{\nu_i}$ for $i=1,\dots,k$ with coefficient equal to 
    \[\frac{\Res_{\alpha_i}(\eta)}{\Res_{\alpha_i}(\nu_i)},\]
    \item $f_{\omega_i}$ for $i=0,\dots,d-2$ with coefficient with norm at most 
    \[p^{2+\max(n_i/(p-1),n_\infty/(p-1))}\max(d(d+n_\infty),n_i)
    \max(\|\PP_{\omega_0,\alpha_i}(\eta)\|_p,\|\PP_{\omega_{-1},\infty}(\eta)\|_p)\] 
    where the maximum is taken over $\alpha\in\pi^{-1}(\{\beta_1,\dots,\beta_\ell\})$.
\end{enumerate}
\end{prop}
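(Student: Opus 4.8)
The plan is to realize the decomposition \eqref{e:decompofeta} explicitly by peeling off the poles of $\eta$ one locus at a time, using the three pole-reduction lemmas of this section together with residue matching at the simple poles, and then to pass to Coleman primitives by linearity. Concretely, I would construct an analytic function $F$ on $Y$, residue coefficients $d_j$, and cohomology coefficients $c_i$ so that
\[\eta = dF + \sum_{i=0}^{d-2} c_i\,\omega_i + \sum_{j=1}^{k} d_j\,\nu_j,\]
and then invoke that Coleman primitivization is $\bK$-linear and sends an exact form $dh$ (with $h$ analytic) to $h$; hence a Coleman primitive of $\eta$ is $F + \sum_i c_i f_{\omega_i} + \sum_j d_j f_{\nu_j}$. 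The five families in the proposition are exactly the building blocks of $F$ (items (1)--(3)), the functions $f_{\nu_i}$ (item (4)), and the functions $f_{\omega_i}$ (item (5)).

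First I would clear the finite poles. At each non-root $\beta_i$ ($1\le i\le k$) I would apply Lemma~\ref{polereductionnonWss} to subtract an exact form $d\!\left(q_i((x-\beta_i)^{-1})y\right)$, reducing $\eta$ to at worst a simple pole above $\beta_i$; the monomials of $q_i$ are the item (1) terms $y/(x-\beta_i)^j$ with $1\le j\le n_i-1$, and they carry precisely the bound $p^{n_i/(p-1)}\|\PP_{\omega_0,\alpha_i}(\eta)\|_p$ supplied by that lemma. At each root $\beta_i$ ($k<i\le\ell$) I would instead apply Lemma~\ref{polereductionWss}, which removes the pole entirely and yields the item (2) terms with bound $n_i p^{1+n_i/(p-1)}\|\PP_{\omega_0,\alpha_i}(\eta)\|_p$. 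The crucial structural point, noted before the statement, is that the exact form subtracted at $\beta_i$ has poles only above $\beta_i$ and at $\infty$; hence these reductions at distinct finite points do not interfere, and since exact forms are residue-free, the residue of the running form above each surviving simple pole $\alpha_i$ still equals $\Res_{\alpha_i}(\eta)$. This is what licenses the choice $d_i=\Res_{\alpha_i}(\eta)/\Res_{\alpha_i}(\nu_i)$ in item (4): subtracting $d_i\nu_i$ kills the last simple poles and leaves a form regular on the finite part.

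Next I would reduce the pole at $\infty$. Because $\nu_i/\omega_0=(x-\beta_i)^{-1}$ has no pole at $\infty$ while the forms $d(x^j y)$ have poles only at $\infty$ (Remark~\ref{exactdifferentials}), the infinity-reduction and the $\nu_i$-subtraction commute and may be carried out in either order; applying the pole-reduction lemma at $\infty$ (Algorithm~\ref{PoleRedInfinity}) produces the item (3) terms $x^j y$, with $j$ ranging as in the stated $\max(n_\infty-d+2,2)$, and brings $\deg_x(\eta'/\omega_0)$ down to at most $d-2$. At that stage $\eta'/\omega_0$ is a polynomial of degree $\le d-2$, so reading off its coefficients through the principal-part formula for $\PP_{\omega_{-1},\infty}$ (as recorded in the Principal Parts subsection, consistent with the exact sequence \eqref{exactseqfirstdeRham}) expresses the remainder as $\sum_{i=0}^{d-2} c_i\,\omega_i$, giving item (5). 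Assembling, $F$ is the sum of all the exact functions produced in the three reduction steps.

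The main obstacle is the bookkeeping of norms under composition, in particular the propagation of principal parts to $\infty$. Each finite reduction perturbs $\PP_{\omega_{-1},\infty}$, but Lemma~\ref{l:ppatinfty} bounds both those perturbations and the pole order they create uniformly by $1$; feeding these into the infinity-reduction lemma is exactly what produces the three-way maximum in item (3): the genuine contribution $d(d+n_\infty)p^{2+n_\infty/(p-1)}\|U(\PP_{\omega_{-1},\infty}(\eta))\|_p$ from the original pole at $\infty$, together with the spillover from the finite reductions controlled by the item (1) and item (2) constants. The final $\omega_i$-coefficient bound in item (5) then aggregates all of these, with the exponents tracked against Legendre's formula exactly as in the three reduction lemmas. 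The decoupling observation --- that subtractions at one locus leave the principal parts at the other loci untouched --- is the key that keeps the per-locus bounds from compounding multiplicatively; carefully verifying it, and matching the exponents, is where the real care is needed, although no idea beyond the lemmas already established is required.
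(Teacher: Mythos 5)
Your proposal is correct and follows essentially the same route as the paper's proof: apply Lemma~\ref{polereductionnonWss} and Lemma~\ref{polereductionWss} at the finite poles, use the decoupling of principal parts at distinct finite points to keep the bounds independent, track the spillover to $\infty$ via Lemma~\ref{l:ppatinfty} through the infinity-reduction step, and read off the $c_i$, $d_j$ from the degree-$\le d-2$ remainder and the residues. In fact your write-up is more explicit than the paper's own terse argument (which leaves the residue matching for the $\nu_j$'s and the ordering of the infinity step implicit in the surrounding text), so no gap to report.
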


\begin{proof}
Let $\eta$ be an odd $1$-form on $X$ such that $\frac{\eta}{\omega_0}$ has poles at points $\{\beta_1,\dots,\beta_{\ell}\}\subset \A^1(\bK)$ of order $n_1,\dots,n_\ell$. Then we apply the Lemma~\ref{polereductionnonWss} and Lemma~\ref{polereductionWss} to reduce the pole orders at the $\beta$'s. Because the exact forms that we subtract for one $\beta_i$ does not affect the principal parts at other $\beta_i$'s, the pole order reduction steps are independent, and we have the above bounds on coefficients. These operations do affect the principal parts above $\infty$ in degrees up to $d-2$ according to the bounds in the lemmas. This leads to the bounds for the coefficient of $f_{\omega_i}$.
\end{proof}

\begin{algorithm}[H] 
\caption{\bf Cohomology class for a meromorphic $1$-form}

\KwIn{A meromorphic form 
\[\eta = x^{n_{\infty}}\prod_{j=1}^{\ell}\frac{1}{(x-\beta_j)^{n_j}}\frac{dx}{2y}\] 
for nonnegative integers $n_1,\dots,n_\ell,n_{\infty}$.}
\KwOut{A function $F$ and constants $c_i,d_j$ such that
\[\eta=dF+\sum_{i=0}^{d-2} c_i\omega_i + \sum_{j=1}^k d_j\nu_j.\]}

\begin{enumerate}
   \item \emph{Constants $d_j$:} For $j=1,\dots,k$, pick $\alpha_j\in\pi^{-1}(\beta_j)$ and compute
   \[d_j=\frac{\Res_{\alpha_j}(\eta)}{\Res_{\alpha_j}(\nu_j)}.\]
   \item \emph{Non-Weierstrass points:} Using Algorithm~\ref{PoleRednonWeierstrass}, find a function $F_{nw}$ such that the form $\eta-dF_{nw}$ has at worst simple poles at points above $\beta_1,\dots,\beta_k$.
   \item \emph{Finite Weierstrass points:} Using Algorithm~\ref{PoleRedWeierstrass}, find a function $F_w$ such that the form $\eta-dF_{nw}-dF_w$ is regular at points above $\beta_{k+1},\dots,\beta_\ell$.
   \item \emph{The point(s) at infinity:} Using Algorithm~\ref{PoleRedInfinity}, find a function $F_\infty$ such that $\frac{\eta-dF_{nw}-dF_w-dF_\infty}{\omega_0}$ has degree at most $d-2$.
   \item \emph{Constants $c_i$:}  Compute
   \[\frac{\eta-dF_{nw}-dF_w-dF_\infty-\sum d_j\nu_j}{\omega_0}=c_0+c_1x+\dots+c_{d-2}x^{d-2}.\]
   \item Return
   \[F=F_{nw}+F_w+F_\infty,\ \{c_i\}_i,\ \{d_j\}_j.\]
   \end{enumerate}
\end{algorithm}

\section{Power Series Expansion} \label{s:powerseriesexpansion}

We will write a power series expansion of $1$-forms $x^i\frac{dx}{2y}$ on basic wide opens in a semistable covering of a hyperelliptic curve $\pi\colon X^{\an}\to \P^{1,\an}$ defined by $y^2=f(x)$ following the methods in Section~\ref{HyperellipticCoverings}.  We will suppose that $f(x)$ is a monic polynomial with integral coefficients in some finite extension of $\Q_p$ and, moreover, that the roots of $f(x)$ lie in a field $\bK$ of ramification degree $e$ over $\Q_p$. By our assumptions, these roots have $p$-adic valuation at most $1$. Let $S_f$ be the set of roots of $f(x)$. Let $U$ be an element of a good semistable covering of $\P^{1,\an}$ with respect to $S_f$. We have an embedding $\iota\colon U\to \P^{1,\an}$ such that the points of $\iota(S_f\cap U(\C_p))$ lie in distinct residue discs. We will use $x$ to denote the coordinate on $\A^1\subset\P^1$.  Without loss of generality, we may suppose that $U$ is the open disc $B(0,R)$ (for some $R>1$) minus some closed discs and that $S_f\cap U(\C_p)\subset \overline{B}(0,1)$. Let $I_{\infty}$ be the set of roots of $f(x)$ lying outside of $B(0,R)$. Because the roots of $f(x)$ are $\bK$-points, the elements of $I_\infty\setminus\{\infty\}$ have norm at least $p^{1/e}$. We partition the roots of $f(x)$ in $B(0,R)$ by residue disc: $S_f\cap B(0,R) = \cup_{j=1}^{m}I_j$. Notice that some of $I_j$'s may have only one element. We can relabel these sets such that
\begin{enumerate}
\item for $j=1,\dots,k$, $|I_j|\geq 2$ and $|I_j|$ is even;
\item for $j=k+1,\dots,\ell$, $|I_j|\geq 2$ and $|I_j|$ is odd; and
\item for $j=\ell+1,\dots,m$, $|I_j|=1$.
\end{enumerate}
For $j=1,\dots,\ell$, pick $\beta_j\in\A^1(\bK)\setminus U(\bK)$ in the same residue disc as the points in $I_j$ (we may even take $\beta_j$ to be an element of $I_j$); and for $j=\ell+1,\dots,m$, let $\beta_j$ denote the unique element of $I_j$. Notice that $\|\beta_j\|_p \leq 1$ for all $j$. Define 
\[ L_j =
  \begin{cases}
    \hfil |I_j|/2       &\text{for } j=1,\dots,k;\\
    (|I_j|-1)/2  &\text{for } j=k+1,\dots,\ell
  \end{cases} \] and set
\begin{align*}
g(x) &= \prod_{j=k+1}^{m} (x-\beta_j),\\
h(x) & = \prod_{j=1}^{\ell}(x-\beta_j)^{L_j},\\
k(x) &= \left(\prod_{j=1}^{\ell}\prod_{\beta\in I_j}\left(\frac{x-\beta}{x-\beta_j}\right)\right)\left(\prod_{\beta\in I_\infty\setminus\{\infty\}}(x-\beta)\right).
\end{align*} 

Observe that $f(x)=g(x)h(x)^2k(x)$. Since $g(x)$ has at most one root in each residue disc and for $j=1,\dots,k$, the element $\beta_j$ is not in the same residue disc as a root of $g(x)$, we have
\begin{align*}
\|g(\beta_j)\|_p &= 1 \text{ for } j=1,\dots,k,\\
\|g'(\beta_j)\|_p &= 1 \text { for } j=k+1,\dots,\ell.
\end{align*}

Set 
\[\tilde{y}=\frac{y}{h(x)k(x)^{1/2}}.\]
Note that $\frac{1}{h(x)k(x)^{1/2}}$ is an analytic function on $U$ by construction; so $\tilde{y}^2=g(x)$ for $x\in  U$ defines a union of at most two basic wide opens in $X^{\an}$. Write $\tilde{X}$ for the complete curve defined $\tilde{y}^2=g(x)$. We may write $\tilde{X}_{g(x)}$ for $\tilde{X}$ when the polynomial $g(x)$ needs to be specified. 

We have
\[\omega_i=x^i\frac{dx}{2y}=\frac{x^i}{h(x)k(x)^{1/2}}\frac{dx}{2\tilde{y}}.\]
We will expand $\omega_i$ in a power series on $\pi^{-1}(U)$. We may write
\begin{align*}
k_j(x) &= \prod_{\beta\in I_j}\Big(1-\frac{\beta-\beta_j}{x-\beta_j}\Big) \text{ for } j=1,\dots,\ell, \\ 
k_\infty(x) &= \prod_{\beta\in I_\infty\setminus\{\infty\}}(-\beta)(1-\beta^{-1}x),
\end{align*} 
so $k(x)=\big(\prod_j k_j(x)\big)k_\infty(x)$. Now,
\begin{align*}
\frac{1}{k_j(x)^{1/2}} &= \prod_{\beta\in I_j}\Big(1-\frac{\beta-\beta_j}{x-\beta_j}\Big)^{-1/2}=\sum_{n=0}^{\infty}\frac{B_{jn}}{(x-\beta_j)^n} \\ 
\frac{1}{k_\infty(x)^{1/2}} &= \prod_{\beta\in I_\infty\setminus\{\infty\}}
(-\beta)^{-1/2}(1-\beta^{-1}x)^{-1/2}=\sum_{n=0}^{\infty}B_{\infty n}x^n
\end{align*} 
for some $B_{jn}$'s and $B_{\infty n}$'s. Then,
\begin{align*}
\frac{x^i}{h(x)k(x)^{1/2}} &= \Bigg(\prod_{j=1}^\ell \sum_{n=0}^{\infty}\frac{B_{jn}}{(x-\beta_j)^{n+L_j}}\Bigg)\Bigg(\sum_{n=0}^{\infty}B_{\infty n}x^{n+i}\Bigg)\\ 
&= \sum\limits_{\substack{n_1\geq L_1,\dots,n_{\ell}\geq L_{\ell} \\ n_\infty\geq i}}\Bigg(B_{n_1,\dots,n_{\ell},n_\infty}x^{n_{\infty}}\prod_{j=1}^{\ell}\frac{1}{(x-\beta_j)^{n_j}}\Bigg)
\end{align*} 
for some $B_{n_1,\dots,n_{\ell},n_\infty}$'s. We may bound these coefficients as follow.

\begin{prop} There is a constant $C$ such  that \[\|B_{n_1,\dots,n_{\ell},n_{\infty}}\|_p\leq Cp^{-{\frac{(n_\infty-i)+\sum_{j=1}^{\ell}(n_j-L_j)}{e}}}.\]
\end{prop}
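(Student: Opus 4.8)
The plan is to read the multi-index coefficient $B_{n_1,\dots,n_\ell,n_\infty}$ off as a product of one-variable coefficients and then bound each factor by a geometric estimate. First I would multiply out the displayed product of series and collect the coefficient of the monomial $x^{n_\infty}\prod_{j}(x-\beta_j)^{-n_j}$; this monomial occurs uniquely because the $\beta_j$ and $\infty$ are distinct (partial fractions), and choosing the term $(x-\beta_j)^{-(n_j-L_j)-L_j}$ from each $h$--$k_j$ factor and $x^{(n_\infty-i)+i}$ from the $k_\infty$ factor yields
\[
B_{n_1,\dots,n_\ell,n_\infty}=B_{\infty,\,n_\infty-i}\prod_{j=1}^{\ell}B_{j,\,n_j-L_j}.
\]
Since $\|\cdot\|_p$ is multiplicative, it then suffices to prove the two one-variable bounds $\|B_{jn}\|_p\leq p^{-n/e}$ for $j=1,\dots,\ell$ and $\|B_{\infty n}\|_p\leq C\,p^{-n/e}$ for a constant $C$ independent of $n$; substituting $n=n_j-L_j$ and $n=n_\infty-i$ and multiplying recovers the stated exponent.

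The arithmetic heart is a $p$-adic bound on the binomial coefficients of $(1-u)^{-1/2}$. I would record that for odd $p$,
\[
\binom{-1/2}{m}=(-1)^m\,4^{-m}\binom{2m}{m},
\]
which is a $p$-adic integer because $p\nmid 4$ and $\binom{2m}{m}\in\Z$, so $\|\binom{-1/2}{m}\|_p\leq 1$; this is exactly where the hypothesis $p\neq 2$ enters. To bound $B_{jn}$ I would expand each factor of $k_j(x)^{-1/2}$ as
\[
\Big(1-\tfrac{\beta-\beta_j}{x-\beta_j}\Big)^{-1/2}=\sum_{m\geq 0}\binom{-1/2}{m}(-1)^m(\beta-\beta_j)^m\,\frac{1}{(x-\beta_j)^m},
\]
and use that $\beta,\beta_j$ lie in the same residue disc of $\bK$, a field of ramification degree $e$, so $v_p(\beta-\beta_j)\geq 1/e$ and the coefficient of $(x-\beta_j)^{-m}$ has norm at most $p^{-m/e}$. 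Then $B_{jn}$ is the convolution of the $|I_j|$ such series — a sum of products in which the exponents of $(x-\beta_j)^{-1}$ add up to $n$ — so each summand has norm at most $p^{-n/e}$, and the ultrametric inequality gives $\|B_{jn}\|_p\leq p^{-n/e}$.

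For $B_{\infty n}$ the argument is analogous: the coefficient of $x^m$ in $(1-\beta^{-1}x)^{-1/2}$ has norm at most $\|\beta\|_p^{-m}\leq p^{-m/e}$, using $\|\beta\|_p\geq p^{1/e}$ for $\beta\in I_\infty\setminus\{\infty\}$. The scalar prefactors $(-\beta)^{-1/2}$ do not depend on $n$ and contribute the constant $C=\prod_{\beta\in I_\infty\setminus\{\infty\}}\|\beta\|_p^{-1/2}$ (empty product $1$ if $I_\infty\setminus\{\infty\}=\varnothing$), and the same convolution argument gives $\|B_{\infty n}\|_p\leq C\,p^{-n/e}$. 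Combining the three bounds through the factorization completes the argument.

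The step I expect to be the main obstacle is not any single inequality but the bookkeeping: one must check that the multi-index coefficient genuinely factors as the product of one-variable coefficients, and that in each convolution the total pole (or zero) order lines up with the exponent appearing in the target bound, so that the geometric rates $p^{-m/e}$ combine additively in the exponent. The only genuinely arithmetic input is the integrality of $4^{-m}\binom{2m}{m}$ at odd $p$; everything else is the ultrametric inequality applied to convolutions of series with geometrically decaying coefficients.
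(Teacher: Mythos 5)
Your proof is correct and takes essentially the same route as the paper's: integrality of the coefficients of $(1-y)^{-1/2}$ for $p\neq 2$, geometric decay at rate $p^{-m/e}$ from $\|\beta-\beta_j\|_p\leq p^{-1/e}$ (resp.\ $\|\beta\|_p\geq p^{1/e}$ for the factors at infinity), the ultrametric inequality on the convolutions, and multiplication of the one-variable bounds. You merely make explicit what the paper leaves implicit in its final step, namely the factorization $B_{n_1,\dots,n_\ell,n_\infty}=B_{\infty,\,n_\infty-i}\prod_{j}B_{j,\,n_j-L_j}$ (which holds because the coefficients are defined by the product expansion) and the fact that the only contribution to the constant $C$ comes from the $(-\beta)^{-1/2}$ prefactors, so that one may take $C_j=1$ for $j=1,\dots,\ell$.
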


\begin{proof} First observe that because $p\neq 2$, the coefficients of 
\[(1-y)^{-1/2}=\sum_{n=0}^\infty \frac{1}{2^{2n}}\binom{2n}{n}y^n\]
are $p$-adic integers.
Since $\|\beta-\beta_j\|_p\leq p^{-1/e}$ for each $\beta\in I_j$, by the ultrametric triangle inequality, we have $\|B_{jn}\|_p\leq C_jp^{-n/e}$ for some constant $C_j$. By an identical argument, 
we have 
\[\|B_{\infty n}\|_p\leq C_\infty p^{-n/e}\] 
for some constant $C_\infty$. By multiplying together our inequalities, we get the desired conclusion.
\end{proof}

Consequently, the expression
\begin{equation} \label{e:omegaiexpansion}
\omega_i= \sum\limits_{\substack{n_1\geq L_1,\dots,n_{\ell}\geq L_{\ell} \\ n_\infty\geq i}}\Bigg(B_{n_1,\dots,n_{\ell},n_\infty}x^{n_{\infty}}\prod_{j=1}^{\ell}\frac{1}{(x-\beta_j)^{n_j}}\frac{dx}{2\tilde{y}}\Bigg)
\end{equation}
makes sense. 

\begin{prop} Let $\tilde{\omega}_i=x^i\frac{dx}{2\tilde{y}}$. Then,
\begin{align*}
\left\|\PP_{\tilde{\omega}_0,\alpha}\left(x^{n_{\infty}}\prod_{j=1}^{\ell}\frac{1}{(x-\beta_j)^{n_j}}\frac{dx}{2\tilde{y}}\right)\right\|_p&\leq 1,\\
\left\|\PP_{\tilde{\omega}_{-1},\infty}\left(x^{n_{\infty}}\prod_{j=1}^{\ell}\frac{1}{(x-\beta_j)^{n_j}}\frac{dx}{2\tilde{y}}\right)\right\|_p&\leq 1
\end{align*}
where $\alpha$ is a point over any $\beta_i.$ 
\end{prop}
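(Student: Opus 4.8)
The plan is to reduce both inequalities to the single elementary fact that a product of power series with $p$-adic integral coefficients again has integral coefficients, the arithmetic inputs being that $\|\beta_j\|_p\le 1$ for every $j$ and that distinct $\beta_j$ lie in distinct residue discs, so that $\|\beta_i-\beta_j\|_p=1$ whenever $i\ne j$.

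First I would record that, writing $\eta$ for the $1$-form in the statement, both quotients $\eta/\tilde\omega_0=x^{n_\infty}\prod_{j=1}^\ell(x-\beta_j)^{-n_j}$ and $\eta/\tilde\omega_{-1}=x^{n_\infty+1}\prod_{j=1}^\ell(x-\beta_j)^{-n_j}$ are rational functions of $x$ alone, since $\eta$ is odd. As in the discussion of principal parts above, for a point $\alpha$ over $\beta_i$ the evenness of $\eta/\tilde\omega_0$ forces $\PP_{\tilde\omega_0,\alpha}(\eta)$ to be a polynomial in $z=(x-\beta_i)^{-1}$, and the coefficient of $z^{n_i-s}$ is the $s$th Taylor coefficient, in the local variable $u=x-\beta_i$, of the germ
\[Q(u)=(\beta_i+u)^{n_\infty}\prod_{j\ne i}(\beta_i-\beta_j+u)^{-n_j},\]
which is holomorphic at $u=0$.

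To bound these coefficients I would expand each factor of $Q$ separately. The factor $(\beta_i+u)^{n_\infty}=\sum_s\binom{n_\infty}{s}\beta_i^{\,n_\infty-s}u^s$ has integral coefficients because $\|\beta_i\|_p\le 1$ and binomial coefficients are integers; for $j\ne i$, setting $c=\beta_i-\beta_j$ and using $\|c\|_p=1$, the factor $(c+u)^{-n_j}=c^{-n_j}\sum_s\binom{-n_j}{s}c^{-s}u^s$ likewise has integral coefficients. Multiplying, $Q(u)$ has integral Taylor coefficients, so every coefficient of $\PP_{\tilde\omega_0,\alpha}(\eta)$ has norm at most $1$, which is the first bound. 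For the second inequality I would substitute $w=1/x$ to obtain $\eta/\tilde\omega_{-1}=w^{N}\prod_{j=1}^\ell(1-\beta_j w)^{-n_j}$ with $N=\sum_j n_j-n_\infty-1$; each factor $(1-\beta_j w)^{-n_j}=\sum_s\binom{-n_j}{s}(-\beta_j)^s w^s$ has integral coefficients since $\|\beta_j\|_p\le 1$, hence so does the product, and $\PP_{\tilde\omega_{-1},\infty}(\eta)$ collects exactly the negative powers of $w$, whose coefficients are among these integral ones.

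The one step needing care is the first bound when $\beta_i$ is a root of $g(x)$, i.e.\ at a Weierstrass point, where the genuine uniformizer is $w$ with $w^2=x-\beta_i$ rather than $x-\beta_i$ itself; there I rely on the earlier observation that the invariance of $\eta/\tilde\omega_0$ under the hyperelliptic involution makes its principal part a polynomial in $z=(x-\beta_i)^{-1}$, so that the Taylor-coefficient computation is identical to the non-Weierstrass case. The genuine arithmetic content is the equality $\|\beta_i-\beta_j\|_p=1$ for $i\ne j$: without the distinct-residue-disc property the factors $c^{-n_j-s}$ would have unbounded norm and the bound would fail.
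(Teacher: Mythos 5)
Your proof is correct and takes essentially the same route as the paper's: both reduce the two bounds to factor-by-factor Laurent expansion of $\eta/\tilde{\omega}_0$ at $x=\beta_i$ (resp.\ in $1/x$ at $\infty$), with the arithmetic input $\|\beta_j\|_p\le 1$ and $\|\beta_i-\beta_j\|_p=1$ for $i\neq j$, and both dispose of the Weierstrass case via the earlier observation that oddness forces the principal part to be a polynomial in $(x-\beta_i)^{-1}$. The only cosmetic difference is that the paper invokes multiplicativity of the Gauss norm on the factors where you instead verify integrality of the binomial-series coefficients directly.
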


\begin{proof} 
If $\alpha$ is a point over some $\beta_i$, set $t=x-\beta_i$. Then, we have the following bounds on the coefficients of these power series, considered as Laurent series in $t$:
\begin{eqnarray*}
\left\|\frac{1}{(x-\beta_i)^{n_i}}\right\|_p&=& 1,\\
\|x^{n_\infty}\|_p&=& 1,\\
\left\|\frac{1}{(x-\beta_j)^{n_j}}\right\|_p&\leq& 1,\ \ j\neq i.\\
\end{eqnarray*}
Here, the last inequality follows from the observation that $\|\beta_i-\beta_j\|_p\geq 1$ for $i \neq j$. Because the Gauss norm $\|\cdot\|_p$ is multiplicative, for Laurent series $f$ and $g$,
\[\|\PP(fg)\|_p\leq \|fg\|_p =  \|f\|_p\|g\|_p\]
from which the conclusion follows.
An analogous arguments holds for $\infty$ using $t=1/x$.
\end{proof}

\begin{prop} Suppose $e<p-1$ and set $r = \frac{1}{e} - \frac{1}{p-1}$. Pick $R$ with $1<R<p^r$ and let 
\[D=\overline{B}(0,R)\setminus\bigcup_{i=1}^{\ell} B(\beta_i,1/R).\]
The $1$-form 
\[\omega_i=\frac{x^i}{h(x)k(x)^{1/2}}\frac{dx}{2\tilde{y}}\] 
has a Coleman primitive on $\pi^{-1}(D)$ given as the sum of terms of the following form:
\begin{enumerate}
    \item \label{i:firstterm} $a_{ij}\frac{y}{(x-\beta_i)^j}$ for $i=1,\dots,\ell$ and $j=1,2,\dots$,
    \item $b_j x^jy$ for $j=0,1,\dots$, 
    \item $c_i f_{\omega_i}$ for $i=0,\dots,d-2$, and
    \item \label{i:lastterm} $d_i f_{\nu_i}$ for $i=1,\dots,k$,
\end{enumerate}
where $f_{\omega_i}$ and $f_{\nu_i}$ are as in Proposition~\ref{p:primitive}. 
\end{prop}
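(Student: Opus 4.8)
The plan is to integrate the power series expansion \eqref{e:omegaiexpansion} of $\omega_i$ term by term using Proposition~\ref{p:primitive}, and then to interchange summation and integration by means of Proposition~\ref{swapintsum}. Each summand
\[\eta_{n_1,\dots,n_\ell,n_\infty}=B_{n_1,\dots,n_\ell,n_\infty}\,x^{n_\infty}\prod_{j=1}^{\ell}\frac{1}{(x-\beta_j)^{n_j}}\frac{dx}{2\tilde{y}}\]
is an odd meromorphic $1$-form on $\tilde{X}$ with poles at $\beta_1,\dots,\beta_k$ (non-roots of $g$, where $\|g(\beta_j)\|_p=1$), at $\beta_{k+1},\dots,\beta_\ell$ (roots of $g$, where $\|g'(\beta_j)\|_p=1$) and at $\infty$, of orders $n_1,\dots,n_\ell,n_\infty$ respectively. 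It therefore satisfies exactly the hypotheses of Proposition~\ref{p:primitive}, which provides a Coleman primitive of it as a $\bK$-linear combination of the four listed families of terms, with the stated coefficient bounds. Summing over all indices produces a formal candidate of the desired shape; every finite partial sum is a finite $\bK$-linear combination of Coleman primitives, hence Coleman analytic, and is a primitive of the corresponding partial sum of \eqref{e:omegaiexpansion}.

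The heart of the argument is convergence, and this is where the hypotheses $e<p-1$ and $R<p^r$ enter. On $D$ we have $\|x-\beta_i\|_p\geq 1/R$ and $\|x\|_p\leq R$, so $\|(x-\beta_i)^{-j}\|_p\leq R^j$ and $\|x^j\|_p\leq R^j$, while $y$ is bounded on $\pi^{-1}(D)$; hence each series $\sum_j a_{ij}\tfrac{y}{(x-\beta_i)^j}$ and $\sum_j b_j x^j y$ converges uniformly on $\pi^{-1}(D)$ as soon as $\|a_{ij}\|_p R^j\to0$ and $\|b_j\|_p R^j\to0$. Fixing $i\leq k$ and a power $j$, the contribution to $a_{ij}$ from the summand indexed by $(n_1,\dots,n_\ell,n_\infty)$ is nonzero only when $n_i\geq j+1$, and by the coefficient bound $\|B_{n_1,\dots,n_\ell,n_\infty}\|_p\leq Cp^{-((n_\infty-i)+\sum_m(n_m-L_m))/e}$ together with the type-$(1)$ bound $p^{n_i/(p-1)}\|\PP_{\tilde{\omega}_0,\alpha_i}(\cdot)\|_p$ of Proposition~\ref{p:primitive} and the preceding proposition giving $\|\PP_{\tilde{\omega}_0,\alpha_i}(\cdot)\|_p\leq1$, this contribution is at most
\[Cp^{-(n_i-L_i)/e}\,p^{n_i/(p-1)}=Cp^{L_i/e}\,p^{-n_i r}\leq Cp^{L_i/e}\,p^{-(j+1)r},\]
where I drop the remaining nonnegative exponents and use $r=\tfrac1e-\tfrac1{p-1}>0$. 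By the ultrametric inequality $\|a_{ij}\|_p$ obeys the same bound, so $\|a_{ij}\|_p R^j\leq Cp^{L_i/e}p^{-r}(R/p^r)^j\to0$ because $R<p^r$. The identical computation, with the harmless extra polynomial-in-$n$ and constant factors appearing in the type-$(2)$ and type-$(3)$ bounds, gives geometric decay for the coefficients of $\tfrac{y}{(x-\beta_i)^j}$ with $i>k$ and of $x^jy$ in terms of $n_\infty$. For the finitely many terms $f_{\omega_i}$ and $f_{\nu_i}$, the coefficients are themselves convergent scalar series (for instance $d_i=\sum \Res_{\alpha_i}(\eta_{n_1,\dots,n_\infty})/\Res_{\alpha_i}(\nu_i)$, each summand bounded by $\|B_{n_1,\dots,n_\infty}\|_p/\|\Res_{\alpha_i}(\nu_i)\|_p\to0$), so those finite sums converge as well.

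Finally, to promote this convergent sum to an honest Coleman primitive of $\omega_i$ on $\pi^{-1}(D)$, I would invoke Proposition~\ref{swapintsum}. The three hypotheses there are precisely the uniform convergence statements just established: convergence of the partial sums of $\omega_i$ in $\Omega^1$ is \eqref{e:omegaiexpansion} with the $B$-bound, and convergence of the partial primitives restricted to each boundary annulus $E$ in $A_{\Log}(E)$ and of $P(\phi^*)$ applied to them on the Frobenius neighborhood $W$ follow from the same geometric estimates, since the deleted discs of $\pi^{-1}(D)$ surround the $\beta_i$ and $\infty$, exactly where the terms $\tfrac{y}{(x-\beta_i)^j}$ and $x^jy$ are being estimated. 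The main obstacle is therefore not any single estimate but the bookkeeping that pits the growth $p^{n/(p-1)}$ coming from pole reduction against the decay $p^{-n/e}$ of the expansion coefficients, uniformly across all four families of terms; the inequality $e<p-1$ and the choice $1<R<p^r$ are precisely what make the competing exponents combine into the contraction factor $(R/p^r)<1$.
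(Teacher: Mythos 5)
Your proposal is correct and follows essentially the same route as the paper's proof: integrate the expansion \eqref{e:omegaiexpansion} term by term via Proposition~\ref{p:primitive}, balance the $p^{N/(p-1)}$ growth from pole reduction against the $p^{-N/e}$ decay of the expansion coefficients using $e<p-1$ and $R<p^r$, and conclude by verifying the hypotheses of Proposition~\ref{swapintsum}. Your explicit estimate $\|a_{ij}\|_p R^j\leq Cp^{L_i/e}p^{-r}(R/p^r)^j$ merely spells out the contraction factor that the paper's (terser) proof leaves implicit.
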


\begin{proof}
The coefficients for the power series expansion of $\omega$ in \eqref{e:omegaiexpansion} decay at the rate of $p^{-N/e}$ where $N=n_\infty+\sum_{j=1}^{\ell} n_j$. By examining the summands,
we see that they converge uniformly on $D$.
On the other hand, by Proposition~\ref{p:primitive}, the coefficients of the primitives of the summands (expressed in terms of the form \eqref{i:firstterm}-\eqref{i:lastterm}) grow slower than $Np^{N/(p-1)}$. We call the series produced by integrating the power series expansion, \defi{the series of primitives}. For any given term of the type \eqref{i:firstterm}-\eqref{i:lastterm}, its coefficient is convergent in the series of primitives.

We have to verify the hypotheses of Proposition~\ref{swapintsum} to  show that the sum of the series of primitives is equal to the primitive of the sum of the series. By construction, the series of primitives is locally uniformly convergent. Moreover, for any lift of Frobenius $\phi$ and annihilating polynomial $P$, $P(\phi^*)$ applied to the series of primitives converges uniformly on a Frobenius neighborhood within $D$.  Moreover, the restriction of the  series of primitives to boundary annuli is uniformly convergent.
\end{proof}

\begin{rem}
While we employ the algorithms from \cite{BBComputing} to integrate the $1$-forms $\nu_j$ on hyperelliptic basic wide opens, we can formulate a different integration algorithm similar to the work of Tuitman \cite{Tuitman:counting1,Tuitman:counting2} using our techniques. Specifically, we can pick a lift of Frobenius $\phi$ on a hyperelliptic basic wide open. By replacing the lift of Frobenius by some power, we can ensure that it preserves the residue discs containing $\beta_1,\dots,\beta_{\ell}$. Consequently, $\phi^*\nu_j$ can be written as a power series as in this section. By using the techniques of the previous section, $\phi^*\nu_j$ can be rewritten as a linear combination of $1$-forms $\{\omega_0,\dots,\omega_{d-2},\nu_1,\dots,\nu_k\}$ and an exact form $dh_j$. Consequently, one obtains a matrix representing the action of Frobenius on odd cohomology and uses it to write down $p$-adic integrals. We will explore this in future work. 
\end{rem}

\section{Integration on Curves}

\subsection{Berkovich--Coleman Integration on Paths}

We explain how to perform Berkovich--Coleman integrals on a hyperelliptic curve $X^{\an}$. Such an integral is to be done along a path $\gamma$ in $X^{\an}$. We will break up the path into smaller paths lying in hyperelliptic basic wide opens. Fix a holomorphic $1$-form $\omega$ on $X^{\an}$.

Let $\cC$ be a semistable covering of $X^{\an}$ with dual graph $\Gamma$. For a vertex $v$ of $\Gamma$, let $U_v$ be the corresponding element of the covering. For $e=vw$, let $U_e$ be the corresponding component of the intersection $U_v\cap U_w$. Pick a point $P_v$ in each $U_v$ and a point $P_e$ in each $U_e$. These are called \textbf{reference points}. For each oriented edge $e$, write $i(e)$ and $t(e)$ for the initial and terminal point of $e$, respectively.

To a path $\gamma=e_1e_2\dots e_{\ell}$ in $\Gamma$ from $v$ to $w$, we can consider the Berkovich--Coleman integral of $\omega$ from the reference point $P_v$ to the reference point $P_w$ along the path $\gamma$. In fact, because $\Gamma$ is identified with the skeleton of $X^{\an}$, there is a unique path $\tilde{\gamma}_{vw}$ in $X^{\an}$ from $P_v$ to $P_w$ (up to fixed endpoint homotopy) whose image under $\tau\colon X^{\an} \to\Gamma$ is $\gamma$. We have
\[\BCint_{\tilde{\gamma}_{vw}} \omega = 
\sum_{i=1}^{\ell} \left(
\BCint_{P_{i(e_i)}}^{P_{e_i}} \omega
+\BCint_{P_{e_i}}^{P_{t(e_i)}} \omega
\right).\]
Here the integral from $P_{i(e_i)}$ to $P_{e_i}$ is to be performed on $U_{i(e_i)}$ and the integral from $P_{e_i}$ to $P_{t(e_i)}$ is to be performed on $U_{t(e_i)}$. Indeed, we can see the path $\tilde{\gamma}_{vw}$ as the concatenation (over $i$) of the path from $P_{i(e_i)}$ to $P_{e_i}$ in $U_{i(e_i)}$ followed by the path from $P_{e_i}$ to $P_{t(e_i)}$ in $U_{t(e_i)}$. 

Now, given $x\in U_v$, $y\in U_w$ and a path $\gamma$ from $v$ to $w$ in $\Gamma$, we may consider the Berkovich--Coleman integral of $\omega$ from $x$ to $y$ along $\gamma$. Indeed, it is the integral along any path $\tilde{\gamma}$ from $x$ to $y$ tropicalizing to $\gamma$:
\[\BCint_{\tilde{\gamma}} \omega=\BCint_x^{P_v} \omega+
\BCint_{\tilde{\gamma}_{vw}} \omega + 
\BCint_{P_w}^y \omega\]
where the first and last integrals on the right side are performed on $U_v$ and $U_w$, respectively. This integral is independent of the choices of reference points.

Finally, for a closed path $\gamma$ in $\Gamma$ at a vertex $v$, we may consider the \defi{Berkovich--Coleman period} 
\[\BCint_{\tilde{\gamma}} \omega = \BCint_{\tilde{\gamma}_{vv}} \omega.\]
Again, this is independent of the choice of reference points. Indeed, it depends only on the homology class of $\gamma$. 

This gives the following algorithm for performing Berkovich--Coleman integration of $\omega$. In particular, we can compute the periods of $\omega$ around closed loops.

\smallskip

\begin{algorithm}[H] \label{ComputingBCIntegrals}
\caption{\bf Computing Berkovich--Coleman integrals}

\KwIn{
\begin{itemize}
    \item A holomorphic $1$-form $\omega$ on $X^{\an}$.
    \item Points $x\in U_v, y\in U_w$.
    \item A path $\gamma=e_1e_2\dots e_{\ell}$ from $v$ to $w$ in $\Gamma$.
\end{itemize}
}
\KwOut{Berkovich--Coleman integral of $\omega$ from $x$ to $y$ along $\tilde{\gamma}$.}

\begin{enumerate}
    \item \label{BCintegralfororientededge} For each $i$, compute the integrals
    \[\BCint_{P_{i(e_i)}}^{P_{e_i}} \omega,\  \BCint_{P_{e_i}}^{P_{t(e_i)}} \omega\] on the basic wide opens $U_{i(e_i)}$ and $U_{t(e_i)}$, respectively.
    \item Compute the sum 
    \[\BCint_{\tilde{\gamma}_{vw}} \omega =  \sum_{i=1}^{\ell} \left(\BCint_{P_{i(e_i)}}^{P_{e_i}} \omega+\BCint_{P_{e_i}}^{P_{t(e_i)}} \omega\right).\]
    \item Compute the integrals
    \[\BCint_x^{P_v}\omega,\  \BCint_{P_w}^y \omega\]
    on the basic wide opens $U_v$ and $U_w$, respectively.
    \item Return 
    \[\BCint_{\tilde{\gamma}} \omega = \BCint_x^{P_v}\omega + \BCint_{\tilde{\gamma}_{vw}} \omega + \BCint_{P_w}^y \omega.\]
\end{enumerate} 
\end{algorithm}

\subsection{Abelian Integration}

We have an algorithm for computing abelian integrals on a hyperelliptic curve $X$ using Theorem~\ref{thm:abintformula} given a semistable cover $\cC$ and its dual graph $\Gamma$. 

\smallskip

\begin{algorithm}[H] 

\caption{\bf Computing abelian integrals}

\KwIn{
\begin{itemize}
    \item A holomorphic $1$-form $\omega$ on $X$.
    \item Points $x,y\in X(\bK)$.
\end{itemize}
}
\KwOut{Abelian integral of $\omega$ from $x$ to $y$.}

\begin{enumerate}
    \item Pick a path $\gamma$ in $\Gamma$ from $v$ to $w$ for vertices $v,w$ such that $x\in U_v$ and $y\in U_w$.
    \item Compute the Berkovich--Coleman integral
    \[\BCint_{\tilde{\gamma}} \omega\]
    as in Algorithm~\ref{ComputingBCIntegrals}.
    \item Pick a basis $C_1,\dots,C_h$ for $H_1(\Gamma;\Z)$ and a basis $\eta_1,\dots,\eta_h$ of $\Omega^1_{\trop}(\Gamma)$ dual to $C_1,\dots,C_h$ (see Remark~\ref{hombasistropbasis}).
    \item For each $i$, pick a loop $\gamma_i$ in $X^{\an}$ whose homology class is $C_i$.
    \item Compute the Berkovich--Coleman periods
    \[\BCint_{\gamma_i} \omega,\ i=1,\dots,h\] 
    as in Algorithm~\ref{ComputingBCIntegrals}.
    \item Compute the tropical integrals 
    \[\tint_{\gamma} \eta_i,\ i=1,\dots,h.\]
    \item Return
    \[\Abint_x^y \omega = \BCint_{\tilde{\gamma}}\omega-\sum_i \left(\BCint_{\gamma_i} \omega\right)\left(\tint_{\gamma} \eta_i\right).\]
\end{enumerate} 
\end{algorithm}

\begin{rem} \label{hombasistropbasis}
A basis of $H_1(\Gamma;\Z)$ and a dual tropical basis can be obtained from the tree $T$ as in Proposition~\ref{p:iota}. Let $C'_1,\dots,C'_h$ be a basis of $H_1(T_o,V_e;\Z)$ and let $D'_1,\dots,D'_h$ be a dual basis with respect to $\langle\cdot,\cdot\rangle$. Let $C_i=\iota(C'_i)$ and $D_i=\frac{1}{2}\iota(D'_i)$ where $\iota$ is given in Proposition~\ref{p:iota}.  Then, by Proposition~\ref{p:iotacycle}, $\{C_i\}$ and $\{\eta_i=\eta_{D_i}\}$ form a basis of $H_1(\Gamma;\Z)$ and a dual tropical basis of $1$-forms on $\Gamma$, respectively. 
\end{rem} 

\section{Numerical Examples}

Here, we illustrate our methods with numerical examples computed in Sage \cite{sagemath}. But first, we make the following remarks:

\begin{itemize}
    \item \emph{Sage restriction.} Let $X$ be a curve defined over $\Q_p$. An abelian integral on $X$ between $\Q_p$-rational points is an element of $\Q_p$. In our approach, such an integral is expressed as a sum of other integrals, each of which is an element of a possibly different finite extension of $\Q_p$. More precisely, reference points corresponding to edges might lie in highly ramified extensions and taking square roots  might force us to work with unramified extensions. In Sage, one can define these extensions individually, however, conversion between $p$-adic extensions has not been implemented yet. In order to deal with this restriction, in each of our examples, all computations will take place in a single extension.
    \item \emph{Weierstrass endpoints.} Let $X$ be an odd degree hyperelliptic curve with the Abel--Jacobi map $\iota\colon X\to J$ with base-point $\infty$. For Weierstrass points $R,S\in X(\C_p)$, the class $[S]-[R]$ represents a $2$-torsion point of $J(\C_p)$ since 
    \[\operatorname{div}(x-\alpha) = 2(\alpha,0)-2\infty\] 
    for any root $\alpha$ of the polynomial defining $X$. This implies by Remark~\ref{torsionvanish} that the abelian integrals with Weierstrass endpoints must vanish. We will observe this vanishing numerically to test the correctness of our algorithm. 
    \item \emph{Branch of logarithm.} As we discussed before, the Berkovich--Coleman integration requires a branch of the $p$-adic logarithm. We pick the Iwasawa branch, i.e., the one characterized by $\Log(p)=0$. Abelian integration does not depend on this choice.
\end{itemize}

In the examples below, as usual, $\omega_i$ will denote the holomorphic $1$-form $x^i\frac{dx}{2y}$ on the corresponding curve.

\begin{eg}(Genus~1)
Consider the elliptic curve $X/\Q$ \cite[\href{http://www.lmfdb.org/EllipticCurve/Q/272.b2}{272.b2}]{lmfdb} given by 
\[y^2=f(x)=(x-6)(x-5)(x+11).\] 
Its Mordell--Weil group is isomorphic to $\Z\times \Z/2\Z\times \Z/2\Z$ and the point $P=(-3,24)$ is a generator of the free part. 

Hereafter, we consider $X$ over 
the field $\Q_{17}$; clearly this curve has split multiplicative reduction. Set $R=(23,102), S=(7,6)$. Using the formal logarithm implementation in Sage \cite{sagemath}, one can easily check
\begin{equation}
\label{ECexample}
\Abint_S^R \omega_0 = 12 \cdot 17 + 8 \cdot 17^{2} + 15 \cdot 17^{3} + 9 \cdot 17^{4} + 16 \cdot 17^{5} + 8 \cdot 17^{6} + O(17^{7}).
\end{equation} 
We will compute this integral using our techniques and compare the results.

The set $\{U_1,U_2\}$ is a good semistable covering of $\P^{1,\an}$ with respect to $S_f=\{6,5,-11,\infty\}$ where
\[U_1 = \P^{1,\an}\setminus\overline{B}(6,1/17),\
U_2 = B(6,1)\] 
and we have the dual graphs $\Gamma$ and $T$
\[\begin{tikzpicture}
\draw [thick] (-4,0) ellipse (1cm and 0.4cm);
\filldraw (-5,0) circle (2pt);
\filldraw (-5.3,0) node{$v_1$};
\filldraw (-4,0.4) node{>};
\filldraw (-4,0.7) node{$e_1$};
\filldraw (-3,0) circle (2pt);
\filldraw (-2.7,0) node{$v_2$};
\filldraw (-4,-0.4) node{<};
\filldraw (-4,-0.7) node{$e_2$};

\qquad
 
\draw [thick] (0,0) -- (1.5,0);
\filldraw (0,0) circle (2pt);
\filldraw (0,0.4) node{$U_1$};
\filldraw (1.5,0) circle (2pt);
\filldraw (1.5,0.4) node{$U_2$};
\draw (0,0) -- (-0.3,-0.5);
\filldraw (-0.3,-0.7) node{\tiny{$5$}};
\draw (0,0) -- (0.3,-0.5);
\filldraw (0.3,-0.7) node{\tiny{$\infty$}};
\draw (1.5,0) -- (1.2,-0.5);
\filldraw (1.2,-0.7) node{\tiny{$6$}};
\draw (1.5,0) -- (1.8,-0.5);
\filldraw (1.8,-0.7) node{\tiny{$-11$}};
\end{tikzpicture}\] 
respectively. Note that $R\in\pi^{-1}(U_2),\ S\in \pi^{-1}(U_1).$

The cycle $C = e_1+e_2$ and the tropical $1$-form $\eta=\frac{1}{2}\eta_C$ are as in Corollary~\ref{l:dualbasis}. Now, we pick reference points. Let $P_{v_1}$ and $P_{v_2}$ be  points whose $x$-coordinates are $1$ and $-28$, respectively; hence $P_{v_i}\in \pi^{-1}(U_i)$. Let $P_{e_1}$ and $P_{e_2}$ denote the two different points whose $x$-coordinates are both $a+6$ where $a^2=17$. Notice that these points lie in the intersection $ \pi^{-1}(U_1)\cap \pi^{-1}(U_2)$. We assume that the point $P_{e_1}$ lies in the component corresponding to the edge $e_1$.

We have
\begin{align*}
\pi^{-1}(U_1) &\simeq \{(x,\tilde{y})\mid\tilde{y}^2=x-5, x\in U_1\} \\ 
&= \tilde{X}_{x-5}^{\an}\setminus D_1,\ D_1=\{(x,\tilde{y})\mid\tilde{y}^2=x-5, x\in \overline{B}(6,1/17)\}
\end{align*} where 
\[\tilde{y}=\frac{y}{\ell(x)},\ \ell(x) = (x-6)\Big(1+\frac{17}{x-6}\Big)^{1/2}.\] 
Define
\begin{eqnarray*}
\P^1&\to& \tilde{X}_{x-5}\\
T&\mapsto&\begin{cases}
            \hfil \infty    & \text{if } T=\infty,  \\
            (T^2+2T+6,T+1)    &    \text{otherwise.}
           \end{cases}
\end{eqnarray*} 
This is a parametrization and induces an isomorphism 
\[\P^{1,\an}\setminus\big(\overline{B}(0,1/17)\cup\overline{B}(-2,1/17)\big)\simeq \tilde{X}_{x-5}^{\an}\setminus D_1.\]
This annulus is isomorphic to a standard annulus by \begin{eqnarray*}
A(1/17^2,1)&\xrightarrow{\sim}& \P^{1,\an}\setminus\big(\overline{B}(0,1/17)\cup\overline{B}(-2,1/17)\big)\\
t&\mapsto&\frac{-2t}{t-17}
\end{eqnarray*} and, under these isomorphisms, the $1$-form ${\omega_0}_{\vert\pi^{-1}(U_1)}$ is represented on $A(1/17^2,1)$ by
\[\Big(1+\frac{(t-17)^2}{4t}\Big)^{-1/2}\frac{dt}{2t}.\]

Similarly, we have
\begin{align*}
\pi^{-1}(U_2) &\simeq \{(\tilde{x},\tilde{y})\mid\tilde{y}^2=\tilde{x}(\tilde{x}+1), \tilde{x}\in B(0,17)\} \\ 
&= \tilde{X}_{\tilde{x}(\tilde{x}+1)}^{\an}\setminus D_2,\ D_2=\{(\tilde{x},\tilde{y})\mid\tilde{y}^2=\tilde{x}(\tilde{x}+1), \tilde{x}\in \overline{B}(\infty,1/17)\}
\end{align*} where 
\[\tilde{x}=\frac{x-6}{17},\ \tilde{y}=\frac{y}{17\ell(\tilde{x})},\ \ell(\tilde{x})=(1+17\tilde{x})^{1/2}.\] 
Define
\begin{eqnarray*}
\P^1&\to& \tilde{X}_{\tilde{x}(\tilde{x}+1)}\\
T&\mapsto&\begin{cases}
            \hfil \infty^+    & \text{if } T=0,  \\
            \hfil \infty^-    & \text{if } T=\infty,  \\
            \Big(\frac{1}{2}\big(T+\frac{1}{4T}\big)-\frac{1}{2},\frac{1}{2}\big(T-\frac{1}{4T}\big)\Big)    &    \text{otherwise.}
           \end{cases}
\end{eqnarray*} 
This is a parametrization and induces an isomorphism 
\[\P^{1,\an}\setminus\big(\overline{B}(0,1/17)\cup\overline{B}(\infty,1/17)\big)\simeq \tilde{X}_{\tilde{x}(\tilde{x}+1)}^{\an}\setminus D_2.\] 
The annulus on the left is isomorphic to a standard annulus by 
\begin{eqnarray*}
A(1/17^2,1)&\xrightarrow{\sim}& \P^{1,\an}\setminus\big(\overline{B}(0,1/17)\cup\overline{B}(\infty,1/17)\big)\\
t&\mapsto& t/17
\end{eqnarray*}
and, under these isomorphisms, the $1$-form ${\omega_0}_{\vert\pi^{-1}(U_2)}$ is represented on $A(1/17^2,1)$ by
\[\Big(1+\frac{(t-17/2)^2}{2t}\Big)^{-1/2}\frac{dt}{2t}.\]

Let $\gamma$ be the concatenation of a path from $S$ to $P_{e_1}$ in $\pi^{-1}(U_1)$ and a path from $P_{e_1}$ to $R$ in $\pi^{-1}(U_2)$. Then $\tau(\gamma)=e_1$ and we have
\[\BCint_\gamma \omega_0 = 15 \cdot a^{4} + 11 \cdot a^{6} + 12 \cdot a^{8} + a^{10} + 11 \cdot a^{12} + O(a^{14}),\]
\[\tint_{e_1} \eta = \frac{1}{2}.\]

Consider the loop $\gamma_C=\gamma_1\gamma_2\gamma_3\gamma_4$ in $X^{\an}$ where $\gamma_1$ is a path from $P_{v_1}$ to $P_{e_1}$ in $\pi^{-1}(U_1)$, $\gamma_2$ is a path from $P_{e_1}$ to $P_{v_2}$ in $\pi^{-1}(U_2)$, $\gamma_3$ is a path from $P_{v_2}$ to $P_{e_2}$ in $\pi^{-1}(U_2)$ and $\gamma_4$ is a path from $P_{e_2}$ to $P_{v_1}$ in $\pi^{-1}(U_1)$. The paths are shown in a figure modified from one in \cite{krzb:utahsurvey}. 

\[\includegraphics[scale=0.12]{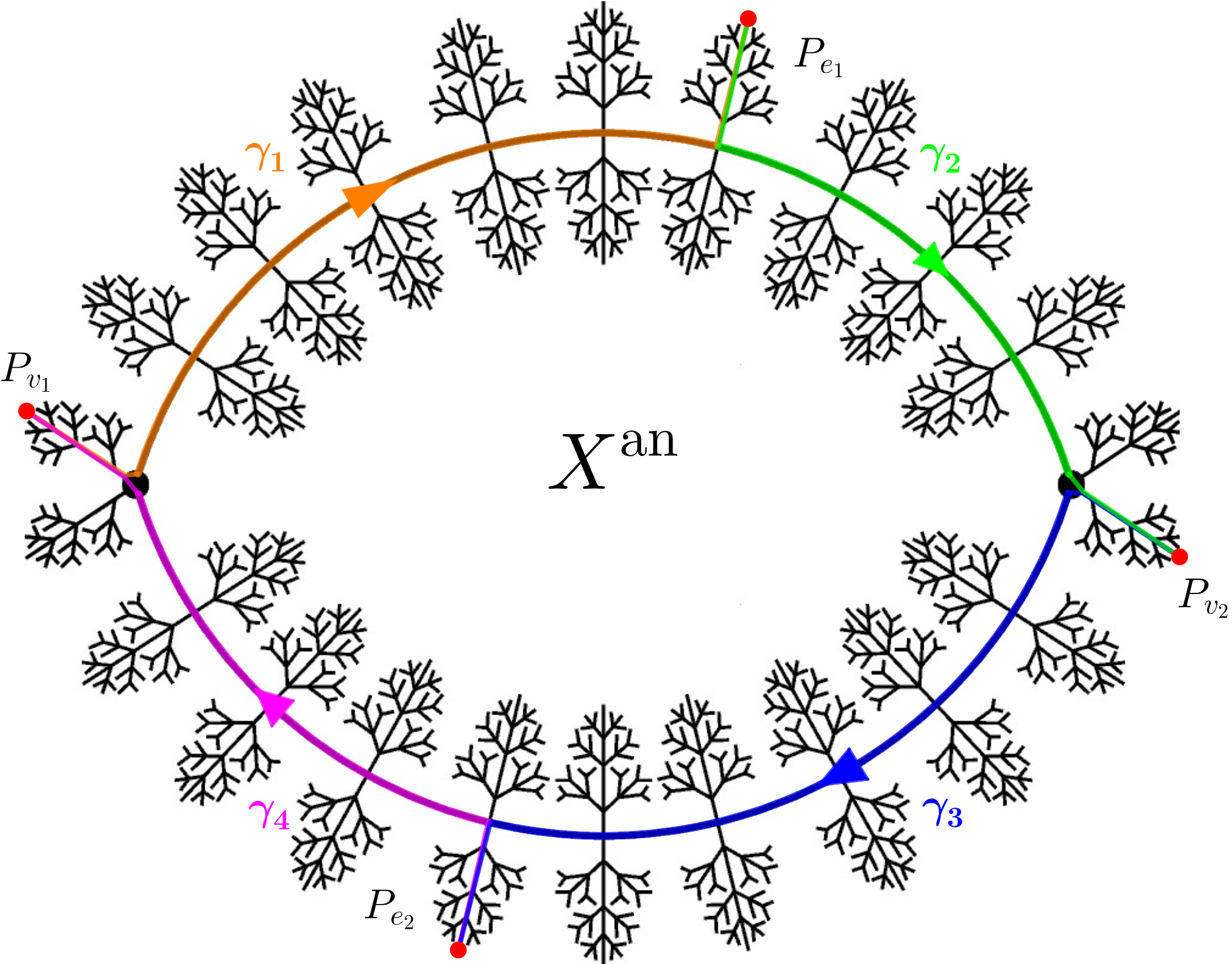}\]

The homology class of $\gamma_C$ is $C$ and the period integral is
\[\BCint_{\gamma_C} \omega_0 = 10 \cdot a^{2} + 12 \cdot a^{4} + 9 \cdot a^{6} + 5 \cdot a^{8} + 4 \cdot a^{10} + 4 \cdot a^{12} + O(a^{14}).\] 
Finally,
\begin{eqnarray*}
\Abint_S^R \omega_0 &=& \BCint_\gamma \omega_0 - \left(\BCint_{\gamma_C}\omega_0\right) \left(\tint_{e_1} \eta\right)\\
                    &=& 12 \cdot a^{2} + 8 \cdot a^{4} + 15 \cdot a^{6} + 9 \cdot a^{8} + 16 \cdot a^{10} + 8 \cdot a^{12} + O(a^{14})
\end{eqnarray*} 
which is the same result as in \eqref{ECexample} since $a^2 = 17$.

We also note that, using the addition law on elliptic curves, for each $i\in\{0,\dots,100\}$ our methods give
\[\Abint_{(5,0)+iP}^{(6,0)+iP} \omega_0 = \Abint_{(5,0)+iP}^{(-11,0)+iP} \omega_0 = O(a^{14})\]
demonstrating the vanishing of integrals between points whose difference is torsion.
\end{eg}

\begin{eg}(Genus~2)
Let $X/\Q_7$ be the genus $2$ curve defined by 
\[y^2=f(x)=x(x-1)(x-2)(x-3)(x-7).\] 
Set $R=(0,0), S=(1,0)$; we already know that the abelian integral of $\omega$ from $S$ to $R$ vanishes for every holomorphic form $\omega$ on $X$. Using our techniques, we will verify this up to a certain precision.

The set $\{U_1,U_2\}$ is a good semistable covering of $\P^{1,\an}$ with respect to $S_f=\{0,1,2,3,7,\infty\}$ where
\[U_1 = \P^{1,\an}\setminus\overline{B}(0,1/7),\
U_2 = B(0,1)\] 
and we have the dual graphs $\Gamma$ and $T$
\[\begin{tikzpicture}
\draw [thick] (-4,0) ellipse (1cm and 0.4cm);
\filldraw (-5,0) circle (2pt);
\filldraw (-5.3,0) node{$v_1$};
\filldraw (-4,0.4) node{>};
\filldraw (-4,0.7) node{$e_1$};
\filldraw (-3,0) circle (2pt);
\filldraw (-2.7,0) node{$v_2$};
\filldraw (-4,-0.4) node{<};
\filldraw (-4,-0.7) node{$e_2$};

\qquad
 
\draw [thick] (0,0) -- (1.5,0);
\filldraw (0,0) circle (2pt);
\filldraw (0,0.4) node{$U_1$};
\filldraw (1.5,0) circle (2pt);
\filldraw (1.5,0.4) node{$U_2$};
\draw (0,0) -- (-0.4,-0.5);
\filldraw (-0.4,-0.7) node{\tiny{$1$}};
\draw (0,0) -- (-0.15,-0.5);
\filldraw (-0.15,-0.7) node{\tiny{$2$}};
\draw (0,0) -- (0.15,-0.5);
\filldraw (0.15,-0.7) node{\tiny{$3$}};
\draw (0,0) -- (0.4,-0.5);
\filldraw (0.45,-0.7) node{\tiny{$\infty$}};
\draw (1.5,0) -- (1.2,-0.5);
\filldraw (1.2,-0.7) node{\tiny{$0$}};
\draw (1.5,0) -- (1.8,-0.5);
\filldraw (1.8,-0.7) node{\tiny{$7$}};
\end{tikzpicture}\] respectively. Notice that $R\in\pi^{-1}(U_2),\ S\in \pi^{-1}(U_1).$

The cycle $C = e_1+e_2$ and the tropical $1$-form $\eta=\frac{1}{2}\eta_C$ are as in Corollary~\ref{l:dualbasis}. Again, we pick reference points. Let $P_{v_1}$ and $P_{v_2}$ be points whose $x$-coordinates are $-1$ and $14$, respectively; hence $P_{v_i}\in \pi^{-1}(U_i)$. Let $P_{e_1}$ and $P_{e_2}$ denote the two different points whose $x$-coordinates are both $a$ where $a^2=7$. Notice that these points lie in the intersection $ \pi^{-1}(U_1)\cap \pi^{-1}(U_2)$. We assume that the point $P_{e_1}$ lies in the component corresponding to the edge $e_1$. 

The analytic open $\pi^{-1}(U_1)$ will be embedded into a good reduction elliptic curve. In fact, we have
\[\pi^{-1}(U_1)\simeq \{(x,\tilde{y})\mid\tilde{y}^2=(x-1)(x-2)(x-3), x\in U_1\}\] 
where 
\[\ \tilde{y}=\frac{y}{\ell(x)},\ \ell(x)=x\Big(1-\frac{7}{x}\Big)^{1/2}.\] 
In the new coordinates, the $1$-form ${\omega_i}_{\vert\pi^{-1}(U_1)}$ is given by \[\Big(1-\frac{7}{x}\Big)^{-1/2}x^{i-1}\frac{dx}{2\tilde{y}}.\]

For the other component, we have
\begin{align*}
\pi^{-1}(U_2) &\simeq \{(\tilde{x},\tilde{y})\mid\tilde{y}^2=\tilde{x}(\tilde{x}-1), \tilde{x}\in B(0,7)\} \\ 
&= \tilde{X}_{\tilde{x}(\tilde{x}-1)}^{\an}\setminus D,\ D=\{(\tilde{x},\tilde{y})\mid\tilde{y}^2=\tilde{x}(\tilde{x}-1), \tilde{x}\in \overline{B}(\infty,1/7)\}
\end{align*} where 
\[\tilde{x}=\frac{x}{7},\ \tilde{y}=\frac{y}{7\ell(\tilde{x})},\ \ell(\tilde{x})=\big((7\tilde{x}-1)(7\tilde{x}-2)(7\tilde{x}-3)\big)^{1/2}.\] 
Define
\begin{eqnarray*}
\P^1&\to& \tilde{X}_{\tilde{x}(\tilde{x}-1)}\\
T&\mapsto&\begin{cases}
            \hfil \infty^+    & \text{if } T=0,  \\
            \hfil \infty^-    & \text{if } T=\infty,  \\
            \Big(\frac{1}{2}\big(T+\frac{1}{4T}\big)+\frac{1}{2},\frac{1}{2}\big(T-\frac{1}{4T}\big)\Big)    &    \text{otherwise.}
           \end{cases}
\end{eqnarray*} 
This is a parametrization and induces an isomorphism \[\P^{1,\an}\setminus\big(\overline{B}(0,1/7)\cup\overline{B}(\infty,1/7)\big)\simeq \tilde{X}_{\tilde{x}(\tilde{x}-1)}^{\an}\setminus D.\] 
The annulus on the left is isomorphic to a standard annulus by \begin{eqnarray*}
A(1/7^2,1)&\xrightarrow{\sim}& \P^{1,\an}\setminus\big(\overline{B}(0,1/7)\cup\overline{B}(\infty,1/7)\big)\\
t&\mapsto& t/7
\end{eqnarray*} and, under these isomorphisms, the $1$-form ${\omega_i}_{\vert\pi^{-1}(U_2)}$ is can be expressed on $A(1/7^2,1)$ as
\[\Big(\frac{(t+7/2)^2}{2t}\Big)^i\bigg(\Big(\frac{(t+7/2)^2}{2t}-1\Big)\Big(\frac{(t+7/2)^2}{2t}-2\Big)\Big(\frac{(t+7/2)^2}{2t}-3\Big)\bigg)^{-1/2}\frac{dt}{2t}.\]

As in the previous example, take a path $\gamma$ from $S$ to $R$ such that $\tau(\gamma)=e_1$ and a take loop $\gamma_C$ whose homology class is $C$. Then our computations give
\begin{align*}
\BCint_\gamma \omega_0 &= 4 \cdot a^{6} + 2 \cdot a^{8} + 2 \cdot a^{10} + 5 \cdot a^{12} + O(a^{14}),\\
\BCint_\gamma \omega_1 &= 6 \cdot a^{2} + 6 \cdot a^{6} + 4 \cdot a^{10} + 6 \cdot a^{12} + O(a^{14}),\\
\BCint_{\gamma_C} \omega_0 &= a^{6} + 5 \cdot a^{8} + 4 \cdot a^{10} + 3 \cdot a^{12} + O(a^{14}),\\
\BCint_{\gamma_C} \omega_1 &= 5 \cdot a^{2} + a^{4} + 5 \cdot a^{6} + a^{8} + a^{10} + 6 \cdot a^{12} + O(a^{14}),
\end{align*} 
\[\tint_{e_1}\eta = \frac{1}{2}.\] 
Combining these, we get 
\[\Abint_S^R \omega_i = \BCint_\gamma \omega_i - \left(\BCint_{\gamma_C}\omega_i\right) \left(\tint_{e_1} \eta\right) = O(a^{14}),\ i=0,1\] 
from which our aim follows as every holomorphic $1$-form is a linear combination of $\omega_0$ and $\omega_1$.
\end{eg}

\begin{eg}(Genus~3)
Let $X/\Q_{13}$ be the genus $3$ curve given by 
\[y^2=f(x)=x(x-13)(x-169)(x-1)(x-14)(x-27)(x-4).\] 
The set $\{U_1,U_2,U_3,U_4\}$ is a good semistable covering of $\P^{1,\an}$ with respect to $S_f=\{0,13,169,1,14,27,4,\infty\}$ where 
\begin{align*}
U_1 &= \P^{1,\an}\setminus\big(\overline{B}(1,1/13)\cup \overline{B}(0,1/13)\big), \\
U_2 &= B(1,1), \\
U_3 &= A(1/169,1), \ U_4=B(0,1/13),
\end{align*} and we have the dual graphs $\Gamma$ and $T$
\[\begin{tikzpicture}
\draw [thick] (-6.5,0) -- (-5.5,0);
\draw [thick] (-5.5,0) -- (-4.5,0);
\draw [thick] (-3.75,0) ellipse (0.75cm and 0.4cm);
\filldraw (-4.5,0) circle (2pt);
\filldraw (-5.5,0) circle (2pt);
\filldraw (-6.5,0) circle (2pt);
\filldraw (-3,0) circle (2pt);
\filldraw (-6,0) node{>};
\filldraw (-5,0) node{>};
\filldraw (-3.75,0.4) node{>};
\filldraw (-3.75,-0.4) node{<};
\filldraw (-6,0.3) node{$e_1$};
\filldraw (-5,0.3) node{$e_2$};
\filldraw (-3.75,0.7) node{$e_3$};
\filldraw (-3.75,-0.7) node{$e_4$};
\filldraw (-6.5,-0.4) node{$v_2$};
\filldraw (-5.5,-0.4) node{$v_1$};
\filldraw (-4.6,-0.4) node{$v_3$};
\filldraw (-2.9,-0.4) node{$v_4$};

\qquad

\draw [thick] (-0.2,0) -- (3,0);
\filldraw (-0.2,0) circle (2pt);
\filldraw (1,0) circle (2pt);
\filldraw (2,0) circle (2pt);
\filldraw (3,0) circle (2pt);
\filldraw (-0.2,0.4) node{$U_2$};
\filldraw (1,0.4) node{$U_1$};
\filldraw (2,0.4) node{$U_3$};
\filldraw (3,0.4) node{$U_4$};
\draw (-0.2,0) -- (-0.6,-0.5);
\filldraw (-0.65,-0.7) node{\tiny{$1$}};
\draw (-0.2,0) -- (-0.2,-0.5);
\filldraw (-0.2,-0.7) node{\tiny{$14$}};
\draw (-0.2,0) -- (0.2,-0.5);
\filldraw (0.25,-0.7) node{\tiny{$27$}};
\draw (1,0) -- (0.8,-0.5);
\filldraw (0.75,-0.7) node{\tiny{$4$}};
\draw (1,0) -- (1.2,-0.5);
\filldraw (1.25,-0.7) node{\tiny{$\infty$}};
\draw (2,0) -- (2,-0.5);
\filldraw (2,-0.7) node{\tiny{$13$}};
\draw (3,0) -- (2.8,-0.5);
\filldraw (2.75,-0.7) node{\tiny{$0$}};
\draw (3,0) -- (3.2,-0.5);
\filldraw (3.25,-0.7) node{\tiny{$169$}};
\end{tikzpicture}\] respectively.

The cycle $C = e_3+e_4$ and the tropical $1$-form $\eta=\frac{1}{2}\eta_C$ are as in Corollary~\ref{l:dualbasis}. Let $P_{v_1},P_{v_2},P_{v_3},P_{v_4}$ be points whose $x$-coordinates are $2,20/7,-13/12,169/14$, respectively; hence $P_{v_i}\in \pi^{-1}(U_i)$. For an $a$ such that $a^4=13$, let $P_{e_1}$ and $P_{e_2}$ denote points whose $x$-coordinates are $a^2+1$ and $a^2$, respectively; and let $P_{e_3}$ and $P_{e_4}$ be the two different points whose $x$-coordinates are both $13a^2$. Notice that 
\[P_{e_1}\in \pi^{-1}(U_2)\cap \pi^{-1}(U_1),\ P_{e_2}\in \pi^{-1}(U_1)\cap \pi^{-1}(U_3)\] 
and that 
\[P_{e_3},P_{e_4}\in \pi^{-1}(U_3)\cap \pi^{-1}(U_4).\]
We assume that the point $P_{e_3}$ lies in the component corresponding to the edge $e_3$. 

The preimage $\pi^{-1}(U_1)$ will be embedded into a good reduction elliptic curve:
\[\pi^{-1}(U_1)\simeq \{(x,\tilde{y})\mid\tilde{y}^2=x(x-1)(x-4), x\in U_1\}\]
where
\[\tilde{y}=\frac{y}{\ell(x)},\ \ell(x) =x(x-1)\bigg(\Big(1-\frac{13}{x}\Big)\Big(1-\frac{169}{x}\Big)\Big(1-\frac{13}{x-1}\Big)\Big(1-\frac{26}{x-1}\Big)\bigg)^{1/2}.\] 
The $1$-form ${\omega_i}_{\vert\pi^{-1}(U_1)}$ is represented by 
\[\bigg(\Big(1-\frac{13}{x}\Big)\Big(1-\frac{169}{x}\Big)\Big(1-\frac{13}{x-1}\Big)\Big(1-\frac{26}{x-1}\Big)\bigg)^{-1/2}\frac{x^{i-1}}{x-1}\frac{dx}{2\tilde{y}}.\] 

Similarly, $\pi^{-1}(U_2)$ is also embedded into an elliptic curve:
\[\pi^{-1}(U_2) \simeq \{(\tilde{x},\tilde{y})\mid\tilde{y}^2=\tilde{x}(\tilde{x}-1)(\tilde{x}-2), \tilde{x}\in B(0,13)\}\]
where 
\[\textstyle \tilde{x}=\frac{x-1}{13},\ \tilde{y}=\frac{y}{13\sqrt{13}\cdot\ell(\tilde{x})},\ \ell(\tilde{x})=\big((13\tilde{x}+1)(13\tilde{x}-12)(13\tilde{x}-168)(13\tilde{x}-3)\big)^{1/2}.\] 
The $1$-form ${\omega_i}_{\vert\pi^{-1}(U_2)}$ becomes
\[\frac{1}{\sqrt{13}}(13\tilde{x}+1)^i\big((13\tilde{x}+1)(13\tilde{x}-12)(13\tilde{x}-168)(13\tilde{x}-3)\big)^{-1/2}\frac{d\tilde{x}}{2\tilde{y}}.\]

Now, $\pi^{-1}(U_3)$ will be embedded into a rational curve:
\[\pi^{-1}(U_3)\simeq \{(x,\tilde{y})\mid\tilde{y}^2=x-13, x\in U_3\}\] 
where 
\[\tilde{y}=\frac{y}{\ell(x)},\ \ell(x) =x\Big(1-\frac{169}{x}\Big)^{1/2}\big((x-1)(x-14)(x-27)(x-4)\big)^{1/2}.\]
Under this isomorphism,  the $1$-form ${\omega_i}_{\vert\pi^{-1}(U_3)}$ is represented by
\[\Big(1-\frac{169}{x}\Big)^{-1/2}\big((x-1)(x-14)(x-27)(x-4)\big)^{-1/2}x^{i-1}\frac{dx}{2\tilde{y}}.\]

The analytic open $\pi^{-1}(U_4)$ will also be embedded into a rational curve:
\[\pi^{-1}(U_4) \simeq \{(\tilde{x},\tilde{y})\mid\tilde{y}^2=\tilde{x}(\tilde{x}-1), \tilde{x}\in B(0,13)\}\]
where 
\[\textstyle \tilde{x}=\frac{x}{169},\ \tilde{y}=\frac{y}{169\cdot\ell(\tilde{x})},\ \ell(\tilde{x})=\big((169\tilde{x}-13)(169\tilde{x}-1)(169\tilde{x}-14)(169\tilde{x}-27)(169\tilde{x}-4)\big)^{1/2}.\] In the new coordinates,  the $1$-form ${\omega_i}_{\vert\pi^{-1}(U_4)}$ becomes \[\big((169\tilde{x}-13)(169\tilde{x}-1)(169\tilde{x}-14)(169\tilde{x}-27)(169\tilde{x}-4)\big)^{-1/2}(169\tilde{x})^i\frac{d\tilde{x}}{2\tilde{y}}.\]

We start with verifying (up to a certain precision) that the abelian integral of $\omega_0$ vanishes between the Weierstrass points $R=(13,0), S=(1,0)$. Note that $R\in\pi^{-1}(U_3)$ and that $S\in \pi^{-1}(U_2)$. Consider the concatenation $\gamma=\gamma_1\gamma_2\gamma_3$ where $\gamma_1$ is a path from $S$ to $P_{e_1}$ in $\pi^{-1}(U_2)$, $\gamma_2$ is a path from $P_{e_1}$ to $P_{e_2}$ in $\pi^{-1}(U_1)$ and $\gamma_3$ is a path from $P_{e_2}$ to $R$ in $\pi^{-1}(U_3)$; hence $\tau(\gamma)=e_1e_2$. Since the tropical integral of $\eta$ along $e_1e_2$ is $0$, we have the equality
\[\Abint_S^R\omega_0 = \BCint_\gamma \omega_0 =  \BCint_S^{P_{e_1}} \omega_0+ \BCint_{P_{e_1}}^{P_{e_2}} \omega_0+\BCint_{P_{e_2}}^{R} \omega_0.\] 
Our methods yield
\begin{align*}
\BCint_S^{P_{e_1}} \omega_0 &= 2 \cdot a^{-1} + 8 \cdot a + 6 \cdot a^{3} + 9 \cdot a^{5} + 8 \cdot a^{7} + 3 \cdot a^{9} + 5 \cdot a^{11} + O(a^{13}),\\
\BCint_{P_{e_1}}^{P_{e_2}} \omega_0 &= 4 \cdot a^{-1} + 6 \cdot a + 3 \cdot a^{3} + 10 \cdot a^{5} + 8 \cdot a^{7} + 9 \cdot a^{9} + 11 \cdot a^{11} +  O(a^{13}),\\
\BCint_{P_{e_2}}^{R} \omega_0 &= 7 \cdot a^{-1} + 12 \cdot a + 3 \cdot a^{3} + 5 \cdot a^{5} + 9 \cdot a^{7} + 12 \cdot a^{9} + 8 \cdot a^{11} + O(a^{13}),
\end{align*} from which we get 
\[\Abint_S^R\omega_0 = O(a^{13})\] 
as required.

To demonstrate our methods, we compute the abelian integral of $\omega = \omega_1 + \omega_2$ between the following two  points lying in different basic wide opens:
\begin{align*}
R &= (13^3,2 \cdot 13^{3} + 2 \cdot 13^{4} + 10 \cdot 13^{5} + 11 \cdot 13^{6} + O(13^{7}))\in\pi^{-1}(U_4), \\
S &= (7,4 + 7 \cdot 13^{2} + 12 \cdot 13^{4} + 6 \cdot 13^{5} + O(13^{7}))\in\pi^{-1}(U_1) .
\end{align*}
Set $\gamma=\gamma_1\gamma_2\gamma_3$ where $\gamma_1$ is a path from $S$ to $P_{e_2}$ in $\pi^{-1}(U_1)$, $\gamma_2$ is a path from $P_{e_2}$ to $P_{e_3}$ in $\pi^{-1}(U_3)$ and $\gamma_3$ is a path from $P_{e_3}$ to $R$ in $\pi^{-1}(U_4)$; thus $\tau(\gamma)=e_2e_3$. For this path, we have
\begin{align*}
\BCint_\gamma \omega = &\ 11 + 4 \cdot a^{2} + 2 \cdot a^{4} + 10 \cdot a^{6} + 6 \cdot a^{8} + 7 \cdot a^{10} + 8 \cdot a^{12} + 9 \cdot a^{14}\\
&+ 9 \cdot a^{16} + 11 \cdot a^{18} + 9 \cdot a^{20} + 6 \cdot a^{22} + 4 \cdot a^{24} + 10 \cdot a^{26} + O(a^{28}),
\end{align*}
\[\tint_{\tau(\gamma)}\eta = \frac{1}{2}.\] 
Consider the loop $\gamma_C=\gamma_1\gamma_2\gamma_3\gamma_4$ in $X^{\an}$ where $\gamma_1$ is a path from $P_{v_3}$ to $P_{e_3}$ in $\pi^{-1}(U_3)$, $\gamma_2$ is a path from $P_{e_3}$ to $P_{v_4}$ in $\pi^{-1}(U_4)$, $\gamma_3$ is a path from $P_{v_4}$ to $P_{e_4}$ in $\pi^{-1}(U_4)$ and $\gamma_4$ is a path from $P_{e_4}$ to $P_{v_3}$ in $\pi^{-1}(U_3)$. The homology class of this loop is $C$ and the period integral is
\[\BCint_{\gamma_C} \omega = 8 \cdot a^{2} + 7 \cdot a^{6} + 2 \cdot a^{10} + 6 \cdot a^{14} + 10 \cdot a^{18} + 8 \cdot a^{26} + O(a^{28}).\] 
Consequently, we have 
\begin{eqnarray*}
\Abint_S^R \omega &=& \BCint_\gamma \omega - \left(\BCint_{\gamma_C}\omega\right) \left(\tint_{\tau(\gamma)} \eta\right)\\ &=& 11 + 2 \cdot a^4 + 6 \cdot a^8 + 8 \cdot a^{12} + 9 \cdot a^{16} + 9 \cdot a^{20} + 4 \cdot a^{24} + O(a^{28})\\ &=& 11 + 2 \cdot 13 + 6 \cdot 13^2 + 8 \cdot 13^3 + 9 \cdot 13^4 + 9 \cdot 13^5 + 4 \cdot 13^6 + O(13^7).
\end{eqnarray*}
\end{eg}

\begin{eg}(Chabauty--Coleman method)
Consider the even degree hyperelliptic curve $X/\Q$ \cite[\href{https://www.lmfdb.org/Genus2Curve/Q/3200/f/819200/1}{3200.f.819200.1}]{lmfdb} defined by the equation
\[y^2 = f(x) = (x^2-2)(x^2-x-1)(x^2+x-1).\] 
According to the database, this curve has exactly six rational points. In this final example, we will identify the annihilating differential to be used in the Chabauty--Coleman method at a prime of bad reduction. See the survey \cite{mccallum2012method} (especially Appendix A) for a detailed account of the method with many references.

The curve $X$ has bad reduction at the prime $5$ and its minimal regular model $\mathscr X$ over $\Z_5$ is given by the same equation as the above Weierstrass model. The Chabauty--Coleman bound \cite[Corollary~1.11]{lorenzin_tucker02:chabauty_coleman} (see also \cite[Theorem~1.4]{katz_dzb13:chabauty_coleman} for a refinement) gives
\[\# X(\Q)\leq \#\mathscr X_{\F_5}^{\operatorname{sm}}(\F_5)+2=8\]
where $\mathscr X_{\F_5}^{\operatorname{sm}}$ denotes the smooth locus of the special fiber of $\mathscr X$. A point count in Magma \cite{MR1484478} reveals the set of all rational points of naive height bounded by $10^5$:
\begin{eqnarray}
\label{Chabautyexample}
\{\infty^+,\infty^-,(1,\pm 1),(-1,\pm 1)\}\subseteq X(\Q).
\end{eqnarray}
Another computation in Magma \cite{MR1484478} shows that the Mordell-Weil rank of the Jacobian of $X$ is equal to $1$. Therefore, in order to check whether or not the curve $X$ has more rational points, one can use \cite[Theorem A.5.(1)]{mccallum2012method}. The crucial step is to construct the unique, up to a scalar multiple, annihilating differential on $X$. The fact that the known rational points are all in different residue discs makes it necessary to compute non-tiny integrals; this can be achieved by using of our techniques. 

The set $\{U_1,U_2,U_3\}$ is a good semistable covering of $\P^{1,\an}$ with respect to the set $S_f=\{\pm\sqrt{2}, \frac{1}{2}(1\pm\sqrt{5}), \frac{1}{2}(-1\pm\sqrt{5})\}$, where 
\begin{align*}
U_1 &= \P^{1,\an}\setminus\big(\overline{B}(1/2,1/\sqrt{5})\cup \overline{B}(-1/2,1/\sqrt{5})\big), \\
U_2 &= B(1/2,1), \\
U_3 &= B(-1/2,1),
\end{align*} with dual graph $T$
\[\begin{tikzpicture}
\draw [thick] (0,0) -- (6,0);
\filldraw (0,0) circle (2.5pt);
\filldraw (0,0.4) node{$U_2$};
\filldraw (3,0) circle (2.5pt);
\filldraw (3,0.4) node{$U_1$};
\filldraw (6,0) circle (2.5pt);
\filldraw (6,0.4) node{$U_3$};
\draw (0,0) -- (-0.7,-0.7);
\filldraw (-0.9,-0.9) node{\tiny{$\frac{1}{2}(1+\sqrt{5})$}};
\draw (0,0) -- (0.7,-0.7);
\filldraw (0.9,-0.9) node{\tiny{$\frac{1}{2}(1-\sqrt{5})$}};
\draw (3,0) -- (2.5,-0.7);
\filldraw (2.4,-0.9) node{\tiny{$\sqrt{2}$}};
\draw (3,0) -- (3.5,-0.7);
\filldraw (3.4,-0.9) node{\tiny{$-\sqrt{2}$}};
\draw (6,0) -- (5.3,-0.7);
\filldraw (5.1,-0.9) node{\tiny{$\frac{1}{2}(-1+\sqrt{5})$}};
\draw (6,0) -- (6.7,-0.7);
\filldraw (6.9,-0.9) node{\tiny{$\frac{1}{2}(-1-\sqrt{5})$}};
\end{tikzpicture}\]
Consider the points $R=(1,-1)$ and $S=(1,1)$, both belong to the space $\pi^{-1}(U_1)$. Therefore, for every holomorphic $1$-form $\omega$ on $X$, we have the equality
\[\Abint_S^R \omega = \BCint_S^R \omega.\]

The basic wide open $\pi^{-1}(U_1)$ is embedded into a rational curve:
\[\pi^{-1}(U_1)\simeq \{(x,\tilde{y})\mid\tilde{y}^2=x^2-2, x\in U_1\}\] 
where 
\[\tilde{y}=\frac{y}{\ell(x)},\ \ell(x) =\Big(x-\frac{1}{2}\Big)\Big(x+\frac{1}{2}\Big)\bigg(\Big(1-\frac{5/4}{(x-1/2)^2}\Big)\Big(1-\frac{5/4}{(x+1/2)^2}\Big)\bigg)^{1/2}.\]
Under this isomorphism,  the $1$-form ${\omega_i}_{\vert\pi^{-1}(U_1)}$ is represented by 
\[\bigg(\Big(1-\frac{5/4}{(x-1/2)^2}\Big)\Big(1-\frac{5/4}{(x+1/2)^2}\Big)\bigg)^{-1/2}\frac{x^i}{x^2-1/4}\frac{dx}{2\tilde{y}}.\]
Our methods yield
\begin{align*}
a\coloneq\BCint_S^R \omega_0 &= 2 \cdot 5 + 5^{4} + 3 \cdot 5^{6} + 2 \cdot 5^{7} + 2 \cdot 5^{8} + 4 \cdot 5^{9} + O(5^{10}),\\
b\coloneq\BCint_S^R \omega_1 &= O(5^{10}),
\end{align*} which give the annihilating differential as
\[\omega \coloneq b\omega_0-a\omega_1.\] 
It can be shown that the inclusion in \eqref{Chabautyexample} is actually an equality using the annihilating differential $\omega$. In particular, we have
\[X(\Q)=\{\infty^+,\infty^-,(1,\pm 1),(-1,\pm 1)\}.\]
\end{eg}

\begin{rem} We end this example with a remark about the importance of Chabauty--Coleman method at a prime of bad reduction. An illustration \cite[Example 5.1]{katz_dzb13:chabauty_coleman} was provided by the first-named author with Zureick-Brown. In this example, for a certain curve $X/\Q$ that has bad reduction at $5$, it is shown that $5$ is the only prime at which the refined Chabauty--Coleman bound \cite[Theorem 1.4]{katz_dzb13:chabauty_coleman} is sharp. Hence, one cannot determine the set $X(\Q)$ by using the Chabauty--Coleman bound at primes of good reduction alone; it is necessary to work with a prime of bad reduction or to make use of other techniques such as the Mordell--Weil sieve \cite{bruin_stoll_2010}.
\end{rem}

\bibliographystyle{amsalpha}
\bibliography{master}

\providecommand{\noopsort}[1]{}\def\cprime{$'$}
\providecommand{\bysame}{\leavevmode\hbox to3em{\hrulefill}\thinspace}
\providecommand{\MR}{\relax\ifhmode\unskip\space\fi MR }
% \MRhref is called by the amsart/book/proc definition of \MR.
\providecommand{\MRhref}[2]{%
  \href{http://www.ams.org/mathscinet-getitem?mr=#1}{#2}
}
\providecommand{\href}[2]{#2}
\begin{thebibliography}{{LMF}20}

\bibitem[Bal15]{balakrishnan2015coleman}
Jennifer~S. Balakrishnan, \emph{Coleman integration for even-degree models of
  hyperelliptic curves}, LMS J. Comput. Math. \textbf{18} (2015), no.~1,
  258--265. \MR{3349319}

\bibitem[BB12]{BBComputing}
Jennifer~S. Balakrishnan and Amnon Besser, \emph{Computing local {$p$}-adic
  height pairings on hyperelliptic curves}, Int. Math. Res. Not. IMRN (2012),
  no.~11, 2405--2444. \MR{2926986}

\bibitem[BBK10]{BBKExplicit}
Jennifer~S. Balakrishnan, Robert~W. Bradshaw, and Kiran~S. Kedlaya,
  \emph{Explicit {C}oleman integration for hyperelliptic curves}, Algorithmic
  number theory, Lecture Notes in Comput. Sci., vol. 6197, Springer, Berlin,
  2010, pp.~16--31. \MR{2721410}

\bibitem[BBM16]{balakrishnan2016quadratic}
Jennifer~S. Balakrishnan, Amnon Besser, and J.~Steffen M{\"u}ller,
  \emph{Quadratic {C}habauty: $p$-adic heights and integral points on
  hyperelliptic curves}, Journal f{\"u}r die reine und angewandte Mathematik
  (Crelles Journal) \textbf{2016} (2016), no.~720, 51--79.

\bibitem[BCP97]{MR1484478}
Wieb Bosma, John Cannon, and Catherine Playoust, \emph{The {M}agma algebra
  system. {I}. {T}he user language}, J. Symbolic Comput. \textbf{24} (1997),
  no.~3-4, 235--265, Computational algebra and number theory (London, 1993).
  \MR{MR1484478}

\bibitem[Ber90]{berkovic90:analytic_geometry}
Vladimir~G. Berkovich, \emph{Spectral theory and analytic geometry over
  non-{A}rchimedean fields}, Mathematical Surveys and Monographs, vol.~33,
  American Mathematical Society, Providence, RI, 1990. \MR{1070709}

\bibitem[Ber07]{berkovic07:integration}
\bysame, \emph{Integration of one-forms on {$p$}-adic analytic spaces}, Annals
  of Mathematics Studies, vol. 162, Princeton University Press, Princeton, NJ,
  2007. \MR{2263704 (2008a:14035)}

\bibitem[Bes00]{Besser2000}
Amnon Besser, \emph{Syntomic regulators and {$p$}-adic integration. {II}.
  {$K_2$} of curves}, Proceedings of the {C}onference on {$p$}-adic {A}spects
  of the {T}heory of {A}utomorphic {R}epresentations ({J}erusalem, 1998), vol.
  120, 2000, pp.~335--359. \MR{1809627}

\bibitem[Bes17]{BesserPairing}
\bysame, \emph{$p$-adic heights and {V}ologodsky integration}, 2017, preprint
  available at \url{http://www.arxiv.org/abs/1711.06957}.

\bibitem[Bes18]{BesserSyntReg}
\bysame, \emph{The syntomic regulator for ${K}_2$ of curves with arbitrary
  reduction}, 2018, to appear in the proceedings of Regulators IV.

\bibitem[Bes19]{best2019explicit}
Alex~J. Best, \emph{Explicit {C}oleman integration in larger characteristic},
  The Open Book Series \textbf{2} (2019), no.~1, 85--102.

\bibitem[BPR13]{baker_payne_rabinoff13:analytic_curves}
Matthew Baker, Sam Payne, and Joseph Rabinoff, \emph{On the structure of
  non-{A}rchimedean analytic curves}, Tropical and non-{A}rchimedean geometry,
  Contemp. Math., vol. 605, Amer. Math. Soc., Providence, RI, 2013,
  pp.~93--121. \MR{3204269}

\bibitem[BR15]{baker_rabinoff14:skeleton_jacobian}
Matthew Baker and Joseph Rabinoff, \emph{The skeleton of the {J}acobian, the
  {J}acobian of the skeleton, and lifting meromorphic functions from tropical
  to algebraic curves}, Int. Math. Res. Not. IMRN (2015), no.~16, 7436--7472.
  \MR{3428970}

\bibitem[BS10]{bruin_stoll_2010}
Nils Bruin and Michael Stoll, \emph{The {M}ordell--{W}eil sieve: proving
  non-existence of rational points on curves}, LMS Journal of Computation and
  Mathematics \textbf{13} (2010), 272–306.

\bibitem[BT20]{balakrishnan2020explicit}
Jennifer~S. Balakrishnan and Jan Tuitman, \emph{Explicit {C}oleman integration
  for curves}, Mathematics of Computation (2020).

\bibitem[BZ17]{BesserZerbes}
Amnon Besser and Sarah Zerbes, \emph{{V}ologodsky integration on curves with
  semi-stable reduction}, 2017, preprint available at
  \url{http://www.arxiv.org/abs/1711.06950}.

\bibitem[CdS88]{coleman_shalit88:p_adic_regulator}
Robert Coleman and Ehud de~Shalit, \emph{{$p$}-adic regulators on curves and
  special values of {$p$}-adic {$L$}-functions}, Invent. Math. \textbf{93}
  (1988), no.~2, 239--266. \MR{948100}

\bibitem[CG89]{coleman1989}
Robert~F. Coleman and Benedict~H. Gross, \emph{$p$-adic heights on curves},
  Algebraic Number Theory --- in honor of K. Iwasawa (Tokyo, Japan),
  Mathematical Society of Japan, 1989, pp.~73--81.

\bibitem[CM88]{CMStableReduction}
Robert Coleman and William McCallum, \emph{Stable reduction of {F}ermat curves
  and {J}acobi sum {H}ecke characters}, J. Reine Angew. Math. \textbf{385}
  (1988), 41--101. \MR{931215}

\bibitem[Col85a]{coleman85:effective_chabauty}
Robert~F. Coleman, \emph{Effective {C}habauty}, Duke Math. J. \textbf{52}
  (1985), no.~3, 765--770. \MR{808103}

\bibitem[Col85b]{coleman85:torsion}
\bysame, \emph{Torsion points on curves and {$p$}-adic abelian integrals}, Ann.
  of Math. (2) \textbf{121} (1985), no.~1, 111--168. \MR{782557}

\bibitem[Col89]{coleman89:reciprocity_laws}
\bysame, \emph{Reciprocity laws on curves}, Compositio Math. \textbf{72}
  (1989), no.~2, 205--235. \MR{1030142}

\bibitem[Col98]{colmez1998integration}
Pierre Colmez, \emph{Int{\'e}gration sur les vari{\'e}t{\'e}s p-adiques},
  Soci{\'e}t{\'e} math{\'e}matique de France, 1998.

\bibitem[Gaj]{gajovic}
Stevan Gajovic, \emph{Quadratic {C}habauty for integral points}, in
  preparation.

\bibitem[Har12]{harrison2012extension}
Michael~C. Harrison, \emph{An extension of {K}edlaya's algorithm for
  hyperelliptic curves}, J. Symbolic Comput. \textbf{47} (2012), no.~1,
  89--101. \MR{2854849}

\bibitem[Ked01]{kedlaya2001counting}
Kiran~S. Kedlaya, \emph{Counting points on hyperelliptic curves using
  {M}onsky--{W}ashnitzer cohomology}, J. Ramanujan Math. Soc. \textbf{16}
  (2001), no.~4, 323--338. \MR{1877805}

\bibitem[KRZB16]{krzb15:uniform_bounds}
Eric Katz, Joseph Rabinoff, and David Zureick-Brown, \emph{Uniform bounds for
  the number of rational points on curves of small {M}ordell-{W}eil rank}, Duke
  Math. J. \textbf{165} (2016), no.~16, 3189--3240. \MR{3566201}

\bibitem[KRZB18]{krzb:utahsurvey}
\bysame, \emph{Diophantine and tropical geometry, and uniformity of rational
  points on curves}, Algebraic geometry: {S}alt {L}ake {C}ity 2015, Proc.
  Sympos. Pure Math., vol.~97, Amer. Math. Soc., Providence, RI, 2018,
  pp.~231--279. \MR{3821174}

\bibitem[KZB13]{katz_dzb13:chabauty_coleman}
Eric Katz and David Zureick-Brown, \emph{The {C}habauty-{C}oleman bound at a
  prime of bad reduction and {C}lifford bounds for geometric rank functions},
  Compos. Math. \textbf{149} (2013), no.~11, 1818--1838. \MR{3133294}

\bibitem[{LMF}20]{lmfdb}
The {LMFDB Collaboration}, \emph{The {L}-functions and {M}odular {F}orms
  {D}atabase}, \url{http://www.lmfdb.org}, 2020, [Online; accessed 23 January
  2020].

\bibitem[LT02]{lorenzin_tucker02:chabauty_coleman}
Dino Lorenzini and Thomas~J. Tucker, \emph{Thue equations and the method of
  {C}habauty-{C}oleman}, Invent. Math. \textbf{148} (2002), no.~1, 47--77.
  \MR{1892843 (2003d:11088)}

\bibitem[MP12]{mccallum2012method}
William McCallum and Bjorn Poonen, \emph{The method of {C}habauty and
  {C}oleman}, Explicit methods in number theory, Panor. Synth\`eses, vol.~36,
  Soc. Math. France, Paris, 2012, pp.~99--117. \MR{3098132}

\bibitem[MZ08]{mikhalkin_zharkov08:tropical_curves}
Grigory Mikhalkin and Ilia Zharkov, \emph{Tropical curves, their {J}acobians
  and theta functions}, Curves and abelian varieties, Contemp. Math., vol. 465,
  Amer. Math. Soc., Providence, RI, 2008, pp.~203--230. \MR{2457739}

\bibitem[Sto19]{stoll2018uniform}
Michael Stoll, \emph{Uniform bounds for the number of rational points on
  hyperelliptic curves of small {M}ordell-{W}eil rank}, J. Eur. Math. Soc.
  (JEMS) \textbf{21} (2019), no.~3, 923--956. \MR{3908770}

\bibitem[{The}20]{sagemath}
{The Sage Developers}, \emph{{S}agemath, the {S}age {M}athematics {S}oftware
  {S}ystem ({V}ersion 8.3)}, 2020, {\tt https://www.sagemath.org}.

\bibitem[Tui16]{Tuitman:counting1}
Jan Tuitman, \emph{Counting points on curves using a map to {$\bold{P}^1$}},
  Math. Comp. \textbf{85} (2016), no.~298, 961--981. \MR{3434890}

\bibitem[Tui17]{Tuitman:counting2}
\bysame, \emph{Counting points on curves using a map to {$\bold{P}^1$}, {II}},
  Finite Fields Appl. \textbf{45} (2017), 301--322. \MR{3631366}

\bibitem[Zar96]{Zarhin1996}
Yuri~G. Zarhin, \emph{{$p$}-adic abelian integrals and commutative {L}ie
  groups}, J. Math. Sci. \textbf{81} (1996), no.~3, 2744--2750.

\end{thebibliography}

\vspace{.2in}

\end{document}